\documentclass[pdflatex,sn-mathphys-num]{sn-jnl}
\usepackage[utf8]{inputenc}
\usepackage[english]{babel}
\usepackage[symbol]{footmisc}
\usepackage{amssymb,amsmath,amsthm,amsfonts,xcolor,enumerate,hyperref,comment,longtable,cleveref}

\usepackage{times}

\usepackage{pdflscape}
\usepackage{ulem}
\usepackage[mathcal]{euscript}
\usepackage{tikz}
\usepackage{array}
\usepackage{hyperref}
\usepackage{cancel}
\usepackage{stmaryrd}

\usetikzlibrary{arrows}

\theoremstyle{plain}
\newtheorem{Th}{Theorem}

\newtheorem{defn}[Th]{Definition}

\usepackage{stmaryrd}
\usepackage{xcolor}

\begin{document}

\title[$n$-dimensional totally graded filiform Lie algebras]{Local and 2 local $\frac{1}{2}$-derivation of $n$-dimensional totally graded filiform Lie algebras}

\author*[1,2]{\fnm{Farkhodzhon} \sur{Arzikulov}}\email{arzikulovfn@gmail.com}

\author[1,3]{\fnm{Mirzobek} \sur{Shodiev}}\email{mirzobekshodiyev7@gmail.com}
\equalcont{These authors contributed equally to this work.}

\affil*[1]{\orgdiv{V.I. Romanovskiy Institute of Mathematics}, \orgname{Uzbekistan Academy of Sciences}, \orgaddress{\street{Olmazor district, University street 9}, \city{Tashkent}, \postcode{100174}, \country{Uzbekistan}}}

\affil[2]{\orgdiv{Mathematics}, \orgname{Andijan State University}, \orgaddress{\street{University 129}, \city{Andijan}, \postcode{170100}, \country{Uzbekistan}}}

\affil[3]{\orgdiv{Mathematics}, \orgname{Bukhara State University}, \orgaddress{\street{Mirzo Iqbol street 11}, \city{Bukhara}, \postcode{160107}, \country{Uzbekistan}}}

\abstract  {This article provides a complete algebraic description of $\frac{1}{2}$-derivations, local $\frac{1}{2}$-derivations, and 2-local $\frac{1}{2}$-derivations on $n$-dimensional totally graded complex filiform Lie algebras of maximum length. Based on the foundational classification framework established by Janez Bernik (2020), we systematically determine the vector spaces of  $\frac{1}{2}$-derivations for the six infinite structural sequences ($m_0(n)$, $m_2(n)$, $W^+(n)$, $m_{0,1}(n)$, $m_{0,2}(n)$, $m_{0,3}(n)$) and the five exceptional one-parameter families ($g_{7,\alpha}$ through $g_{11,\alpha}$). By analyzing the point-wise local evaluation equations via parametric matrix systems, we establish the structural linearity and rigidity of local $\frac{1}{2}$-derivations. In contrast, we demonstrate that the independent parameters residing in the boundary rows of the  $\frac{1}{2}$-derivation matrices provide sufficient degrees of freedom to bypass linearity constraints. Exploiting these boundary configurations, we explicitly construct pure non-linear and non-additive 2-local $\frac{1}{2}$-derivations leveraging the homogeneous function of degree one, $f(z_1, z_2) = z_1^3 / (z_1^2 + z_2^2)$, thereby defining the exact boundary where local rigidity fails.

}

\keywords{totally graded Lie algebra, filiform Lie algebra, matrix  form,  $\frac{1}{2}$- derivation, local $\frac{1}{2}$- derivation, 2-local $\frac{1}{2}$- derivation}

\pacs[MSC Classification]{17A30, 17B40, 17B61, 17B63}

\maketitle

\section{Introduction}\label{sec1}
The structural analysis of nilpotent Lie algebras is a central theme in modern non-associative algebra. Among these, filiform Lie algebras possess a maximal nilpotency index. When equipped with a connected integer grading where each homogeneous component is one-dimensional, they are called totally graded filiform Lie algebras of maximum length. Recently, Janez Bernik \cite{Bernik2020} refined the classification of these structures, partitioning them into six infinite sequences ($m_0(n)$, $m_2(n)$, $W^+(n)$, $m_{0,1}(n)$, $m_{0,2}(n)$, $m_{0,3}(n)$) and five exceptional one-parameter families ($g_{7,\alpha}$ through $g_{11,\alpha}$).

Concurrently, generalized and localized operator spaces on Lie algebras have attracted significant interest. Originally introduced by Filippov \cite{fil1}, a linear map $\Delta: \mathfrak{g} \to \mathfrak{g}$ is a $\frac{1}{2}$-derivation if it satisfies the identity:
\[
\Delta([x, y]) = \frac{1}{2}\bigl([\Delta(x), y] + [x, \Delta(y)]\bigr)
\]
for all $x, y \in \mathfrak{g}$. Interest in these maps intensified after Ferreira, Kaygorodov, and Lopatkin \cite{FKL} proved their duality with transposed Poisson algebras. Consequently, $\frac{1}{2}$-derivations have been classified across various infinite-dimensional and simple frameworks \cite{Bai, FKL}.

Parallel to linear derivations, localized algebraic operators have evolved into a prominent paradigm. Based on the foundational works of Kadison \cite{Kadison} and Larson-Sourour \cite{Larson}, a local derivation coincides with a  derivation at each single point. Extending this, Šemrl \cite{Semrl} introduced 2-local derivations, which match  derivations on any pair of elements. While local and 2-local standard derivations or automorphisms have been widely studied for nilpotent configurations \cite{Ayupov2016, Kobiljon}, including recent works on totally graded filiform structures \cite{2}, their behavior under the $\frac{1}{2}$-derivation constraint remains completely unexplored for maximal length filiform systems.

The primary objective of this paper is to bridge this gap by providing a comprehensive algebraic description of , local, and 2-local $\frac{1}{2}$-derivations for all eleven structures in Bernik's classification \cite{Bernik2020}. We explicitly solve the parametric matrix-vector coordination systems derived from point-wise mappings to determine where local linearity holds and how internal parameters affect these spaces.

The paper is organized as follows:
In Section 2, we recall the essential definitions and algebraic preliminaries for maximum length filiform Lie algebras.
In Section 3, we present the explicit determination and full classification proofs for the vector spaces of  $\frac{1}{2}$-derivations.
In Section 4, we study local $\frac{1}{2}$-derivations, establishing filtration protocols over coordinate subspaces to prove their strict containment within  structures.
Finally, Section 5 treats the 2-local $\frac{1}{2}$-derivation problem with full mathematical proofs. By exploiting independent parameters in the boundary rows of the  matrices, we construct non-linear, non-additive 2-local transformations using the homogeneous function:
\[
f(z_1, z_2) = \frac{z_1^3}{z_1^2 + z_2^2},
\]
defining the exact boundary where local linearity and rigidity fail.

\section{Preliminaries}\label{sec2}

In this section we give necessary definitions and preliminary results for Lie algebras.

\begin{defn}
A vector space $L$ over a field $\mathbb{F}$ equipped with a bilinear bracket $[\cdot,\cdot]: L\times L\to L$ is called a \textbf{Lie algebra} if for all $x,y,z\in L$ the following identities hold:
\begin{itemize}
\item \textbf{Skew-symmetry:} $[x,x]=0$ (equivalently $[x,y]=-[y,x]$),
\item \textbf{Jacobi identity:} $[x,[y,z]]+[y,[z,x]]+[z,[x,y]]=0$.
\end{itemize}
\end{defn}

For a Lie algebra $L$, the Jacobi identity can be written as
\[
[x,[y,z]] = [[x,y],z] + [y,[x,z]].
\]

The \textbf{lower central series} of a Lie algebra $L$ is defined recursively by
\[
L^1 = L,\qquad L^{k+1}=[L^k,L],\quad k\ge 1.
\]

\begin{defn}
A Lie algebra $L$ is called \textbf{nilpotent} if there exists $k\in\mathbb{N}$ such that $L^k=\{0\}$. The smallest such $k$ is called the \textbf{index of nilpotency} of $L$.
\end{defn}
For an $n$-dimensional nilpotent Lie algebra $L$, the nilpotency index (the smallest $k$ such that $L^k = \{0\}$) satisfies $k \le n$. The maximal index $n$ is attained precisely by the \emph{filiform} Lie algebras. Indeed, for the standard filiform algebra with basis $\{e_1,\dots,e_n\}$ and non-zero brackets $[e_1,e_i]=e_{i+1}$ ($2\le i\le n-1$), the lower central series is
\[
L^2 = \langle e_3,\dots,e_n\rangle,\; L^3 = \langle e_4,\dots,e_n\rangle,\; \dots,\; L^{n-1} = \langle e_n\rangle,\; L^n = 0,
\]
so the nilpotency index equals $n$. For the $n$-dimensional abelian Lie algebra we have $L^2=0$, i.e., index $2$. Consequently, the upper bound is $n$, not $n+1$. A nilpotent Lie algebra is called \emph{quasi-filiform} if its nilpotency index is $n-1$ (i.e., $L^{n-1}=0$ but $L^{n-2}\neq 0$).

\begin{defn}
An $n$-dimensional nilpotent Lie algebra is called \textbf{filiform} if its index of nilpotency is $n$ (i.e., $L^n=0$ but $L^{n-1}\neq0$). It is called \textbf{quasi-filiform} if its index of nilpotency is $n-1$ (i.e., $L^{n-1}=0$ but $L^{n-2}\neq0$).
\end{defn}

A Lie algebra $L$ is said to be $\mathbb{Z}$-graded if $L=\bigoplus_{i\in\mathbb{Z}} V_i$ with $[V_i,V_j]\subseteq V_{i+j}$ for all $i,j$, and only finitely many $V_i$ are non-zero. A gradation is called \textbf{connected} if $L=V_{k_1}\oplus\cdots\oplus V_{k_t}$ with $V_{k_i}\neq\{0\}$ for every $i$ and $k_1<k_2<\cdots<k_t$.

\begin{defn}\label{halfderiv}
  Let $(\mathfrak{L}, [-,-])$ be an algebra with a multiplication $[-,-],$ and $\varphi$ be
a linear map. Then $\varphi$ is a $\frac12$-\textbf{derivation} if it satisfies:
\begin{equation}\label{halfderiv1}
\varphi([x, y]) = \frac12([\varphi(x), y] + [x, \varphi(y)]).
\end{equation}
\end{defn}

Note that $\frac{1}{2}$-derivations are a particular case of $\delta$-derivations introduced by Filippov in \cite{fil1}
.  It is easy to see from Definition \ref{halfderiv} that $[\mathfrak{L},\mathfrak{L}]$ and $Ann(\mathfrak{L})$ are invariant under any $\frac 12$-derivation of $\mathfrak{L}$. 
\begin{defn}
Let $\mathfrak{g}$ be an $n$-dimensional nilpotent Lie algebra over a field $\mathbb{F}$.
The algebra $\mathfrak{g}$ is called a \textbf{totally graded filiform Lie algebra}
(or a filiform Lie algebra of maximum length) if it satisfies the following two conditions concurrently:
\begin{enumerate}
    \item \textbf{Filiform Property:} Its nilpotency class is maximal, meaning that for the lower central series defined by $C^1\mathfrak{g} = \mathfrak{g}$ and $C^{i+1}\mathfrak{g} = [\mathfrak{g}, C^i\mathfrak{g}]$ for $i \ge 1$, the dimensions satisfy:
    \[ \dim C^i\mathfrak{g} = n - i \quad \text{for } 2 \le i \le n \]
    Consequently, $C^{n-1}\mathfrak{g} \neq \{0\}$ and $C^n\mathfrak{g} = \{0\}$.

    \item \textbf{Total Grading Property:} It admits an $\mathbb{N}$-grading as a direct sum of 1-dimensional homogeneous vector subspaces:
    \[ \mathfrak{g} = \mathfrak{g}_1 \oplus \mathfrak{g}_2 \oplus \dots \oplus \mathfrak{g}_n \]
    such that $[\mathfrak{g}_i, \mathfrak{g}_j] \subseteq \mathfrak{g}_{i+j}$ for all $i, j \in \{1, \dots, n\}$, where $\dim \mathfrak{g}_i = 1$ for each $1 \le i \le n$, and $\mathfrak{g}_i = \{0\}$ for all $i > n$.
\end{enumerate}

\end{defn}

\begin{Th}[\cite{Bernik2020}]
Let \( g = \bigoplus_{i=1}^n g_i \) be a totally graded filiform Lie algebra. Then \( g \) is isomorphic to a Lie algebra from the following list:
\end{Th}

\begin{enumerate}
    \item Filiform Lie algebras belonging to the six sequences \( m_0(n) \), \( m_2(n) \), \( W^+(n) \), \( m_{0,1}(2k+1) \), \( m_{0,2}(2k+2) \), \( m_{0,3}(2k+3) \), defined by the homogeneous basis \( \{e_1, \ldots, e_n\} \) and commutating relations given in Table 1;
    \item Filiform Lie algebras of the 5 one-parameter families \( g_{n,\alpha} \) of dimensions \( n = 7, \ldots, 11 \) respectively, defined by their homogeneous bases and commutation relations given in Table 2.
\end{enumerate}

\begin{table}[h]
\centering
\caption{Six infinite sequences of totally graded filiform Lie algebras}
\label{tab:filiformn}
\begin{tabular}{|c|c|>{\raggedright\arraybackslash}p{10cm}|}
\hline
\textbf{Algebra} & \textbf{Dimension} & \textbf{Commutation relations} \\
\hline
\( m_0(n) \) & \( n \ge 3 \) & \([e_1, e_i] = e_{i+1},\ i = 2, \ldots, n-1\) \\
\hline
\( m_2(n) \) & \( n \ge 5 \) &
\([e_1, e_i] = e_{i+1},\ i = 2, \ldots, n-1\) \\
& & \([e_2, e_i] = e_{i+2},\ i = 3, \ldots, n-2\) \\
\hline
\( W^+(n) \) & \( n \ge 12 \) &
\([e_i, e_j] =
\begin{cases}
(j-i)e_{i+j}, & i+j \le n; \\
0, & i+j > n;
\end{cases} \) \\
\hline
\( m_{0,1}(2k+1) \) & \( n = 2k+1,\ k \ge 3 \) &
\([e_1, e_i] = e_{i+1},\ i = 2, \ldots, 2k\) \\
& & \([e_l, e_{2k-l+1}] = (-1)^{l+1}e_{2k+1},\ l = 2, \ldots, k\) \\
\hline
\( m_{0,2}(2k+2) \) & \( n = 2k+2,\ k \ge 3 \) &
\([e_1, e_i] = e_{i+1},\ i = 2, \ldots, 2k+1\) \\
& & \([e_l, e_{2k-l+1}] = (-1)^{l+1}e_{2k+1},\ l = 2, \ldots, k\) \\
& & \([e_j, e_{2k-j+2}] = (-1)^{j+1}(k-j+1)e_{2k+2},\ j = 2, \ldots, k\) \\
\hline
\( m_{0,3}(2k+3) \) & \( n  = 2k+3,\ k \ge 3 \) &
\([e_1, e_i] = e_{i+1},\ i = 2, \ldots, 2k+2\) \\
& & \([e_l, e_{2k-l+1}] = (-1)^{l+1}e_{2k+1},\ l = 2, \ldots, k\) \\
& & \([e_j, e_{2k-j+2}] = (-1)^{j+1}(k-j+1)e_{2k+2},\ j = 2, \ldots, k\) \\
& & \([e_m, e_{2k-m+3}] = (-1)^m\left((m-2)k - \frac{(m-2)(m-1)}{2}\right)e_{2k+3},\ m = 3, \ldots, k+1\) \\
\hline
\end{tabular}
\end{table}

\begin{table}[htbp]
\centering
\caption{Five one-parameter families of totally graded filiform Lie algebras.}
\label{tab:filiform}
\begin{tabular}{|c|>{\raggedright\arraybackslash}p{0.7\linewidth}|}
\hline
\textbf{Algebra} & \textbf{Commutation relations} \\
\hline
$\mathfrak{g}_{7,\alpha}$ &
$\begin{array}{@{}l@{}}
[e_1, e_j] = e_{j+1},\quad 2 \leq j \leq 6
[e_2, e_3] = (2 + \alpha)e_5, \\[2pt] [e_2, e_4] = (2 + \alpha)e_6, \quad
[e_2, e_5] = (1 + \alpha)e_7,\quad [e_3, e_4] = e_7
\end{array}$ \\
\hline
$\mathfrak{g}_{8,\alpha}$ &
$\begin{array}{@{}l@{}}
\text{relations of } \mathfrak{g}_{7,\alpha} \text{ and} \quad
[e_1, e_7] = e_8,\quad [e_2, e_6] = \alpha e_8,\quad [e_3, e_5] = e_8,
\end{array}$ \\
\hline
$\mathfrak{g}_{9,\alpha}$ &
$\begin{array}{@{}l@{}}
\text{relations of } \mathfrak{g}_{8,\alpha} \text{ and} \quad
(\alpha \neq -\tfrac{5}{2}) \quad
[e_1, e_8] = e_9,\\[4pt] [e_2, e_7] = \dfrac{2\alpha^2 + 3\alpha - 2}{2\alpha + 5} e_9, \quad
[e_3, e_6] = \dfrac{2\alpha + 2}{2\alpha + 5} e_9,\quad [e_4, e_5] = \dfrac{3}{2\alpha + 5} e_9,
\end{array}$ \\
\hline
$\mathfrak{g}_{10,\alpha}$ &
$\begin{array}{@{}l@{}}
\text{relations of } \mathfrak{g}_{9,\alpha} \text{ and} \quad
(\alpha \neq -\tfrac{5}{2})
[e_1, e_9] = e_{10}, \\[2pt] [e_2, e_8] = \dfrac{2\alpha^2 + \alpha - 1}{2\alpha + 5} e_{10}, \quad
[e_3, e_7] = \dfrac{2\alpha - 1}{2\alpha + 5} e_{10},\quad [e_4, e_6] = \dfrac{3}{2\alpha + 5} e_{10},
\end{array}$ \\
\hline
$\mathfrak{g}_{11,\alpha}$ &
$\begin{array}{@{}l@{}}
\text{relations of } \mathfrak{g}_{10,\alpha} \text{ and} \\
(\alpha \neq -\tfrac{5}{2},\,-1,\,-3) \\[2pt]
[e_1, e_{10}] = e_{11},\quad [e_2, e_9] = \dfrac{2\alpha^3 + 2\alpha^2 + 3}{2(\alpha^2 + 4\alpha + 3)} e_{11}, \\[6pt]
[e_3, e_8] = \dfrac{4\alpha^3 + 8\alpha^2 - 8\alpha - 21}{2(\alpha^2 + 4\alpha + 3)(2\alpha + 5)} e_{11}, \\[6pt]
[e_4, e_7] = \dfrac{3(2\alpha^2 + 4\alpha + 5)}{2(\alpha^2 + 4\alpha + 3)(2\alpha + 5)} e_{11}, \\[6pt]
[e_5, e_6] = \dfrac{3(4\alpha + 1)}{2(\alpha^2 + 4\alpha + 3)(2\alpha + 5)} e_{11}
\end{array}$ \\
\hline
\end{tabular}
\end{table}

\newpage

\section{\texorpdfstring{$\frac{1}{2}$-derivation of a totally graded filiform Lie algebras}{derivations of a filiform Lie algebra}.}\label{sec3}

\subsection{\texorpdfstring{$\frac{1}{2}$-derivation of the algebra $m_0(n)$}{1/2 derivations of the algebra m0(n)}.}\label{subsec2}

In this section, we investigate and compute $\frac{1}{2}$-derivation of quasi-filiform Lie algebras, starting with the following theorem.

\begin{Th}\label{THM: halfder m0n} Any $\frac{1}{2}$-derivation of the algebra $m_0(n)$($n\ge 3$)
has the form

\begin{equation}\label{halfder: m0n}
\begin{cases}
\varphi(e_1)=\sum\limits_{i=1}^{n}\alpha_ie_i, \\ \varphi(e_2)=\sum\limits_{i=2}^{n}\beta_ie_i,, \\  
\varphi(e_k)=\frac{2^{k-2}-1}{2^{k-2}}\alpha_1 e_k+\frac{1}{2^{k-2}}\sum\limits_{i=2}^{n-k+2}\beta_{i}e_{i+k-2}, \  3\leq k\leq n-1,\\
\varphi(e_n)=\frac{(2^{n-2}-1)\alpha_1+\beta_2}{2^{n -2}} e_n. 
\end{cases}
\end{equation}

\end{Th}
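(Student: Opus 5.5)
The plan is to exploit the fact that $m_0(n)$ is generated by $e_1$ and $e_2$, since $e_{k+1}=[e_1,e_k]$ for $2\le k\le n-1$. Any $\frac12$-derivation $\varphi$ is therefore determined by $\varphi(e_1)$ and $\varphi(e_2)$ through the defining identity (\ref{halfderiv1}). I would first fix $\varphi(e_1)=\sum_{i=1}^n\alpha_ie_i$ and $\varphi(e_2)=\sum_{i=1}^n\beta_ie_i$, with $\beta_1$ not yet excluded. Then I would compute $\varphi(e_k)$ for $k\ge 3$ recursively and impose the remaining relations $[e_i,e_j]=0$ for $i,j\ge 2$ and $[e_1,e_n]=0$.

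\textbf{Recursion.} Apply (\ref{halfderiv1}) to $x=e_1$, $y=e_k$. In $m_0(n)$ the only nonzero brackets with $e_k$ ($k\ge2$) are those involving $e_1$, so $[\varphi(e_1),e_k]=\alpha_1e_{k+1}$, with the convention $e_{n+1}=0$. Hence
\[
\varphi(e_{k+1})=\tfrac12\alpha_1e_{k+1}+\tfrac12[e_1,\varphi(e_k)],\qquad 2\le k\le n-1 .
\]
By induction on $k$ I would then show three things:
\begin{enumerate}
\item $\varphi(e_k)$ has no $e_1$-component for $k\ge3$, because $[e_1,\cdot]$ never produces $e_1$;
\item the non-diagonal part of $\varphi(e_k)$ is $2^{-(k-2)}\sum_{i\ge3}\beta_ie_{i+k-2}$, i.e.\ the shift of $\varphi(e_2)$ by $k-2$ places, with terms beyond $e_n$ dropping out, which yields the upper limit $n-k+2$;
\item the diagonal coefficient satisfies $c_{k+1}=\frac12\alpha_1+\frac12c_k$ with $c_2=\beta_2$.
\end{enumerate}
The third item solves to $c_k=\frac{2^{k-2}-1}{2^{k-2}}\alpha_1+\frac{\beta_2}{2^{k-2}}$, and the $\beta_2$ term is exactly the $i=2$ summand in the stated formula. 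For $k=n$ all shifted terms vanish, leaving the last line of (\ref{halfder: m0n}).

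\textbf{Remaining constraints.} Next I would check the relations that do not come from the recursion. For $2\le i,j$, the identity $0=\varphi([e_i,e_j])$ gives
\[
0=[\varphi(e_i),e_j]+[e_i,\varphi(e_j)]=\gamma_i\,e_{j+1}-\gamma_j\,e_{i+1},
\]
where $\gamma_m$ denotes the $e_1$-coefficient of $\varphi(e_m)$. Since $\gamma_m=0$ for $m\ge3$, taking $i=2$, $j=3$ yields $\beta_1e_4=0$, so $\beta_1=0$ whenever $n\ge4$. For the pair $(e_1,e_n)$, we have $[\varphi(e_1),e_n]=0$ and $[e_1,\varphi(e_n)]=c_n[e_1,e_n]=0$, so this relation imposes nothing new. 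Finally, I would verify the converse: the maps defined by (\ref{halfder: m0n}) satisfy (\ref{halfderiv1}) on all basis pairs. This is the same computation read backwards, and shows that $\alpha_1,\dots,\alpha_n,\beta_2,\dots,\beta_n$ are free parameters.

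\textbf{Main obstacle.} The main difficulty is bookkeeping rather than conceptual. The delicate points are tracking the index truncation at $e_n$ in the shifted sums and justifying $\beta_1=0$. That argument uses $e_4$ and so needs $n\ge4$. For $n=3$ the relation $[e_2,e_3]=0$ imposes no condition, so I expect that case to be checked separately, and I would flag whether an $e_1$-component of $\varphi(e_2)$ genuinely survives there.
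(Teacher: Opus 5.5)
Your proposal is correct and follows essentially the paper's route. Like the paper, you fix $\varphi(e_1),\varphi(e_2)$, propagate along $e_{k+1}=[e_1,e_k]$ by induction, and use $[e_2,e_3]=0$ to force $\beta_1=0$. You are also more thorough than the paper, which never checks the remaining relations $[e_i,e_j]=0$ and $[e_1,e_n]=0$, nor the converse.

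Your suspicion about $n=3$ is justified. In $m_0(3)$ the map with $\varphi(e_1)=\varphi(e_3)=0$ and $\varphi(e_2)=e_1$ satisfies the $\frac12$-derivation identity on all basis pairs. So the statement as written holds only for $n\ge 4$, and the paper's argument, which silently uses $e_4$, has the same restriction.
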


\begin{proof}
We exploit it to determine the $\frac{1}{2}$-derivation.
$$\varphi(e_1)=\sum\limits_{i=1}^{n}\alpha_ie_i,\quad \varphi(e_2)=\sum\limits_{i=1}^{n}\beta_ie_i.$$

Now consider the condition of $\frac 1 2$-derivation for the elements $e_1$ and $e_2:$
\begin{center}
$\varphi(e_3)=\varphi([e_1,e_2])=\frac{1}{2}([\varphi(e_1),e_2]+[e_1,\varphi(e_2)])=\frac{1}{2}([\sum\limits_{i=1}^{n}\alpha_ie_i,e_2]+[e_1,\sum\limits_{i=1}^{n}\beta_ie_i])
=\frac{1}{2}(\alpha_1 e_3+\sum\limits_{i=2}^{n-1}\beta_{i}e_{i+1}).$
\end{center}
According to the structure of \(m_0(n)\), we have \([e_2, e_3] = 0\). Applying the definition of a \(\frac{1}{2}\)-derivation yields \(\varphi ([e_{2},e_{3}])=\frac{1}{2}([\varphi (e_{2}),e_{3}]+[e_{2},\varphi (e_{3})])=\frac{1}{2}[\varphi (e_{2}),e_{3}]=b_{1}e_{4}=0,\) which directly implies \(b_1=0\). Based on this initial relation, we proceed by induction to evaluate the remaining cases

We prove the following equality for $3\leq k\leq n-1$ by induction:
\begin{center}
$\varphi(e_k)=\frac{2^{k-2}-1}{2^{k-2}}\alpha_1 e_k+\frac{1}{2^{k-2}}\sum\limits_{i=2}^{n-k+2}\beta_{i}e_{i+k-2}$
\end{center}

The base case \(k=3\) is immediately satisfied by the relation above. Now, assuming the hypothesis holds for \(k\), we extend the argument to \(k+1\). By evaluating the \(\frac{1}{2}\)-derivation property on the elements \(e_{1}\) and \(e_{k}\), we obtain

\begin{center}
$\varphi(e_{k+1})=\varphi([e_1,e_k])=\frac{1}{2}([\varphi(e_1),e_k]+[e_1,\varphi(e_k)])=\frac{1}{2}\Big([\sum\limits_{i=1}^{n}\alpha_ie_i,e_k]+[e_1,\frac{2^{k-2}-1}{2^{k-2}}\alpha_1 e_k+\frac{1}{2^{k-2}}\sum\limits_{i=2}^{n-k+2}\beta_{i}e_{i+k-2}]\Big)$
\end{center}
\begin{center}
$=\frac{1}{2}\Big(\alpha_{1}e_{k+1}+\frac{1}{2^{k-2}}(2^{k-2}-1)\alpha_{1}e_{k+1}+\frac{1}{2^{k-2}}\sum\limits_{i=2}^{n-k+1}\beta_{i}e_{i+k-1}\Big)=\frac{2^{k-1}-1}{2^{k-1}}\alpha_1 e_{k+1}+\frac{1}{2^{k-1}}\sum\limits_{i=2}^{n-k+1}\beta_{i}e_{i+k-1}.$
\end{center}

Now, consider the condition of $\frac 1 2$-derivation for the elements $e_1, e_{n-1}:$
\begin{center}
$\varphi(e_{n})=\varphi([e_1,e_{n-1}])=\frac{1}{2}([\varphi(e_1),e_{n-1}]+[e_1,\varphi(e_{n-1})])$
\end{center}
\begin{center}
$=\frac{1}{2}\Big([\sum\limits_{i=1}^{n}\alpha_ie_i,e_{n-1}]+[e_1,\frac{2^{n-3}-1}{2^{n-3}}\alpha_1 e_{n-1}+\frac{1}{2^{n-3}}(b_2e_{n-1}+b_3e_n)  ]\Big)
=\frac{(2^{k-2}-1)\alpha_1+\beta_2}{2^{k-2}} e_n$
\end{center}
Thus, we obtain that
\begin{equation}\label{fien1}
\varphi(e_{n})=\frac{(2^{k-2}-1)\alpha_1+\beta_2}{2^{k-2}} e_n.
\end{equation}

This completes the proof of the theorem. \end{proof}

\subsection{\texorpdfstring{$\frac{1}{2}$-derivation of the algebra $m_2(n)$}{1/2 derivations of the algebra m2(n)}.}\label{subsec3}

\begin{Th}\label{THM: halfder m2n} Any $\frac{1}{2}$-derivation of the algebra $m_2(n)$ ($n\ge 5$)
has the form

\begin{equation}\label{halfder: m2n}
\begin{cases}
\varphi(e_1)=\sum\limits_{i=1}^{n}\alpha_ie_i, \\ \varphi(e_2)=\alpha_1 e_2+ 6\alpha_2e_3+ \sum\limits_{i=3}^{n-4}\alpha_ie_{i+1}+\sum\limits_{i=n-2}^{n}\beta_ie_{i}, \\
\varphi(e_3)= \alpha_1e_3+3\alpha_2e_4+\frac{1}{2}\bigl( (\beta_{n-2}- \alpha_{n-3})e_{n-1} + (\beta_{n-1}- \alpha_{n-2})e_{n} \bigl) ,            \\
\varphi(e_4)= \alpha_1e_4+2\alpha_2e_5+\frac{1}{4}\bigl( (\beta_{n-2}- \alpha_{n-3})e_{n} \bigl), \\
\varphi(e_k)=\alpha_1 e_k+\frac{2^{k-4}+1}{2^{k-4}} \alpha_2 e_{k+1}, \  5\leq k\leq n-1,\\
\varphi(e_n)=\alpha_1e_n. 
\end{cases}
\end{equation}

\end{Th}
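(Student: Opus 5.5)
We write $\varphi(e_1)=\sum_{i=1}^{n}\alpha_ie_i$ and $\varphi(e_2)=\sum_{i=1}^{n}\beta_ie_i$ with arbitrary coefficients, and then determine everything else from identity \eqref{halfderiv1}. The algebra $m_2(n)$ is generated by $e_1,e_2$, since $e_{k+1}=[e_1,e_k]$. Hence, exactly as in the proof of Theorem \ref{THM: halfder m0n}, the relation
\[
\varphi(e_{k+1})=\tfrac12\bigl([\varphi(e_1),e_k]+[e_1,\varphi(e_k)]\bigr),\qquad 2\le k\le n-1,
\]
defines $\varphi(e_3),\dots,\varphi(e_n)$ recursively as linear expressions in the $\alpha_i,\beta_i$. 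The one difference from $m_0(n)$ is that $[\varphi(e_1),e_k]$ now also contains the term $\alpha_2[e_2,e_k]=\alpha_2e_{k+2}$, together with the contributions $-\alpha_i e_{i+2}$ coming from $[e_i,e_2]$. The first step is to carry out this recursion in closed form, proving a formula for $\varphi(e_k)$ by induction on $k$. The coefficient of $e_k$ (degree shift $0$) and the coefficient of $e_{k+1}$ (degree shift $1$) are the ones to track carefully, because they produce the factors $\frac{2^{k-4}+1}{2^{k-4}}$ in the statement.

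The second step imposes the remaining constraints, which the recursion does not see. These are the identities \eqref{halfderiv1} for the pairs $(e_2,e_k)$ with $3\le k\le n-2$, where $[e_2,e_k]=e_{k+2}$ gives a second, independent expression for $\varphi(e_{k+2})$. They are also the identities for the pairs $(e_i,e_j)$ with $i,j\ge 3$, or with $i=2$ and $j=n-1,n$, where the bracket vanishes. We split each resulting equation into homogeneous components by degree shift, using the grading $[\mathfrak g_i,\mathfrak g_j]\subseteq\mathfrak g_{i+j}$.

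We expect the components to behave as follows:
\begin{itemize}
\item Shift $-1$: the vanishing of $[e_2,e_3]$-type defects forces $\beta_1=0$, just as $b_1=0$ in Theorem \ref{THM: halfder m0n}.
\item Shift $0$: comparing the two expressions for $\varphi(e_5)$, via $[e_1,e_4]$ and via $[e_2,e_3]$, should force $\beta_2=\alpha_1$. This is why every diagonal entry in the statement equals $\alpha_1$, in contrast to $m_0(n)$.
\item Shift $1$: this fixes $\beta_3$ in terms of $\alpha_2$, giving the coefficient $6\alpha_2$.
\item Intermediate shifts: these express $\beta_{j}$ through $\alpha_{j-1}$ for $4\le j\le n-3$.
\item Top shifts: only the last parameters $\beta_{n-2},\beta_{n-1},\beta_n$ survive as free. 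They enter $\varphi(e_3)$ and $\varphi(e_4)$ only through the differences $\beta_{n-2}-\alpha_{n-3}$ and $\beta_{n-1}-\alpha_{n-2}$, and land in the one- and two-dimensional top part $\langle e_{n-1},e_n\rangle$.
\end{itemize}

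The third step checks sufficiency: any map of the stated form satisfies \eqref{halfderiv1} on all pairs $(e_i,e_j)$. By linearity and the grading it suffices to check this shift by shift. The diagonal part $\alpha_1\,\mathrm{id}$ is trivially a $\frac12$-derivation, so what remains is the $\alpha_2$-part and the boundary parameters.

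The main obstacle is the second step, which means solving the overdetermined linear system coming from the pairs $(e_2,e_k)$ and $(e_i,e_j)$ with $i,j\ge3$ and reconciling it with the closed form from the recursion. Two things make this delicate. First, the shift-$1$ coefficients must be consistent for all $k$ simultaneously. Second, the boundary degrees near $e_{n-1},e_n$ behave differently from the bulk, because the relations $[e_2,e_{n-1}]=[e_2,e_n]=0$ impose fewer conditions there. We would first work out small cases $n=5,6,7$ explicitly, to pin down the exact constants such as $6$, $3$ and $2$ and which $\beta$'s stay free, and then prove the general pattern by induction on the degree shift.
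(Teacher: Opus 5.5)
Your framework is the right one: recursion along $[e_1,e_k]$, then every remaining identity split by degree shift, then sufficiency. But the outcome you predict at shift $1$ is wrong, and Step 3 fails on the $\alpha_2$-part. For $n\ge 7$ the statement itself is false.

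Write the shift-$1$ component as $\psi(e_i)=c_ie_{i+1}$. The identities give:
\begin{itemize}
\item from $(e_1,e_2)$: $c_3=\tfrac12 c_2$;
\item from $(e_1,e_k)$ with $3\le k\le n-2$: $c_{k+1}=\tfrac12(c_1+c_k)$, the extra term being $c_1[e_2,e_k]=c_1e_{k+2}$;
\item from $(e_2,e_k)$ with $3\le k\le n-3$: $c_{k+2}=\tfrac12 c_k$, because $[e_3,e_k]=0$.
\end{itemize}
Put $c_1=\alpha_2$. The $(e_2,e_3)$ relation together with the recursion yields $c_2=6\alpha_2$, $c_3=3\alpha_2$, $c_4=2\alpha_2$, $c_5=\tfrac32\alpha_2$, as in the statement. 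But as soon as $n\ge 7$, the identity for $(e_2,e_4)$ lands in $e_7\neq 0$ and forces $c_6=\tfrac12 c_4=\alpha_2$. The recursion instead forces $c_6=\tfrac12(c_1+c_5)=\tfrac54\alpha_2$, hence $\alpha_2=0$.

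Concretely, for $n=7$ take the map of the statement with $\alpha_2=1$ and all other parameters $0$. Then $\varphi([e_2,e_4])=\varphi(e_6)=\tfrac54 e_7$, whereas $\tfrac12\bigl([\varphi(e_2),e_4]+[e_2,\varphi(e_4)]\bigr)=\tfrac12\bigl([6e_3,e_4]+[e_2,2e_5]\bigr)=e_7$. The ``consistency of the shift-$1$ coefficients for all $k$'' that you flagged as delicate is in fact impossible.

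Carried out correctly, your program proves a corrected result. For $n\ge 7$ the $\frac12$-derivations of $m_2(n)$ are exactly the maps of the stated form with $\alpha_2=0$, so the terms $6\alpha_2e_3$, $3\alpha_2e_4$, $2\alpha_2e_5$ and $\frac{2^{k-4}+1}{2^{k-4}}\alpha_2e_{k+1}$ all disappear. Your predictions for the shifts $-1$, $0$, $2\le s\le n-5$ and $n-4\le s\le n-1$ are correct.

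The formula holds as written only for $n=6$, where the $(e_2,e_4)$ identity would land in the nonexistent $e_7$. For $n=5$ the index ranges collide, since $n-3=2$. There the shift-$1$ part is two-dimensional: the $e_3$-coefficient $b$ of $\varphi(e_2)$ is free and $\varphi(e_3)$ receives $\frac b2e_4$. Taken literally with $\alpha_2=1$ and all else $0$, the formula already violates the identity on $(e_1,e_2)$: it gives $\frac52e_4$ instead of $3e_4$.

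The paper's own proof has the same hole. After comparing the two expressions for $\varphi(e_5)$, it only runs the $[e_1,e_k]$ induction. It never imposes the identities on $(e_2,e_k)$ for $k\ge 4$ and never checks sufficiency, which is exactly where $\alpha_2$ is killed.
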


\begin{proof}
Since \(m_2(n)\) has two multiply, we exploit it to determine the $\frac{1}{2}$-derivation.
$$\varphi(e_1)=\sum\limits_{i=1}^{n}\alpha_ie_i,\quad \varphi(e_2)=\sum\limits_{i=1}^{n}\beta_ie_i.$$

Now consider the condition of $\frac 1 2$-derivation for the elements $e_1$ and $e_i:$
\begin{center}
$\varphi(e_3)=\varphi([e_1,e_2])=\frac{1}{2}([\varphi(e_1),e_2]+[e_1,\varphi(e_2)]) =\frac{1}{2}([\sum\limits_{i=1}^{n}\alpha_ie_i,e_2]+[e_1,\sum\limits_{i=1}^{n}\beta_ie_i])
=\frac{1}{2}(\alpha_1 e_3-\sum\limits_{i=3}^{n-2}\alpha_{i}e_{i+2}+\sum\limits_{i=2}^{n-1}\beta_{i}e_{i+1}).$
\end{center}
\begin{center}
$\varphi(e_4)=\varphi([e_1,e_3])=\frac{1}{2}([\varphi(e_1),e_3]+[e_1,\varphi(e_3)]) =\frac{1}{2}([\sum\limits_{i=1}^{n}\alpha_ie_i,e_3]+[e_1,\frac{1}{2}(\alpha_1 e_3-\sum\limits_{i=3}^{n-2}\alpha_{i}e_{i+2}+\sum\limits_{i=2}^{n-1}\beta_{i}e_{i+1}])
$
\end{center}
\begin{center}
    $=\frac{1}{2} \bigl(\alpha_1 e_4+\alpha_2e_5+\frac{1}{2}(\alpha_1 e_4-\sum\limits_{i=3}^{n-3}\alpha_{i}e_{i+3}+\sum\limits_{i=2}^{n-2}\beta_{i}e_{i+2}) \bigl).$
\end{center}
\begin{equation}\label{varpi(e_5)_1 m2n}
\begin{aligned}
    \varphi(e_5)=\varphi ([e_1,e_4])=\frac{1}{2}([\varphi(e_1),e_4]+[e_1,\varphi(e_4)])= \\
    =\frac{1}{2} \Bigl([\sum\limits_{i=1}^{n}\alpha_ie_i,e_4]+[e_1,\frac{1}{2} \bigl(\alpha_1 e_4+\alpha_2e_5+\frac{1}{2}(\alpha_1 e_4-\sum\limits_{i=3}^{n-3}\alpha_{i}e_{i+3}+\sum\limits_{i=2}^{n-2}\beta_{i}e_{i+2}) \bigl)]\Bigl) \\ = \frac{7}{8}\alpha_1e_5+\frac{3}{4}\alpha_2e_6+\frac{1}{8}\Bigl( \sum\limits_{i=2}^{n-3}\beta_{i}e_{i+3} - \sum\limits_{i=3}^{n-4}\alpha_{i}e_{i+4} \Bigl)
\end{aligned}
\end{equation}

According to the structure of \(m_2(n)\), we have \([e_2, e_3] = e_5\). Applying the definition of a \(\frac{1}{2}\)-derivation yields 

\begin{equation}\label{varpi(e_5)_2 m2n}
\begin{aligned}
    \varphi(e_5)=\varphi ([e_{2},e_{3}])=\frac{1}{2}([\varphi (e_{2}),e_{3}]+[e_{2},\varphi (e_{3})])= \frac{1}{2} \Bigl([\sum\limits_{i=1}^{n}\beta_ie_i,e_3]+[e_2,\frac{1}{2}(\alpha_1 e_3-\sum\limits_{i=3}^{n-2}\alpha_{i}e_{i+2}+\sum\limits_{i=2}^{n-1}\beta_{i}e_{i+1})]\Bigl) \\ = \frac{1}{2}\beta_1e_4+(\frac{1}{2}\beta_2+\frac{1}{4}\alpha_1) e_5+\frac{1}{4}\Bigl( \sum\limits_{i=2}^{n-3}\beta_{i}e_{i+3} - \sum\limits_{i=3}^{n-4}\alpha_{i}e_{i+4} \Bigl)
\end{aligned}
\end{equation}

By equating the coefficients of the basis elements in the expressions for \(\varphi(e_5)\) in equations \eqref{varpi(e_5)_1 m2n} and \eqref{varpi(e_5)_2 m2n}, we obtain.

$$\begin{cases}
   e_4: \quad \frac{1}{2}\beta_1=0   \\
   e_5: \quad  \frac{7}{8}\alpha_1+\frac{1}{8}\beta_2=\frac{3}{4}\beta_2+\frac{1}{4}\alpha_1   \\ 
    e_6: \quad \frac{3}{4}\alpha_2+\frac{1}{8}\beta_3=\frac{1}{4}\beta_3  \\
    \frac{1}{8}\Bigl( \sum\limits_{i=4}^{n-3}\beta_{i}e_{i+3} - \sum\limits_{i=3}^{n-4}\alpha_{i}e_{i+4}
    \Bigl)= \frac{1}{4}\Bigl( \sum\limits_{i=4}^{n-3}\beta_{i}e_{i+3} - \sum\limits_{i=3}^{n-4}\alpha_{i}e_{i+4} \Bigl) 
\end{cases}$$
so we take $\beta_1=0, \quad \beta_2= a_1, \quad  \beta_3=6 \alpha_2  $ and so last cases: $ \sum\limits_{i=4}^{n-3}\beta_{i}e_{i+3} - \sum\limits_{i=3}^{n-4}\alpha_{i}e_{i+4}=0 \Longrightarrow \beta_i=\alpha_{i-1} \quad where \quad 4 \le i \le n-3$

We prove the following equality for $5\leq k\leq n-1$ by induction:
$$\varphi(e_k)=\alpha_1 e_k+\frac{2^{k-4}+1}{2^{k-4}} \alpha_2 e_{k+1}$$

if $k=5$ 
$$\varphi(e_5)=\varphi ([e_1,e_4])=\frac{1}{2}([\varphi(e_1),e_4]+[e_1,\varphi(e_4)])=\alpha_1 e_5+\frac{3}{2} \alpha_2 e_{6}$$

The base case \(k=5\) is immediately satisfied by the relation above. Now, assuming the hypothesis holds for \(k\), we extend the argument to \(k+1\). By evaluating the \(\frac{1}{2}\)-derivation property on the elements \(e_{1}\) and \(e_{k}\), we obtain

\begin{center}
$\varphi(e_{k+1})=\varphi([e_1,e_k])=\frac{1}{2}([\varphi(e_1),e_k]+[e_1,\varphi(e_k)])=\frac{1}{2}\Big([\sum\limits_{i=1}^{n}\alpha_ie_i,e_k]+[e_1,\alpha_1 e_k+\frac{2^{k-4}+1}{2^{k-4}} \alpha_2 e_{k+1}]\Big)$
\end{center}
\begin{center}
$=\frac{1}{2}\Big( \alpha_1e_{k+1}+\alpha_2e_{k+2}+\alpha_1 e_{k+1}+\frac{2^{k-4}+1}{2^{k-4}} \alpha_2 e_{k+2}  \Big)= \alpha_1 e_{k+1}+\frac{2^{k-3}+1}{2^{k-3}} \alpha_2 e_{k+2}$
\end{center}

Now, consider the condition of $\frac 1 2$-derivation for the elements $e_1, e_{n-1}:$
\begin{center}
$\varphi(e_{n})=\varphi([e_1,e_{n-1}])=\frac{1}{2}([\varphi(e_1),e_{n-1}]+[e_1,\varphi(e_{n-1})])=\frac{1}{2}\Big([\sum\limits_{i=1}^{n}\alpha_ie_i,e_{n-1}]+[e_1,\alpha_1 e_k+\frac{2^{k-4}+1}{2^{k-4}} \alpha_2 e_{k+1}  ]\Big)$
\end{center}
\begin{center}
$= \frac{1}{2}\Big( \alpha_1 e_{n} +\alpha_1 e_{n} \Big)= \alpha_1 e_{n}
$
\end{center}
Thus, we obtain that
\begin{equation}\label{varfi en m2n}
\varphi(e_{n})=\alpha_1 e_{n}
\end{equation}

\end{proof}

\subsection{\texorpdfstring{$\frac{1}{2}$-derivation of the algebra $W^+(n)$}{1/2 derivations of the algebra w+(n)}.}\label{subsec4}

\begin{Th}\label{THM: halfder w+n} Any $\frac{1}{2}$-derivation of the algebra $W^+(n)$ ($n\ge 12$)
has the form

\begin{equation}\label{halfder: w+n}
\begin{cases}
\varphi(e_1)=\sum\limits_{i=1}^{n}\alpha_ie_i, \\ \varphi(e_2)=\sum\limits_{i=1}^{n-4}\alpha_ie_{i+1}+\sum\limits_{i=n-2}^{n}\beta_ie_{i}, \\
\varphi(e_3)= \sum\limits_{i=1}^{n-4}\alpha_ie_{i+2} + \frac{1}{2} \Bigl(   (n-3)\beta_{n-2} -(n-7)\alpha_{n-3} \Bigl)e_{n-1}+  \frac{1}{2} \Bigl(   (n-2)\beta_{n-1} -(n-6)\alpha_{n-2} \Bigl)e_{n}     \\
\varphi(e_4)= \sum\limits_{i=1}^{n-4}\alpha_ie_{i+3} + \frac{1}{8} \Bigl(   (n^2-5n+6)\beta_{n-2} -(n^2-5n-10)\alpha_{n-3} \Bigl)e_{n}, \\
\varphi(e_k)=\sum\limits_{i=1}^{n-k+1}\alpha_ie_{i+k-1}, \  5\leq k\leq n-1,\\
\varphi(e_n)=\alpha_1e_n. 
\end{cases}
\end{equation}

\end{Th}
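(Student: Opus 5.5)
Following the pattern of Theorems \ref{THM: halfder m0n} and \ref{THM: halfder m2n}, I would write $\varphi(e_1)=\sum_{i=1}^n\alpha_ie_i$ and $\varphi(e_2)=\sum_{i=1}^n\beta_ie_i$ with unknown coefficients. Since $W^+(n)$ is generated by $e_1,e_2$, every other $\varphi(e_k)$ is then forced by the $\frac12$-derivation identity. I would use
\[
e_{k+1}=\tfrac{1}{k-1}[e_1,e_k],\qquad k\ge 2,
\]
because $[e_1,e_k]=(k-1)e_{k+1}$. This gives the recursion
\[
\varphi(e_{k+1})=\frac{1}{2(k-1)}\Bigl([\varphi(e_1),e_k]+[e_1,\varphi(e_k)]\Bigr).
\]
All brackets are explicit through $[e_i,e_j]=(j-i)e_{i+j}$, so $\varphi(e_3),\varphi(e_4),\dots$ become explicit linear expressions in the $\alpha_i,\beta_i$. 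The bracket raises degree by at least one, so $\varphi(e_k)$ only involves $e_j$ with $j\ge k-1$, up to the $\beta_1$ term.

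Next I would extract the constraints on $\beta$ by evaluating $\varphi$ on a second bracket producing the same basis vector. The natural choice is $[e_2,e_3]=e_5$, compared with $e_5=\frac13[e_1,e_4]$, exactly as in the $m_2(n)$ proof. If needed I would also use $[e_2,e_4]=2e_6$ against $e_6=\frac14[e_1,e_5]$, and $[e_1,e_2]$, $[e_2,e_k]$ at the top degrees. Equating coefficients degree by degree should give:
\begin{itemize}
\item from the lowest terms, $\beta_1=0$ (or the relation it is forced into);
\item from the generic middle coefficients, $\beta_{i}=\alpha_{i-1}$ for $2\le i\le n-3$;
\item no constraint on $\beta_{n-2},\beta_{n-1},\beta_n$, because the corresponding brackets with $e_3$ or $e_2$ land beyond $e_n$.
\end{itemize}
The coefficient identities here are rational functions of $i$ coming from the factors $(j-i)$ and $\frac{1}{k-1}$. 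I would check that they force $\beta_{i+1}=\alpha_i$ by observing that $\varphi=L_{\alpha}$, with $L_\alpha(e_k)=\sum_i\alpha_ie_{i+k-1}$, solves them. This map is the ``multiplication by $\sum\alpha_i e_{i-1}$'' shift that appears in the formula for $\varphi(e_k)$, $k\ge5$. Uniqueness of the solution then follows from the triangular structure of the system.

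Substituting these relations back into the recursion, I would show by induction on $k\ge5$ that $\varphi(e_k)=\sum_{i=1}^{n-k+1}\alpha_ie_{i+k-1}$. The point is that the free boundary parameters $\beta_{n-2},\beta_{n-1}$ only survive in $\varphi(e_3)$ and $\varphi(e_4)$, because each further bracket with $e_1$ pushes them past $e_n$. The explicit coefficients $\frac12((n-3)\beta_{n-2}-(n-7)\alpha_{n-3})$ and their analogues in $e_n$ come from tracking the top two or three coordinates through the first two recursion steps, where the truncation $i+j>n$ breaks the clean shift pattern. Finally, $\varphi(e_n)$ follows from $e_n=\frac1{n-2}[e_1,e_{n-1}]$ and gives $\alpha_1e_n$.

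The main obstacle is the boundary bookkeeping near $e_{n-1},e_n$. There the structure constants $(j-i)$ and the normalisations $\frac1{k-1}$ produce the specific polynomial coefficients in $n$ in $\varphi(e_3)$ and $\varphi(e_4)$. One must also verify that no remaining instance of the identity, such as $[e_2,e_{n-2}]$ or $[e_3,e_{n-3}]$, imposes further relations on $\beta_{n-2},\beta_{n-1},\beta_n$. For that step I would systematically check the identity on all pairs $(e_i,e_j)$ with $i+j\in\{n-1,n\}$ and $i\le 4$, since only these see the boundary parameters.
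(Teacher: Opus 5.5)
Your plan follows the paper's route: generate everything from $e_1,e_2$, recurse through $e_{k+1}=\frac{1}{k-1}[e_1,e_k]$, then compare two expressions for $\varphi(e_5)$. The recursion, $\beta_1=0$, the induction for $k\ge 5$ and $\varphi(e_n)=\alpha_1e_n$ are all fine.

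\textbf{First gap: deriving $\beta_{i+1}=\alpha_i$.} The identity respects the grading, so you can work one degree at a time. At the coefficient of $e_{i+4}$, comparing $\varphi(e_5)=\frac13\varphi([e_1,e_4])$ with $\varphi(e_5)=\varphi([e_2,e_3])$ gives a single scalar equation:
\[
\tfrac{1}{48}\,(i-6)(i^2-3i+8)\,(\beta_{i+1}-\alpha_i)=0 .
\]
Knowing that $L_\alpha$ solves it only tells you that $\alpha_i$ and $\beta_{i+1}$ enter with opposite coefficients. No triangular structure makes that coefficient nonzero, and it vanishes at $i=6$. Concretely, for $\varphi(e_1)=ae_6$ and $\varphi(e_2)=be_7$, both routes give $\varphi(e_5)=(7b-6a)e_{10}$.

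The paper's displayed general equation degenerates in the same way at $i=6$, so its conclusion $\beta_{i+1}=\alpha_i$ is unsupported there too. Your ``if needed'' fallback is therefore mandatory. In this degree, $e_6=\frac14[e_1,e_5]$ gives $\varphi(e_6)=\frac18(63b-55a)e_{11}$, while $e_6=\frac12[e_2,e_4]$ gives $\frac{1}{16}(135b-119a)e_{11}$. Hence $\frac{9}{16}(a-b)=0$, and this comparison is available because $n\ge 12$.

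\textbf{Second gap: the boundary coefficients.} Your boundary bookkeeping cannot reproduce the displayed coefficients, because they are miscomputed in the statement itself. Since $[e_{n-3},e_2]=-(n-5)e_{n-1}$ and $[e_{n-2},e_2]=-(n-4)e_n$, the top two coefficients of $\varphi(e_3)$ are $\frac12\bigl((n-3)\beta_{n-2}-(n-5)\alpha_{n-3}\bigr)$ and $\frac12\bigl((n-2)\beta_{n-1}-(n-4)\alpha_{n-2}\bigr)$. The $e_n$-coefficient of $\varphi(e_4)$ is $\frac18\bigl((n^2-5n+6)\beta_{n-2}-(n^2-5n-2)\alpha_{n-3}\bigr)$.

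Your own $L_\alpha$ test detects this. The truncated shift $e_k\mapsto e_{k+n-4}$ is a $\frac12$-derivation with $\alpha_{n-3}=\beta_{n-2}=1$, $\varphi(e_3)=e_{n-1}$ and $\varphi(e_4)=e_n$. The displayed formulas would instead force $2e_{n-1}$ and $2e_n$.

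So, once the $i=6$ case is settled through $[e_2,e_4]$, your argument proves the stated shape. That means free parameters $\alpha_1,\dots,\alpha_n,\beta_{n-2},\beta_{n-1},\beta_n$, together with the stated $\varphi(e_2)$, $\varphi(e_k)$ for $k\ge5$, and $\varphi(e_n)$. But it proves it with the corrected boundary entries in $\varphi(e_3)$ and $\varphi(e_4)$, not the printed ones.
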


\begin{proof}

$$\varphi(e_1)=\sum\limits_{i=1}^{n}\alpha_ie_i,\quad \varphi(e_2)=\sum\limits_{i=1}^{n}\beta_ie_i.$$

Now consider the condition of $\frac 1 2$-derivation for the elements $e_1$ and $e_i:$
\begin{center}
$\varphi(e_3)=\varphi([e_1,e_2])=\frac{1}{2}([\varphi(e_1),e_2]+[e_1,\varphi(e_2)]) $
\end{center}
\begin{center}
$=\frac{1}{2}([\sum\limits_{i=1}^{n}\alpha_ie_i,e_2]+[e_1,\sum\limits_{i=1}^{n}\beta_ie_i])
=\frac{1}{2}(\alpha_1 e_3-\sum\limits_{i=3}^{n-2} (i-2)\alpha_{i}e_{i+2}+\sum\limits_{i=2}^{n-1}(i-1) \beta_{i}e_{i+1}).$
\end{center}
\begin{center}
$\varphi(e_4)=\varphi( \frac{1}{2} [e_1,e_3])=\frac{1}{4}([\varphi(e_1),e_3]+[e_1,\varphi(e_3)]) $
\end{center}
\begin{center}
$= \frac{1}{4} \bigl([\sum\limits_{i=1}^{n}\alpha_ie_i,e_3]+[e_1,\frac{1}{2}(\alpha_1 e_3-\sum\limits_{i=3}^{n-2} (i-2)\alpha_{i}e_{i+2}+\sum\limits_{i=2}^{n-1}(i-1) \beta_{i}e_{i+1})]\bigl)
$
\end{center}
\begin{center}
    $= \frac{1}{4} \bigl(2\alpha_1 e_4+\alpha_2e_5-2 \alpha_3e_6+\frac{1}{2}(-\sum\limits_{i=4}^{n-3} (i^2+i-8)\alpha_{i}e_{i+3}+\sum\limits_{i=2}^{n-2}i(i-2)\beta_{i}e_{i+2}) \bigl).$
\end{center}
\begin{equation}\label{varpi(e_5)_1 w+n}
\begin{aligned}
    \varphi(e_5)=\varphi (
    \frac{1}{3}[e_1,e_4])=\frac{1}{6}([\varphi(e_1),e_4]+[e_1,\varphi(e_4)])= \\
    =\frac{1}{6} \Bigl([\sum\limits_{i=1}^{n}\alpha_ie_i,e_4]+[e_1,  \frac{1}{4} \Bigl(  2\alpha_1 e_4+\alpha_2e_5-2 \alpha_3e_6+\frac{1}{2}(-\sum\limits_{i=4}^{n-3} (i^2+i-8)\alpha_{i}e_{i+3}+\sum\limits_{i=2}^{n-2}i(i-2)\beta_{i}e_{i+2}) ]\Bigl) \Bigl)  \\ =  \frac{1}{8}(7\alpha _{1}+\beta _{2})e_{5}+\frac{1}{2}(\alpha _{2}+\beta _{3})e_{6}+\frac{1}{4}(-\alpha _{3}+5\beta _{4})e_{7}+\frac{1}{2}(-3\alpha _{4}+5\beta _{5})e_{8}+                              \\ \frac{1}{48} \sum\limits_{i=5}^{n-4} (i+1)(i^2+2i)\beta_{i+1}e_{i+4} - \frac{1}{48} \sum\limits_{i=5}^{n-4} (i^3+3i^2+2i-48)\alpha_ie_{i+4}  \Bigl)
\end{aligned}
\end{equation}

According to the structure of \(W^+(n)\), we have \([e_2, e_3] = e_5\). Applying the definition of a \(\frac{1}{2}\)-derivation yields 

\begin{equation}\label{varpi(e_5)_2 w+n}
\begin{aligned}
    \varphi(e_5)=\varphi ([e_{2},e_{3}])=\frac{1}{2}([\varphi (e_{2}),e_{3}]+[e_{2},\varphi (e_{3})])= \\
    =\frac{1}{2} \Bigl([\sum\limits_{i=1}^{n}\beta_ie_i,e_3]+[e_2,\frac{1}{2}(\alpha_1 e_3-\sum\limits_{i=3}^{n-2} (i-2)\alpha_{i}e_{i+2}+\sum\limits_{i=2}^{n-1}(i-1) \beta_{i}e_{i+1})]\Bigl) \\ = \beta_1 e_4+\frac{1}{4} (\alpha_1 + 3 \beta_2) e_5 + \beta_3 e_6 + \frac{1}{4} (-3 \alpha_3 + 7 \beta_4) e_7 + (-2 \alpha_4 + 3 \beta_5) e_8
\\ +\frac{1}{4} \sum\limits_{i=5}^{n-4} (i^2-2i+4)\beta_{i+1}e_{i+4} - \frac{1}{4} \sum\limits_{i=5}^{n-4} i(i-2)\alpha_ie_{i+4}  
\end{aligned}
\end{equation}

By equating the coefficients of the basis elements in the expressions for \(\varphi(e_5)\) in equations \eqref{varpi(e_5)_1 w+n} and \eqref{varpi(e_5)_2 w+n}, we obtain.

$$\begin{cases}
   e_4: \quad \beta_1=0   \\
   e_5: \quad  \frac{7}{8}\alpha_1+\frac{1}{8}\beta_2=\frac{3}{4}\beta_2+\frac{1}{4}\alpha_1   \\ 
    e_6: \quad \frac{1}{2}\alpha_2+\frac{1}{2}\beta_3=\beta_3  \\
    e_7: \quad \frac{1}{4}(-\alpha _{3}+5\beta _{4})=\frac{1}{4} (-3 \alpha_3 + 7 \beta_4)  \\
    e_8: \quad \frac{1}{2}(-3\alpha _{4}+5\beta _{5})=(-2 \alpha_4 + 3 \beta_5)  \\
    \frac{1}{48} \sum\limits_{i=5}^{n-4} \Bigl( (i+1)(i^2+2i)\beta_{i+1} -  (i^3+3i^2+2i-48)\alpha_i \Bigl)e_{i+4} = \frac{1}{4} \sum\limits_{i=5}^{n-4} \Bigl( (i^2-2i+4)\beta_{i+1} -  i(i-2)\alpha_i \Bigl) e_{i+4}  
\end{cases}$$

From this system, we obtain the following result: $b_1=0$
and  $b_{i+1}=a_i$ where $1 \le i \le n-4$.

We prove the following equality for $5\leq k\leq n-1$ by induction:
$$\varphi(e_k)=\sum\limits_{i=1}^{n-k+1}\alpha_ie_{i+k-1}, \  5\leq k\leq n-1$$

if $k=5$ 
$$\varphi(e_5)=\sum\limits_{i=1}^{n-4}\alpha_ie_{i+4}$$

The base case \(k=5\) is immediately satisfied by the relation above. Now, assuming the hypothesis holds for \(k\), we extend the argument to \(k+1\). By evaluating the \(\frac{1}{2}\)-derivation property on the elements \(e_{1}\) and \(e_{k}\), we obtain

\begin{center}
$\varphi(e_{k+1})=\varphi(\frac{1}{k-1}[e_1,e_k])=\frac{1}{2(k-1)}([\varphi(e_1),e_k]+[e_1,\varphi(e_k)])=\frac{1}{2(k-1)}\Big([\sum\limits_{i=1}^{n}\alpha_ie_i,e_k]+[e_1,\sum\limits_{i=1}^{n-k+1}\alpha_ie_{i+k-1}]\Big)$
\end{center}
\begin{center}
$=\sum\limits_{i=1}^{n-k}\alpha_ie_{i+k}$
\end{center}

Now, consider the condition of $\frac 1 2$-derivation for the elements $e_1, e_{n-1}:$
\begin{center}
$\varphi(e_{n})=\varphi([e_1,e_{n-1}])=\frac{1}{2(n-2)}([\varphi(e_1),e_{n-1}]+[e_1,\varphi(e_{n-1})])=\frac{1}{2(n-2)}\Big([\sum\limits_{i=1}^{n}\alpha_ie_i,e_{n-1}]+[e_1,\sum\limits_{i=1}^{2}\alpha_ie_{i+n-2}  ]\Big)$
\end{center}
\begin{center}
$= \frac{1}{2(n-2)}\Big( (n-2) \alpha_1 e_{n} +(n-2) \alpha_1 e_{n} \Big)= \alpha_1 e_{n}
$
\end{center}
Thus, we obtain that
\begin{equation}\label{varfi en w+n}
\varphi(e_{n})=\alpha_1 e_{n}
\end{equation}

\end{proof}

\subsection{\texorpdfstring{$\frac{1}{2}$-derivation of the algebra $m_{0,1}(2k+1)$}{1/2 derivations of the algebra m01}.}\label{subsec5}

\begin{Th}\label{THM: halfder m01n} Any $\frac{1}{2}$-derivation of the algebra $m_{0,1}(n)$
has the form

\begin{equation}\label{halfder: m01n}
\begin{cases}
\varphi(e_1)=\sum\limits_{i=1}^{n}\alpha_ie_i, \\ \varphi(e_2)=\alpha_1 e_2+\sum\limits_{i=3}^{n}\beta_ie_i,, \\  
\varphi(e_k)=\alpha_1 e_k+\frac{1}{2^{k-2}}\sum\limits_{i=3}^{n-k+2}\beta_{i}e_{i+k-2}+\frac{1}{2}(-1)^{k+1}a_{n-k+1}e_n, \  3\leq k\leq n-1,\\
\varphi(e_n)=\alpha_1 e_n. 
\end{cases}
\end{equation}
\end{Th}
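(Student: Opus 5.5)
We follow the same scheme as in Theorems \ref{THM: halfder m0n} and \ref{THM: halfder m2n}. Since $e_1,e_2$ generate $m_{0,1}(n)$ with $n=2k+1$, a $\frac12$-derivation is determined by
$$\varphi(e_1)=\sum_{i=1}^{n}\alpha_ie_i,\qquad \varphi(e_2)=\sum_{i=1}^{n}\beta_ie_i.$$
The chain relations $[e_1,e_j]=e_{j+1}$ then give every $\varphi(e_j)$ recursively. The only extra brackets are $[e_l,e_{n-l}]=(-1)^{l+1}e_n$ for $2\le l\le k$, and they all land in the centre $\langle e_n\rangle$. So we expect the recursion to be essentially that of $m_0(n)$, with corrections only in the $e_n$-coordinate. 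The remaining relations ($[e_i,e_j]=0$ for $2\le i<j$ with $i+j\neq n$, and the $e_n$-relations themselves) will cut down the parameters.

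\textbf{Recursion along the chain.} First compute
$$\varphi(e_3)=\tfrac12\bigl([\varphi(e_1),e_2]+[e_1,\varphi(e_2)]\bigr).$$
Besides $\alpha_1e_3$ and $\sum\beta_ie_{i+1}$, it picks up an $e_n$-term from $[\alpha_{n-2}e_{n-2},e_2]$. Then prove by induction on $j$ ($3\le j\le n-1$) a closed formula for $\varphi(e_j)$. It should consist of:
\begin{itemize}
\item a diagonal coefficient $\frac{2^{j-2}-1}{2^{j-2}}\alpha_1+\frac{1}{2^{j-2}}\beta_2$;
\item a shifted block $\frac{1}{2^{j-2}}\sum_{i\ge3}\beta_ie_{i+j-2}$;
\item an $e_n$-coefficient produced by $[\alpha_{n-j+1}e_{n-j+1},e_{j-1}]$ at each step.
\end{itemize}
Keeping track of the $e_n$-coefficient is the bookkeeping part. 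At step $j$, the term $\frac12[\varphi(e_1),e_{j-1}]$ contributes $\frac12(-1)^{\cdot}\alpha_{n-j+1}e_n$. Anything already in the $e_n$-component of $\varphi(e_{j-1})$ is killed by $[e_1,\cdot]$, since $e_n$ is central. This explains why the formula carries a single term $\frac12(-1)^{j+1}\alpha_{n-j+1}e_n$, with no accumulated sum.

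\textbf{Constraints (main obstacle).} Next impose the $\frac12$-derivation identity on pairs $(e_2,e_j)$, and more generally $(e_i,e_j)$ with $i,j\ge2$. Use $[e_2,e_3]=0$ first, exactly as in Theorem \ref{THM: halfder m0n}: the $e_4$-coefficient of $[\varphi(e_2),e_3]$ is $\beta_1$, so $\beta_1=0$.

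The pairs $(e_l,e_{n-l})$ with nonzero bracket $\pm e_n$ are what should force $\beta_2=\alpha_1$. Comparing the $e_n$-coefficients of
$$\varphi(e_n)=\varphi([e_1,e_{n-1}])\quad\text{and}\quad (-1)^{l+1}\varphi(e_n)=\varphi([e_l,e_{n-l}])$$
gives, for $l=2$, an equation like
$$\tfrac{2^{n-2}-1}{2^{n-2}}\alpha_1+\tfrac{\beta_2}{2^{n-2}}=\tfrac12\Bigl(\bigl(\tfrac{2^{n-4}-1}{2^{n-4}}+1\bigr)\alpha_1+\bigl(1+\tfrac{1}{2^{n-4}}\bigr)\beta_2\Bigr).$$
Solving it gives $\beta_2=\alpha_1$, which collapses every diagonal coefficient to $\alpha_1$ and yields $\varphi(e_n)=\alpha_1e_n$.

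The hardest part is the remaining family: $[e_i,e_j]=0$ with $i+j\le n-1$, and the $e_n$-components of the pairs with $i+j=n$. One must check that these impose no further restriction on $\beta_3,\dots,\beta_n$ or on $\alpha_2,\dots,\alpha_n$. Note that $[\varphi(e_i),e_j]+[e_i,\varphi(e_j)]$ has nonzero contributions only from the $e_1$-components, equal to $\alpha_1$-multiples only if $i=1$. So for $i,j\ge2$ the brackets only produce $e_n$-terms of the form $\pm\beta_{n-j}$ and $\pm\beta_{n-i}$ times powers of $\frac12$. I would first try to show that these terms cancel in pairs by the sign pattern $(-1)^{l+1}$, using that $e_{i}$ and $e_{n-i}$ pair antisymmetrically. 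If they do not cancel, they would impose additional linear relations on the $\beta$'s, and the statement would have to be adjusted accordingly.

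Finally, we verify directly that maps of the stated form satisfy \eqref{halfderiv1} on all basis pairs. This is routine once the above cancellations are established.
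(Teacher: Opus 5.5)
Your derivation of the shape of $\varphi$ (the chain recursion, $\beta_1=0$ from the $e_4$-component of the identity on $(e_2,e_3)$, and $\beta_2=\alpha_1$ from computing $\varphi(e_n)$ via $[e_1,e_{n-1}]$ and via $[e_2,e_{n-2}]$) is the paper's argument. It does prove that every $\frac12$-derivation has the displayed form.

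There is a slip in your displayed equation for $l=2$. The diagonal entry of $\varphi(e_2)$ is $\beta_2$, so the $\alpha_1$-coefficient on the right is $\frac{2^{n-4}-1}{2^{n-4}}$, with no extra $+1$. As written, the equation gives $\beta_2=\alpha_1/(2^{n-3}+1)$. The corrected equation reduces to $(\alpha_1-\beta_2)\bigl(\frac12+\frac{1}{2^{n-2}}\bigr)=0$.

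The genuine gap is in the step you call the main obstacle, on which your final verification depends: the cancellation you hope for does not occur. Since $n$ is odd, $[e_a,e_{n-a}]=(-1)^{a+1}e_n$ for all $2\le a\le n-2$. Take $2\le i<j$ with $i+j\le n-1$, and put $s=n-i-j+2$. The only contributions to $[\varphi(e_i),e_j]+[e_i,\varphi(e_j)]$ are:
\begin{itemize}
\item the $e_{n-j}$-coefficient of $\varphi(e_i)$, namely $2^{-(i-2)}\beta_s$;
\item the $e_{n-i}$-coefficient of $\varphi(e_j)$, namely $2^{-(j-2)}\beta_s$.
\end{itemize}
So $[e_i,e_j]=0$ forces
\[
\bigl((-1)^{j}2^{-(i-2)}+(-1)^{i+1}2^{-(j-2)}\bigr)\beta_s=0 .
\]
For $i\neq j$ the two summands have different absolute values, so no sign pattern can cancel them, and $\beta_s=0$. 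The pairs $(e_2,e_j)$ with $3\le j\le n-3$ already give $\beta_3=\dots=\beta_{n-3}=0$. The first of these is $(e_2,e_3)$ itself: besides $\beta_1e_4$, its $e_n$-component is $-\frac32\beta_{n-3}$.

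Concretely, in $m_{0,1}(7)$ take all $\alpha_i=0$ and only $\beta_4=1$. Then $\varphi(e_2)=e_4$ and $\varphi(e_3)=\frac12e_5$, and
\[
\tfrac12\bigl([\varphi(e_2),e_3]+[e_2,\varphi(e_3)]\bigr)=-\tfrac34e_7\neq0=\varphi([e_2,e_3]).
\]

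So the displayed family is strictly larger than the space of $\frac12$-derivations. The statement is true only as a necessary condition, and that is all the paper's proof establishes; it never examines these relations either. The converse you intend to verify is false.

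The remaining pairs impose nothing further: those with $i+j>n$ vanish identically, and those with $i+j=n$ need only $\beta_2=\alpha_1$. Hence the $\frac12$-derivations of $m_{0,1}(n)$ are exactly the displayed maps with $\beta_3=\dots=\beta_{n-3}=0$. The free parameters are $\alpha_1,\dots,\alpha_n,\beta_{n-2},\beta_{n-1},\beta_n$.

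You should either stop after the necessity argument, or replace your last step by this computation and state the corrected description.
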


\begin{proof}
    The proof for the algebra $m_0(n)$ is identical to the initial proof up to the 
basis element $e_{n-1}$. Due to the product 
$[e_l, e_{2k-l+1}] = (-1)^{l+1} e_{2k+1},$
only the coefficient preceding the basis element $e_n$ undergoes a change. 
In the general case, $\varphi(e_k)$ is given by:

$$\varphi(e_k)=\frac{2^{k-2}-1}{2^{k-2}}\alpha_1 e_k+\frac{1}{2^{k-2}}\sum\limits_{i=2}^{n-k+2}\beta_{i}e_{i+k-2}+\frac{1}{2}(-1)^{k+1}a_{n-k+1}e_n, \  3\leq k\leq n-1$$

Furthermore, Theorem~\eqref{THM: halfder m01n} differs from Theorem~\eqref{THM: halfder m0n} in the case where $\beta_2 = \alpha_1$. 
The proof for the condition $\beta_2 = \alpha_1$ is provided as follows:

In the current case, we consider the generation of $\varphi(e_n)$ in the following two ways:
\begin{enumerate}
    \item $\varphi(e_n) = \varphi([e_1, e_{n-1}])$,
    \item $\varphi(e_n) = -\varphi([e_2, e_{n-2}])$.
\end{enumerate}

\begin{equation}\label{m01n e_n type1}
\begin{aligned}
    \varphi(e_n) = \varphi([e_1, e_{n-1}])=\frac{1}{2}([\varphi(e_1),e_{n-1}]+[e_1,\varphi(e_{n-1})]) \\ =\frac{1}{2}([\sum\limits_{i=1}^{n}\alpha_ie_i,e_{n-1}]+[e_1,\frac{2^{n-3}-1}{2^{n-3}}\alpha_1 e_{n-1}+\frac{1}{2^{n-3}}\sum\limits_{i=2}^{3}\beta_{i}e_{i+n-3}+\frac{1}{2}a_{2}e_n])  = \frac{(2^{n-2}-1)\alpha_1+\beta_2}{2^{n -2}} e_n
\end{aligned}
\end{equation}
by second way

\begin{equation}\label{m01n e_n type2}
\begin{aligned}
    \varphi(e_n) = -\varphi([e_2, e_{n-2}])=-\frac{1}{2}([\varphi(e_2),e_{n-2}]+[e_2,\varphi(e_{n-2})]) \\ =-\frac{1}{2}([\sum\limits_{i=1}^{n}\beta_ie_i,e_{n-2}]+[e_2,\frac{2^{n-4}-1}{2^{n-4}}\alpha_1 e_{n-2}+\frac{1}{2^{n-4}}\sum\limits_{i=2}^{4}\beta_{i}e_{i+n-4}-\frac{1}{2}a_{3}e_n])  = \frac{(2^{n-4}-1)\alpha_1+(2^{n-4}+1)\beta_2}{2^{n -3}} e_n
\end{aligned}
\end{equation}

Equating the coefficients of the basis element $e_n$ in equations \eqref{m01n e_n type1} and \eqref{m01n e_n type2}, we obtain the following:

$$\frac{(2^{n-2}-1)\alpha_1+\beta_2}{2^{n -2}}=\frac{(2^{n-4}-1)\alpha_1+(2^{n-4}+1)\beta_2}{2^{n -3} }  \Longrightarrow \beta_2=\alpha_1  $$

 \end{proof}

\subsection{\texorpdfstring{$\frac{1}{2}$-derivation of the algebra $m_{0,2}(2k+2)$}{1/2 derivations of the algebra m02}.}\label{subsec6}

\begin{Th}\label{THM: halfder m02n} Any $\frac{1}{2}$-derivations of the algebra $m_{0,2}(n)$
has the form

\begin{equation}\label{halfder: m02n}
\begin{cases}
\varphi(e_1)= \alpha_1e_1+ \sum\limits_{i=3}^{n}\alpha_ie_i, \\ \varphi(e_2)=\alpha_1 e_2+\sum\limits_{i=4}^{n}\beta_ie_i,, \\
\varphi(e_3)=\alpha_1e_3+\frac{1}{2}\sum\limits_{i=4}^{n-1}\beta_ie_{i+1}+\frac{1}{2}a_{n-3}e_{n-1}+\frac{1}{2}(\frac{n}{2}-2)a_{n-2}e_n    \\
\varphi(e_k)=\alpha_1 e_k+\frac{1}{2^{k-2}}\sum\limits_{i=4}^{n-k+2}\beta_{i}e_{i+k-2}+\frac{1}{2}(-1)^{k+1}a_{n-k}e_{n-1}+\frac{1}{2}(-1)^{k+1}(\frac{n+1}{2}-k)a_{n-k+1}e_n, \  4\leq k\leq n-2,\\
\varphi(e_{n-1})=\alpha_1 e_{n-1}, \\
\varphi(e_{n})=\alpha_1e_n
\end{cases}
\end{equation}
\end{Th}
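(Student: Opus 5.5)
We follow the same scheme as in the proofs of Theorems \ref{THM: halfder m0n} and \ref{THM: halfder m01n}. We write $\varphi(e_1)=\sum_{i=1}^n\alpha_ie_i$ and $\varphi(e_2)=\sum_{i=1}^n\beta_ie_i$, and note that $e_1,e_2$ generate $m_{0,2}(n)$, $n=2k+2$. The chain relations $[e_1,e_j]=e_{j+1}$ then determine every $\varphi(e_j)$, $j\ge 3$, recursively:
\[
\varphi(e_{j+1})=\tfrac12\bigl([\varphi(e_1),e_j]+[e_1,\varphi(e_j)]\bigr).
\]
The difference from $m_0(n)$ is that $[\varphi(e_1),e_j]$ now also picks up the extra products $[e_l,e_{2k-l+1}]=(-1)^{l+1}e_{2k+1}$ and $[e_j,e_{2k-j+2}]=(-1)^{j+1}(k-j+1)e_{2k+2}$. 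These contribute only to the coordinates of $e_{n-1}=e_{2k+1}$ and $e_n=e_{2k+2}$. For $j\le n-2$ I would run an induction on $j$, exactly as for $m_0(n)$, giving a closed formula
\[
\varphi(e_j)=\tfrac{2^{j-2}-1}{2^{j-2}}\alpha_1e_j+\tfrac{1}{2^{j-2}}\sum_{i}\beta_ie_{i+j-2}+(\text{corrections in } e_{n-1},e_n).
\]
The corrections come from $\alpha_{n-j}$ and $\alpha_{n-j+1}$ paired with $e_j$ through the two top-level brackets. Their coefficients are $\tfrac12(-1)^{j+1}$ and $\tfrac12(-1)^{j+1}(k-j+1)$, and $k-j+1=\tfrac{n}{2}-j$ up to the index normalisation used in the statement. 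These corrections are not halved again at the next step, because $[e_1,e_{n-1}]=e_n$ is the only way they propagate, and it sends them to $e_n$.

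Next I would collect all the constraints coming from relations other than the chain $[e_1,e_j]$. First, $[e_2,e_3]=0$ gives $\varphi([e_2,e_3])=0$. Expanding the right-hand side with $\varphi(e_2)$ and $\varphi(e_3)$, the coefficient of $e_4$ yields $\beta_1=0$. Second, the relation $[e_1,e_2]$ read against $[e_2,e_1]$ is automatic, so the real constraints are the remaining brackets. For $l\ge 2$ we use $[e_l,e_{n-1-l}]=\pm e_{n-1}$, computed both as $\pm\varphi([e_l,e_{n-1-l}])$ and through the chain formula for $\varphi(e_{n-1})$. Comparing the $e_{n-1}$-coefficients from $[e_1,e_{n-2}]$ and $[e_2,e_{n-3}]$ reproduces the computation in Theorem \ref{THM: halfder m01n} and forces $\beta_2=\alpha_1$. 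With $\beta_2=\alpha_1$ the diagonal coefficient becomes $\tfrac{2^{j-2}-1}{2^{j-2}}\alpha_1+\tfrac{1}{2^{j-2}}\alpha_1=\alpha_1$ for all $j$, which explains the uniform $\alpha_1e_k$ terms.

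The new feature compared with $m_{0,1}$ is the second top relation $[e_j,e_{n-j}]=(-1)^{j+1}(k-j+1)e_n$. In particular $[e_2,e_{n-2}]=(k-1)e_n$ must be compared with $\varphi(e_n)=\varphi([e_1,e_{n-1}])$. I would also use $\varphi([e_2,e_3])=0$ at the next-lower level: the coefficient of $e_5$ there, or equivalently the compatibility of the brackets $[e_2,e_j]$ that vanish for $j<n-3$, gives $\beta_3=0$, i.e.\ $\varphi(e_2)$ starts at $e_4$. Similarly, comparing the coefficient of $e_2$ in $\varphi$ applied to identities involving $[e_2,\cdot]$ should force $\alpha_2=0$, which is why $\varphi(e_1)$ lacks an $e_2$-term. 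My plan is to write the $e_{n-1}$- and $e_n$-components of $\varphi([e_l,e_{n-1-l}])$ and $\varphi([e_l,e_{n-l}])$ for general $l$ as linear forms in $\alpha_1,\alpha_2,\beta_2,\beta_3$, and to check that they yield exactly the linear conditions $\beta_1=\alpha_2=\beta_3=0$ and $\beta_2=\alpha_1$, with no further conditions. The last two lines of the statement then follow: $\varphi(e_{n-1})=\alpha_1e_{n-1}$ and $\varphi(e_n)=\alpha_1e_n$.

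The main obstacle is the bookkeeping of the $e_{n-1}$ and $e_n$ coordinates. Each $\varphi(e_j)$ receives correction terms from several sources: the chain recursion, the products $[e_l,e_{n-1-l}]$ and the products $[e_l,e_{n-l}]$ with their coefficients $k-l+1$. We must verify that the resulting system is consistent for every $l$ rather than only for $l=2$. I would first verify the formulas in the smallest case $k=3$ ($n=8$) by direct computation, to fix signs and the precise form of the factor $\bigl(\tfrac{n+1}{2}-k\bigr)$. I would then do the general inductive computation. Finally, I would check sufficiency by substituting the resulting $\varphi$ into \eqref{halfderiv1} on all pairs $(e_i,e_j)$. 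Since every $[\varphi(e_i),e_j]$ involves only shifts and top-degree terms, this check reduces to the same identities.
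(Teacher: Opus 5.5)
Your route is the paper's: generate everything from $\varphi(e_1),\varphi(e_2)$ through the chain $[e_1,e_j]=e_{j+1}$, and track the extra brackets, which only touch the $e_{n-1}$- and $e_n$-coordinates. You get $\beta_2=\alpha_1$ from the $e_{n-1}$-coordinate of $\varphi(e_{n-1})$ computed via $[e_1,e_{n-2}]$ and via $-[e_2,e_{n-3}]$; you do this directly, where the paper cites the $m_{0,1}$ case. You then kill $\beta_3,\alpha_2$ by comparing expressions for $\varphi(e_{n-1})$.

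\textbf{The sources you name for $\beta_3=0$ and $\alpha_2=0$ are empty.} Both sides of the defining identity lie in $\langle e_3,\dots,e_n\rangle$, so there is never an $e_2$-coordinate to compare. For $k\ge 3$ one has
\[
[\varphi(e_2),e_3]+[e_2,\varphi(e_3)]=\beta_1e_4-\tfrac32\beta_{n-4}e_{n-1}-\tfrac{3k-5}{2}\beta_{n-3}e_n ,
\]
so $\varphi([e_2,e_3])=0$ carries no $e_5$-information. In degree one the only nontrivial constraints live in the $e_{n-1}$- and $e_n$-coordinates.

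Two computations actually do the job. First, the $e_{n-1}$-coordinate of $\varphi([e_2,e_{n-4}])=0$ gives $(1-2^{6-n})\beta_3=0$. This is the $j=n-4$ instance of your ``vanishing $[e_2,e_j]$'' remark, and it is not equivalent to an $e_5$-comparison. Second, once $\beta_3=0$, the chain gives $\varphi(e_{n-2})\ni-\frac12\alpha_2e_{n-1}$. It also gives $\varphi(e_{n-1})$ an $e_n$-coordinate equal to $-\bigl(\frac14+\frac{k-1}{2}\bigr)\alpha_2$, whereas $-\varphi([e_2,e_{n-3}])$ has $e_n$-coordinate $0$. Hence $\alpha_2=0$. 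This is what the paper's three-way computation of $\varphi(e_{n-1})$ (via $l=1,2,3$, comparing $e_n$-coordinates) does, and what your ``plan'' paragraph would find if carried out.

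\textbf{The propagation remark is backwards.} You say the top corrections ``are not halved again'', but the $e_{n-1}$-term $\frac12(-1)^{j}\alpha_{n-j+1}$ of $\varphi(e_{j-1})$ is carried by $[e_1,\cdot]$, with the factor $\frac12$, into the $e_n$-coordinate of $\varphi(e_j)$. The direct term comes from $[e_{n-j+1},e_{j-1}]=(-1)^{j+1}(k+2-j)e_n$; your $(k-j+1)$ is the wrong structure constant. Together they give exactly $\frac12(-1)^{j+1}\bigl(\frac{n+1}{2}-j\bigr)\alpha_{n-j+1}$, so the half-integer in $\frac{n+1}{2}$ is precisely the propagated term. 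Note also the sign slip: $[e_2,e_{n-2}]=-(k-1)e_n$.

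\textbf{Your sufficiency claim is false.} You expect ``no further conditions'' and a closing sufficiency check, but the display above already forces $\beta_{n-4}=\beta_{n-3}=0$. For $n=8$ these are $\beta_4,\beta_5$, which the stated form leaves free. Degree by degree, the vanishing brackets $[e_2,e_m]=0$, read in the $e_{n-1}$- and $e_n$-coordinates, kill all of $\beta_4,\dots,\beta_{n-3}$. So the displayed form is necessary but not sufficient. The theorem, and the paper's proof, only assert necessity, so drop the converse. Your necessity argument survives, because $\beta_4=\dots=\beta_{n-3}=0$ is a special case of the form.
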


\begin{proof}

The proof of this case is identical to that of the algebra $m_0(n)$ up to the 
first $n-2$ basis elements. Due to the product 
$[e_j, e_{2k-j+2}] = (-1)^{j+1}(k-j+1)e_{2k+2},$
only the coefficient preceding the basis element $e_n$ undergoes a change. 
In the algebra $m_{0,1}(2k+1)$, the product 
$[e_l, e_{2k-l+1}] = (-1)^{l+1}e_{2k+1}$
affects only the coefficients preceding the basis element $e_n$, whereas for $m_{0,2}(2k+2)$, 
this product shifts to the coefficient preceding the basis element $e_{n-1}$. 
That is, the dimension of the algebra $m_{0,2}(2k+2)$ is one unit larger than that of the 
algebra $m_{0,1}(2k+1)$, and the principal minor of order $2k+1$ of the $1/2$-derivation matrix 
of $m_{0,2}(2k+2)$ coincides with the $1/2$-derivation matrix of $m_{0,1}(2k+1)$. 
According to Theorem~\eqref{THM: halfder m01n}, we have $\beta_2 = \alpha_1$. 
In the general case for $m_{0,2}(2k+2)$, $\varphi(e_k)$ is given by:

$$\varphi(e_k)=\alpha_1 e_k+\frac{1}{2^{k-2}}\sum\limits_{i=3}^{n-k+2}\beta_{i}e_{i+k-2}+\frac{1}{2}(-1)^{k+1}a_{n-k}e_{n-1}+\frac{1}{2}(-1)^{k+1}(\frac{n+1}{2}-k)a_{n-k+1}e_n , \  4\leq k\leq n-2$$

Furthermore, Theorem~\eqref{THM: halfder m02n} differs from Theorem~\eqref{THM: halfder m01n} in the case where $\beta_3=0 ;
\quad \alpha_2=0$. 
The proof for the condition $\beta_3=0 ;
\quad \alpha_2=0$ is provided as follows:

In the current case, we consider the generation of $\varphi(e_{n-1})$ in the following three ways:
\begin{enumerate}
    \item $\varphi(e_{n-1}) = \varphi([e_1, e_{n-2}])$,
    \item $\varphi(e_{n-1}) = -\varphi([e_2, e_{n-3}])$.
    \item $\varphi(e_{n-1}) =  \varphi([e_3, e_{n-4}])$.
\end{enumerate}

\begin{equation}\label{m02n e_n-1 type1}
\begin{aligned}
    \varphi(e_{n-1}) = \varphi([e_1, e_{n-2}])=\frac{1}{2}([\varphi(e_1),e_{n-2}]+[e_1,\varphi(e_{n-1})]) \\ =\frac{1}{2}([\sum\limits_{i=1}^{n}\alpha_ie_i,e_{n-1}]+[e_1,\alpha_1 e_{n-2}+\frac{1}{2^{n-4}}\sum\limits_{i=3}^{4}\beta_{i}e_{i+n-4}-\frac{1}{2}\alpha_{2}e_{n-1}+\frac{1}{4}(5-n)\alpha_3 e_n]) \\= \alpha_1 e_{n-1}+\Bigl(\frac{1}{2^{n-3}}\beta_3-\frac{1}{4}(n-3) \alpha_2 \Bigl)e_n
\end{aligned}
\end{equation}
by second way

\begin{equation}\label{m02n e_n-1 type2}
\begin{aligned}
    \varphi(e_{n-1}) = -\varphi([e_2, e_{n-3}])=-\frac{1}{2}([\varphi(e_2),e_{n-3}]+[e_2,\varphi(e_{n-3})]) \\ =-\frac{1}{2}([\sum\limits_{i=2}^{n}\beta_ie_i,e_{n-2}]+[e_2,\frac{2^{n-5}-1}{2^{n-5}}\alpha_1 e_{n-3}+\frac{1}{2^{n-5}}\sum\limits_{i=2}^{5}\beta_{i}e_{i+n-5}-\frac{1}{2}\alpha_{3}e_{n-1}+\frac{1}{2}(\frac{n+1}{2}-4)\alpha_3e_n]) \\ = \alpha_1 e_{n-1}+\Bigl(\frac{1}{2^{n-4}}(\frac{n}{2}-2)\beta_3-(\frac{n}{2}-3) \frac{\beta_3}{2} \Bigl)e_n
\end{aligned}
\end{equation}
next way
\begin{equation}\label{m02n e_n-1 type3}
\begin{aligned}
    \varphi(e_{n-1}) = \varphi([e_3, e_{n-4}])=\frac{1}{2}([\varphi(e_3),e_{n-4}]+[e_3,\varphi(e_{n-4})]) \\ =\frac{1}{2}([\alpha_1e_3+\frac{1}{2}\sum\limits_{i=3}^{n-1}\beta_ie_{i+1}+\frac{1}{2}a_{n-3}e_{n-1}+\frac{1}{2}(\frac{n}{2}-2)a_{n-2}e_n ,e_{n-4}]) \\ +\frac{1}{2}([e_3,\alpha_1 e_{n-4}+\frac{1}{2^{n-6}}\sum\limits_{i=3}^{6}\beta_{i}e_{i+n-6}-\frac{1}{2}\alpha_{4}e_{n-1}+\frac{1}{2}(\frac{n+1}{2}-5)\alpha_5e_n]) \\ = \alpha_1 e_{n-1}+\Bigl(\frac{1}{2^{n-5}}(\frac{n}{2}-3)\beta_3-(\frac{n}{2}-4) \frac{\beta_3}{4} \Bigl)e_n
\end{aligned}
\end{equation}

Equating the coefficients of the basis element $e_{n-1}$ in equations \eqref{m02n e_n-1 type1} , \eqref{m02n e_n-1 type2} \eqref{m02n e_n-1 type3} and, we obtain the following:

$$\frac{1}{2^{n-4}}\beta_3-\frac{1}{4}(n-3) \alpha_2 =\frac{1}{2^{n-4}}(\frac{n}{2}-2)\beta_3-(\frac{n}{2}-3) \frac{\beta_3}{2} =\frac{1}{2^{n-5}}(\frac{n}{2}-3)\beta_3-(\frac{n}{2}-4) \frac{\beta_3}{4}    $$
$$\Longrightarrow  \beta_3=0 ;
\quad \alpha_2=0$$

so

$$\varphi(e_{n-1})=\alpha_1 e_{n-1}$$

 \end{proof}

\subsection{\texorpdfstring{$\frac{1}{2}$-derivation of the algebra $m_{0,3}(2k+3)$}{1/2 derivations of the algebra m03}.}\label{subsec7}

\begin{Th}\label{THM: halfder m03n} Any $\frac{1}{2}$-derivation of the algebra $m_{0,3}(n)$
has the form

\begin{equation}\label{halfder: m03n}
\begin{cases}
\varphi(e_1)=\alpha_1e_1+\sum\limits_{i=4}^{n}\alpha_ie_i, \\ \varphi(e_2)=\alpha_1 e_2+\sum\limits_{i=5}^{n}\beta_ie_i,, \\ 
\varphi(e_3)=\alpha_1e_3+\frac{1}{2}\sum\limits_{i=5}^{n-1}\beta_ie_{i+1}+\frac{1}{2}a_{n-4}e_{n-2}+\frac{1}{2}(\frac{n-1}{2}-2)a_{n-3}e_{n-1} ,  \\
\varphi(e_4)=\alpha_1e_4+\frac{1}{2}\sum\limits_{i=5}^{n-2}\beta_ie_{i+2}-\frac{1}{2}a_{n-5}e_{n-2}-\frac{1}{2}(\frac{n-1}{2}-3)a_{n-4}e_{n-1}+(\frac{n-5}{2})a_{n-3}e_n ,  \\
\varphi(e_k)=\alpha_1 e_k+\frac{1}{2^{k-2}}\sum\limits_{i=5}^{n-k+2}\beta_{i}e_{i+k-2}+\frac{1}{2}(-1)^{k+1}a_{n-k-1}e_{n-2}+ \frac{1}{2}(-1)^{k+1} \Bigl(\frac{n}{2}-k  \Bigl) a_{n-k}e_{n-1} \\ \quad +\frac{1}{2}(-1)^{k}\Bigl( (k-3)\frac{n-3}{2}-\frac{(k-2)^2}{2}+\frac{n-2}{4}  \Bigl) a_{n-k+1}e_{n}, \  5\leq k\leq n-3,\\
\varphi(e_{n-2})=\alpha_1 e_{n-2}, \\
\varphi(e_{n-1})=\alpha_1 e_{n-1}, \\
\varphi(e_{n})=\alpha_1e_n
\end{cases}
\end{equation}
\end{Th}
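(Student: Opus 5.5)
The proof follows the same pattern as Theorems~\ref{THM: halfder m0n}, \ref{THM: halfder m01n} and \ref{THM: halfder m02n}. We write $\varphi(e_1)=\sum_{i=1}^{n}\alpha_ie_i$ and $\varphi(e_2)=\sum_{i=1}^{n}\beta_ie_i$. Since $e_1,e_2$ generate $m_{0,3}(2k+3)$, the map $\varphi$ is determined by these two images: apply the $\frac12$-derivation identity to $[e_1,e_{j}]=e_{j+1}$ to obtain $\varphi(e_{j+1})=\frac12\bigl([\varphi(e_1),e_j]+[e_1,\varphi(e_j)]\bigr)$ recursively. The algebra $m_{0,3}(2k+3)$ agrees with $m_0(n)$ in degrees below $n-2=2k+1$. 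So, exactly as in the proof for $m_0(n)$, we get $\beta_1=0$ from $\varphi([e_2,e_3])=0$, and the coefficients of $\varphi(e_k)$ on $e_k,\dots,e_{n-3}$ are $\frac{2^{k-2}-1}{2^{k-2}}\alpha_1$ and $\frac{1}{2^{k-2}}\beta_i$. The only new contributions come from the extra brackets $[\varphi(e_1),e_k]$ landing in $e_{2k+1},e_{2k+2},e_{2k+3}$. Tracking the three tables of structure constants gives, for each $k$, correction terms in $e_{n-2},e_{n-1},e_n$ proportional to $\alpha_{n-k-1},\alpha_{n-k},\alpha_{n-k+1}$. These are propagated by induction on $k$, the inductive step being the same one-line computation with $[e_1,\cdot]$ as before, now carrying the three top coefficients along. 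Theorem~\ref{THM: halfder m02n} already supplies the formulas for the $e_{n-2}$ and $e_{n-1}$ components, since the principal minor of order $2k+2$ is the $m_{0,2}(2k+2)$ matrix. Only the $e_n$ component, with the coefficient $(-1)^m\bigl((m-2)k-\frac{(m-2)(m-1)}{2}\bigr)$ rewritten in terms of $n=2k+3$, has to be computed afresh.

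Next, we impose the consistency relations that are not automatic from the recursion. Each element $e_{n-2}$, $e_{n-1}$, $e_n$ is a bracket in several ways: $e_{n-2}=[e_1,e_{n-3}]=-[e_2,e_{n-4}]=\dots$, $e_{n-1}=[e_1,e_{n-2}]=\pm\frac{1}{k-1}[e_2,e_{n-3}]=\dots$, and $e_n=[e_1,e_{n-1}]=c^{-1}[e_3,e_{n-3}]=\dots$. Evaluating $\varphi$ along two or three of these presentations and comparing coefficients gives linear equations in $\alpha_1,\alpha_2,\alpha_3,\beta_2,\beta_3,\beta_4$. From the $m_{0,1}$ and $m_{0,2}$ layers we already inherit $\beta_2=\alpha_1$ and $\alpha_2=\beta_3=0$ (Theorems~\ref{THM: halfder m01n}, \ref{THM: halfder m02n}). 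Comparing the $e_n$ coefficient of $\varphi(e_n)$ computed via $[e_1,e_{n-1}]$ and via $[e_3,e_{n-3}]$, and likewise via $[e_2,e_{n-2}]$, should force $\alpha_3=0$ and $\beta_4=0$. These are exactly the vanishings visible in the statement, where $\varphi(e_1)$ starts at $\alpha_4e_4$ and $\varphi(e_2)$ at $\beta_5e_5$. The same comparisons collapse $\varphi(e_{n-2})$, $\varphi(e_{n-1})$ and $\varphi(e_n)$ to $\alpha_1$ times the identity. Finally, one checks that the remaining brackets $[e_i,e_j]$ with $i,j\ge2$ impose no further conditions, because both sides involve only the already-determined top coefficients.

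The main obstacle is the bookkeeping of the $e_n$ coefficient in $\varphi(e_k)$ for $5\le k\le n-3$. It receives contributions from three sources: the $e_{n-1}$ term of $\varphi(e_{k-1})$ pushed up by $[e_1,\cdot]$, the $m_{0,3}$ bracket $[\alpha_{n-k+1}e_{n-k+1},e_k]$, and the halving at each step. These must assemble into the stated quadratic expression $(k-3)\frac{n-3}{2}-\frac{(k-2)^2}{2}+\frac{n-2}{4}$. We would first verify the claimed formula directly for small $k$ (say $k=5,6$ with $n=9,11$), then prove it by induction, using the identity relating consecutive values of $(m-2)k-\frac{(m-2)(m-1)}{2}$ to absorb the new bracket term. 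The second delicate point is making sure the overdetermined system at $e_n$ really forces $\alpha_3=\beta_4=0$ for every $k\ge3$ and does not degenerate for a particular $n$. For this we would check that the relevant $2\times2$ determinant is a nonzero function of $n$.
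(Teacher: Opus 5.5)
Your outline is the paper's argument: generic $\varphi(e_1),\varphi(e_2)$, recursion along $[e_1,\cdot]$, then comparison of several presentations of the top basis vectors. Like the paper's proof, it never actually examines the vanishing brackets, and that is where it breaks. Your closing claim that the relations $[e_i,e_j]=0$ with $i,j\ge 2$ ``impose no further conditions'' is false. In fact the displayed form cannot be proved at all, because it is not correct.

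Work one degree at a time: each homogeneous component of a $\frac12$-derivation is again a $\frac12$-derivation. In $m_{0,3}(2k+3)$ one has $[e_2,e_{n-2}]=0$, since the coefficient $(m-2)k-\frac{(m-2)(m-1)}{2}$ vanishes at $m=2$. On the other hand, $[e_i,e_{n-i}]\neq 0$ for $3\le i\le n-3$. For $5\le i\le n-3$ we also have $[e_2,e_{n-i}]=0$, and the $e_n$-component of $[\varphi(e_2),e_{n-i}]+[e_2,\varphi(e_{n-i})]$ is just $\beta_i[e_i,e_{n-i}]$. Hence $\beta_5=\dots=\beta_{n-3}=0$.

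For instance, in $m_{0,3}(9)$ take the displayed form with $\beta_5=1$ and all other parameters zero. Then $\varphi(e_2)=e_5$ and $\varphi(e_4)=\frac12e_7$, so
$\frac12\bigl([e_5,e_4]+[e_2,\frac12e_7]\bigr)=-\frac32e_9\neq 0=\varphi([e_2,e_4])$.
So the $\beta_i$ are not ``already determined''; they are exactly what these brackets constrain, and the displayed family is too large.

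The other inclusion fails as well, so no bookkeeping rescues the statement. Apply your own recursion step $\varphi(e_4)=\frac12([\varphi(e_1),e_3]+[e_1,\varphi(e_3)])$ to the displayed $\varphi(e_3)$, using $[e_{n-5},e_3]=-e_{n-2}$, $[e_{n-4},e_3]=-(k-2)e_{n-1}$ and $[e_{n-3},e_3]=(k-1)e_n$. It gives:
\begin{itemize}
\item the factor $\frac14$ rather than $\frac12$ in front of the $\beta$-sum;
\item $\frac{8-n}{4}\alpha_{n-4}$ rather than $\frac{7-n}{4}\alpha_{n-4}$;
\item $\frac{3(n-5)}{8}\alpha_{n-3}$ rather than $\frac{n-5}{2}\alpha_{n-3}$.
\end{itemize}
Concretely, in $m_{0,3}(9)$ consider the map $e_1\mapsto e_5$, $e_2\mapsto 0$, $e_3\mapsto\frac12e_7$, $e_4\mapsto-\frac14e_8$, $e_5\mapsto-\frac{13}{8}e_9$, and $e_j\mapsto 0$ for $j\ge 6$. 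It satisfies the identity on every pair of basis vectors. Matching $\varphi(e_1),\varphi(e_2)$ forces $\alpha_5=1$ and all other parameters zero, and then the displayed form gives $\varphi(e_4)=-\frac12e_8$, not $-\frac14e_8$.

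Carried out degree by degree, the computation gives the following. Degrees $1$ and $2$ vanish, and $\varphi(e_2)=\alpha_1e_2+\beta_{n-2}e_{n-2}+\beta_{n-1}e_{n-1}+\beta_ne_n$. The $\frac12$-derivations form an $(n+1)$-dimensional space with free parameters $\alpha_1,\alpha_4,\dots,\alpha_n,\beta_{n-2},\beta_{n-1},\beta_n$; all other images are fixed by the recursion.

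Three smaller points:
\begin{itemize}
\item The $e_n$-coefficient of $\varphi(e_n)$ only sees degree-$0$ data, so comparing presentations of $e_n$ cannot force $\alpha_3=\beta_4=0$. These are degree-$2$ parameters, visible e.g.\ in the $e_n$-coefficient of $\varphi(e_{n-2})$, which is what the paper compares.
\item Here $e_n$ is not a multiple of $[e_2,e_{n-2}]$, which vanishes.
\item The printed $m_{0,1}$ and $m_{0,2}$ formulas you plan to import were obtained the same way and should not be used as black boxes. For example, in $m_{0,1}(7)$ the relation $[e_2,e_4]=0$ forces $\beta_3=0$, which the printed formula leaves free.
\end{itemize}
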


\begin{proof}

We proceed with the proof of this theorem by employing the method established in Theorem~\eqref{THM: halfder m02n}

In the general case for $m_{0,3}(n)$, $\varphi(e_k)$ is given by:

$$\varphi(e_k)=\alpha_1 e_k+\frac{1}{2^{k-2}}\sum\limits_{i=4}^{n-k+2}\beta_{i}e_{i+k-2}+\frac{1}{2}(-1)^{k+1}a_{n-k-1}e_{n-2}+\frac{1}{2}(-1)^{k+1}(\frac{n}{2}-k)a_{n-k}e_{n-1}\  $$
$$+\frac{1}{2}(-1)^k\Bigl((k-3)\frac{n-3}{2}-\frac{(k-2)^2}{2}+ \frac{n-2}{4}\Bigl)\alpha_{n-k+1}e_n,  \quad 5\leq k\leq n-3$$

in the case where $\beta_4=0 ;
\quad \alpha_3=0$. 
The proof for the condition $\beta_4=0 ;
\quad \alpha_3=0$ is provided as follows:

In the current case, we consider the generation of $\varphi(e_{n-2})$ in the following three ways:
\begin{enumerate}
    \item $\varphi(e_{n-2}) = \varphi([e_1, e_{n-3}])$,
    \item $\varphi(e_{n-2}) = -\varphi([e_2, e_{n-4}])$,
    \item $\varphi(e_{n-2}) = \varphi([e_3, e_{n-5}])$.
\end{enumerate}

\begin{equation}\label{m03n e_n-2 type1}
\begin{aligned}
    \varphi(e_{n-2}) = \varphi([e_1, e_{n-3}])=\frac{1}{2}([\varphi(e_1),e_{n-3}]+[e_1,\varphi(e_{n-3})]) \\ =\frac{1}{2}([\sum\limits_{i=1}^{n}\alpha_ie_i,e_{n-1}]+[e_1,\alpha_1 e_{n-3}+\frac{1}{2^{n-5}}\sum\limits_{i=4}^{5}\beta_{i}e_{i+n-5}-\frac{1}{2}\alpha_{3}(3-\frac{n}{2})e_{n-1}+\frac{1}{2}(\frac{3n-16}{4})\alpha_4 e_n]) \\= \alpha_1 e_{n-2}+\Bigl(\frac{1}{2^{n-4}}\beta_4-\frac{1}{8}(n-4) \alpha_3 \Bigl)e_n
\end{aligned}
\end{equation}
by second way

\begin{equation}\label{m03n e_n-2 type2}
\begin{aligned}
    \varphi(e_{n-2}) = -\varphi([e_2, e_{n-4}])=-\frac{1}{2}([\varphi(e_2),e_{n-4}]+[e_2,\varphi(e_{n-4})]) \\ =-\frac{1}{2}([\alpha_1e_2+\sum\limits_{i=4}^{n}\beta_ie_i,e_{n-4}]+[e_2,\alpha_1 e_{n-4}+\frac{1}{2^{n-6}}\sum\limits_{i=4}^{6}\beta_{i}e_{i+n-6}+\frac{1}{2}\alpha_{3}e_{n-2}-\frac{1}{2}(\frac{n}{2}-4)\alpha_4e_{n-1}-(\frac{5n-32}{8})\alpha_3e_n]) \\ = \alpha_1 e_{n-2}+\frac{\beta_4}{2}(n-6) e_n
\end{aligned}
\end{equation}
next way
\begin{equation}\label{m03n e_n-2 type3}
\begin{aligned}
    \varphi(e_{n-2}) = \varphi([e_3, e_{n-5}])=\frac{1}{2}([\varphi(e_3),e_{n-5}]+[e_3,\varphi(e_{n-4})]) \\ =\frac{1}{2}([\alpha_1e_3+\frac{1}{2}\sum\limits_{i=3}^{n-1}\beta_ie_{i+1}+\frac{1}{2}a_{n-4}e_{n-2}+\frac{1}{2}(\frac{n-1}{2}-2)a_{n-3}e_n ,e_{n-5}]) \\ +\frac{1}{2}([e_3,\alpha_1 e_{n-5}+\frac{1}{2^{n-7}}\sum\limits_{i=4}^{7}\beta_{i}e_{i+n-7}-\frac{1}{2}\alpha_{4}e_{n-2}+\frac{1}{2}(\frac{n}{2}-5)\alpha_5e_{n-1}+(\frac{7n-52}{8})\alpha_4e_n]) \\ = \alpha_1 e_{n-2}-(\frac{3n-21}{8}+\frac{n-5}{2^{n-5}})\beta_4e_n
\end{aligned}
\end{equation}
Equating the coefficients of the basis element $e_n$ in equations \eqref{m03n e_n-2 type1} , \eqref{m03n e_n-2 type2} and \eqref{m03n e_n-2 type3}, we obtain the following:

$$\frac{1}{2^{n-4}}\beta_4-\frac{1}{8}(n-4) \alpha_3=-\frac{\beta_4}{2}(n-6)=-(\frac{3n-21}{8}+\frac{n-5}{2^{n-5}})\beta_4  \Longrightarrow \beta_4=\alpha_3=0  $$
so 

$$\varphi(e_{n-2})=\alpha_1 e_{n-2}$$

 \end{proof}

\subsection{\texorpdfstring{$\frac{1}{2}$-derivation of the one-parameter filiform Lie algebra}{1/2 derivations of the one-parameter filiform Lie algebra}.}\label{subsec8}

\begin{Th}\label{THM: halfder g7} The $\frac{1}{2}$-derivation of $g_{7,\alpha}$ ($\alpha \neq -2$)is given as follows:

\begin{equation}\label{halfder: g7}
\begin{cases}
\varphi(e_1)=\sum\limits_{i=1}^{7}\alpha_ie_i, \\ \varphi(e_2)=\alpha_1 e_2 + 4\alpha_2(2+\alpha)e_3 +\sum\limits_{i=4}^{7}\beta_ie_i, \\
\varphi(e_3) = \alpha_1 e_3 + 2\alpha_2(2+\alpha)e_4 + \frac{1}{2}\left[\beta_4 - \alpha_3(2+\alpha)\right]e_5 + \frac{1}{2}\left[\beta_5 - \alpha_4(2+\alpha)\right]e_6 + \frac{1}{2}\left[\beta_6 - \alpha_5(1+\alpha)\right]e_7 \\
\varphi(e_4) = \alpha_1 e_4 + \frac{3}{2}\alpha_2(2+\alpha)e_5 + \frac{1}{4}\left[\beta_4 - \alpha_3(2+\alpha)\right]e_6 + \frac{1}{4}\left[\beta_5 - \alpha_4(4+\alpha)\right]e_7 \\
\varphi(e_5) = \alpha_1 e_5 + \alpha_2(2+\alpha)e_6 + \frac{1}{8}\left[\beta_4 + \alpha_3(6-\alpha)\right]e_7 \\
\varphi(e_6) = \alpha_1 e_6 + \alpha_2(3+2\alpha)e_7 \\
\varphi(e_7)=\alpha_1 e_7. 
\end{cases}
\end{equation}

\end{Th}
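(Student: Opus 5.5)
We follow the scheme used for $m_2(n)$ in Theorem~\ref{THM: halfder m2n}. Since $e_1,e_2$ generate $\mathfrak{g}_{7,\alpha}$, a $\frac12$-derivation is determined by the two general vectors
\[
\varphi(e_1)=\sum_{i=1}^{7}\alpha_ie_i,\qquad \varphi(e_2)=\sum_{i=1}^{7}\beta_ie_i .
\]
First, we generate $\varphi(e_3),\dots,\varphi(e_7)$ recursively from $\varphi(e_{k+1})=\varphi([e_1,e_k])=\frac12([\varphi(e_1),e_k]+[e_1,\varphi(e_k)])$ for $k=2,\dots,6$. Each step is a direct computation with the brackets of Table~\ref{tab:filiform}. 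The extra products $[e_2,e_3]=(2+\alpha)e_5$, $[e_2,e_4]=(2+\alpha)e_6$, $[e_2,e_5]=(1+\alpha)e_7$ and $[e_3,e_4]=e_7$ feed the $\alpha_2,\alpha_3,\alpha_4,\alpha_5$ terms into higher coordinates. This produces candidate formulas for $\varphi(e_k)$ that are linear in the $\alpha_i,\beta_i$ and still contain the parameters $\beta_1,\beta_2,\beta_3$, which are not yet fixed.

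Second, we impose the remaining defining identities, namely those for pairs $(e_i,e_j)$ with $i,j\ge 2$. The relevant relations are:
\begin{itemize}
\item $[e_2,e_3]=(2+\alpha)e_5$, which yields a second expression for $\varphi(e_5)$;
\item $[e_2,e_4]=(2+\alpha)e_6$, which yields a second expression for $\varphi(e_6)$;
\item $[e_2,e_5]=(1+\alpha)e_7$ and $[e_3,e_4]=e_7$, which yield further expressions for $\varphi(e_7)$;
\item the vanishing products $[e_2,e_6]=[e_3,e_5]=0$ and the like, whose images must vanish.
\end{itemize}
We compare coefficients basis element by basis element, exactly as in the $e_4,e_5,e_6$ comparison for $m_2(n)$.

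From the $e_4$-coefficient of the two expressions for $\varphi(e_5)$ we expect $\beta_1(2+\alpha)=0$. This is where the hypothesis $\alpha\neq-2$ enters: it allows us to conclude $\beta_1=0$. The $e_5$-coefficient should then force $\beta_2=\alpha_1$, mirroring the $m_2(n)$ case. The $e_6$-coefficient should give $\beta_3=4(2+\alpha)\alpha_2$. Substituting these back into the recursion yields the stated diagonal $\alpha_1$ and the superdiagonal coefficients $2\alpha_2(2+\alpha)$, $\frac32\alpha_2(2+\alpha)$, $\alpha_2(2+\alpha)$ and $\alpha_2(3+2\alpha)$.

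Third, we check that the relations coming from $[e_2,e_4]$, $[e_2,e_5]$, $[e_3,e_4]$ and the zero products impose no further conditions on $\alpha_3,\dots,\alpha_7,\beta_4,\dots,\beta_7$. These coordinates must remain free, and they appear in the boundary entries listed in the statement, such as $\frac12[\beta_4-\alpha_3(2+\alpha)]$. To confirm that every such map really is a $\frac12$-derivation, we verify the identity on all basis pairs $(e_i,e_j)$, $i<j$, $i+j\le 7$. Pairs with $i+j>7$ are automatic, because both sides land in degrees above $7$ once we show that $\varphi$ is upper triangular with diagonal $\alpha_1$.

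The main obstacle is the bookkeeping of the $e_7$-coordinate. Three different routes reach it: $[e_1,e_6]$, $[e_2,e_5]$ and $[e_3,e_4]$. Each carries different $\alpha$-dependent structure constants, such as $(1+\alpha)$ versus $(2+\alpha)$. The stated coefficients $\frac14[\beta_5-\alpha_4(4+\alpha)]$ in $\varphi(e_4)$ and $\frac18[\beta_4+\alpha_3(6-\alpha)]$ in $\varphi(e_5)$ must come out consistently from all three routes. We would first compute $\varphi(e_7)$ along each route symbolically and check that the resulting linear equations are identities in $\alpha_3,\alpha_4,\beta_4,\beta_5$, rather than new constraints. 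If a genuine constraint appears for particular values of $\alpha$, a case split on those values would be needed.
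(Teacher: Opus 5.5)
Your scheme (generate $\varphi(e_3),\dots,\varphi(e_7)$ from the $[e_1,e_k]$ recursion, then impose the remaining brackets) is the natural one, and the paper states this theorem without proof, so there is nothing to compare it with. But the plan cannot succeed, because the statement is false. The computation contradicts what you predict in your second and third steps.

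With $\varphi(e_1)=\sum\alpha_ie_i$ and $\varphi(e_2)=\sum\beta_ie_i$, the recursion gives the $e_6$- and $e_7$-coefficients of $\varphi(e_5)$ as $\frac{3}{4}(2+\alpha)\alpha_2+\frac{1}{8}\beta_3$ and $\frac{1}{8}[\beta_4+(2-\alpha)\alpha_3]$. So the $(6-\alpha)$ in the statement already contradicts its own $\varphi(e_4)$. Compare $(2+\alpha)\varphi(e_5)$ with $\frac{1}{2}([\varphi(e_2),e_3]+[e_2,\varphi(e_3)])$ in the coordinates $e_4,\dots,e_7$, and then use the $e_7$-coordinate of the $[e_2,e_4]$ relation. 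This gives
\[
\beta_1=0,\quad \beta_2=\alpha_1,\quad \beta_3=6(2+\alpha)\alpha_2,\quad (4-\alpha)\beta_4+(\alpha+2)(\alpha+4)\alpha_3=0,\quad (\alpha-8)\alpha_2=0 .
\]
Consequences:
\begin{itemize}
\item $\beta_3$ is $6(2+\alpha)\alpha_2$, not $4(2+\alpha)\alpha_2$.
\item The $e_7$-coordinate of $[e_2,e_3]$, which your list omits, ties $\beta_4$ to $\alpha_3$.
\item $[e_2,e_4]$ forces $\alpha_2=0$ unless $\alpha=8$.
\end{itemize}
These constraints hold for every $\alpha$, not just isolated values, so no case split rescues the displayed formula. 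It has $11$ free parameters, whereas the actual space has dimension $9$ for $\alpha\neq-2,8$.

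Concrete counterexamples at $\alpha=0$:
\begin{itemize}
\item Take $\alpha_3=1$ and all other parameters $0$ in the statement. Then $\varphi(e_1)=e_3$, $\varphi(e_4)=-\frac{1}{2}e_6$ and $\varphi(e_5)=\frac{3}{4}e_7$, yet $\frac{1}{2}([\varphi(e_1),e_4]+[e_1,\varphi(e_4)])=\frac{1}{4}e_7$.
\item Even after correcting to $\varphi(e_5)=\frac{1}{4}e_7$, the $[e_2,e_3]$ identity gives $\frac{1}{2}e_7$ on the left and $-\frac{1}{2}e_7$ on the right.
\item Taking $\alpha_2=1$ fails on $[e_1,e_4]$: $2e_6$ versus $\frac{5}{2}e_6$.
\end{itemize}

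Two smaller remarks. First, the $e_4$-coefficient gives $\frac{1}{2}\beta_1=0$ directly, with no factor $2+\alpha$. The hypothesis $\alpha\neq-2$ is used only at $e_5$ and $e_6$.

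Second, the ``three routes to $e_7$'' bookkeeping disappears if you use the grading. Every $\frac{1}{2}$-derivation splits into homogeneous components of degree $k$, each determined by $(\alpha_{k+1},\beta_{k+2})$. For a degree-$k$ component, only pairs $(e_i,e_j)$ with $i+j+k\le 7$ impose conditions:
\begin{itemize}
\item degrees $-1$ and $0$ give $\beta_1=0$ and $\beta_2=\alpha_1$;
\item degree $1$ gives the two $\alpha_2$-conditions;
\item degree $2$ gives the single relation between $\alpha_3$ and $\beta_4$;
\item degrees $3$ to $6$ (parameters $\alpha_4,\dots,\alpha_7,\beta_5,\beta_6,\beta_7$) are unconstrained.
\end{itemize}
What your method actually establishes is this corrected description, with an extra degree-one solution at $\alpha=8$ (where $\beta_3=60\alpha_2$), not the theorem as stated.
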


\begin{Th}\label{THM: halfder g8} The $\frac{1}{2}$-derivation of $g_{8,\alpha}$ ($\alpha \neq -2$) is given as follows:

\begin{equation}\label{halfder: g8}
\begin{cases}
\varphi(e_1)=\sum\limits_{i=1}^{8}\alpha_ie_i, \\ \varphi(e_2)=\alpha_1 e_2 + 4\alpha_2(2+\alpha)e_3 +\sum\limits_{i=4}^{8}\beta_ie_i, \\
\varphi(e_3) = \varphi(e_3) = \alpha_1 e_3 + 2\alpha_2(2+\alpha)e_4 + \frac{1}{2}\left[\beta_4 - \alpha_3(2+\alpha)\right]e_5 + \frac{1}{2}\left[\beta_5 - \alpha_4(2+\alpha)\right]e_6 + \\ \quad + \frac{1}{2}\left[\beta_6 - \alpha_5(1+\alpha)\right]e_7 + \frac{1}{2}\left[\beta_7 - \alpha\alpha_6\right]e_8 \\
\varphi(e_4) = \alpha_1 e_4 + \frac{3}{2}\alpha_2(2+\alpha)e_5 + \frac{1}{4}\left[\beta_4 - \alpha_3(2+\alpha)\right]e_6 + \frac{1}{4}\left[\beta_5 - \alpha_4(4+\alpha)\right]e_7 + \frac{1}{4}\left[\beta_6 - \alpha_5(3+\alpha)\right]e_8 \\
\varphi(e_5) = \alpha_1 e_5 + \alpha_2(2+\alpha)e_6 + \frac{1}{8}\left[\beta_4 + \alpha_3(6-\alpha)\right]e_7 + \frac{1}{8}\left[\beta_5 - \alpha_4(4+\alpha)\right]e_8 \\
\varphi(e_6) = \alpha_1 e_6 + \alpha_2(3+2\alpha)e_7 + \frac{1}{16}\left[\beta_4 + \alpha_3(14-\alpha)\right]e_8 \\
\varphi(e_7) = \alpha_1 e_7 + \alpha_2(1+2\alpha)e_8 \\
\varphi(e_8)=\alpha_1 e_8. 
\end{cases}
\end{equation}

\end{Th}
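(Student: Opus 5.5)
We follow the same scheme as in Theorem~\ref{THM: halfder m0n} and Theorem~\ref{THM: halfder m2n}, and build on the already established description of $g_{7,\alpha}$ in Theorem~\ref{THM: halfder g7}.

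\textbf{Generic ansatz.} Write
\[
\varphi(e_1)=\sum_{i=1}^{8}\alpha_ie_i,\qquad \varphi(e_2)=\sum_{i=1}^{8}\beta_ie_i .
\]
Since $e_1,e_2$ generate $g_{8,\alpha}$, every other $\varphi(e_k)$ is then determined through $\varphi(e_{k+1})=\varphi([e_1,e_k])$ for $2\le k\le 7$. The result is a recursive formula of the form
\[
\varphi(e_{k+1})=\tfrac12\bigl([\varphi(e_1),e_k]+[e_1,\varphi(e_k)]\bigr).
\]
In it, $[\varphi(e_1),e_k]$ contributes $\alpha_1e_{k+1}$ together with terms $\alpha_j[e_j,e_k]$ coming from the extra brackets involving $e_2,e_3,\dots$.

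\textbf{Step 1: lower block.} The algebra $g_{8,\alpha}/\langle e_8\rangle$ is exactly $g_{7,\alpha}$. Moreover, $\langle e_8\rangle=Ann$ is invariant under any $\frac12$-derivation, by the remark following Definition~\ref{halfderiv}. Hence $\varphi$ induces a $\frac12$-derivation of $g_{7,\alpha}$. By Theorem~\ref{THM: halfder g7}:
\begin{itemize}
\item $\beta_1=0$, $\beta_2=\alpha_1$ and $\beta_3=4\alpha_2(2+\alpha)$;
\item all coefficients of $\varphi(e_k)$ at $e_1,\dots,e_7$ are as listed in \eqref{halfder: g7}.
\end{itemize}
It remains to determine the $e_8$-coefficients of $\varphi(e_3),\dots,\varphi(e_8)$ and to check the new constraints.

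\textbf{Step 2: the $e_8$-components.} We compute them by the recursion $\varphi(e_{k+1})=\varphi([e_1,e_k])$ for $k=2,\dots,7$, now also keeping:
\begin{itemize}
\item the new brackets $[e_1,e_7]=e_8$, $[e_2,e_6]=\alpha e_8$, $[e_3,e_5]=e_8$;
\item the terms $\beta_8$ and $\alpha_7,\alpha_8$.
\end{itemize}
For example, $\varphi(e_3)$ acquires the term $\frac12(\beta_7-\alpha\alpha_6)e_8$ from $[e_1,\beta_7e_7]$ and $[\alpha_6e_6,e_2]=-\alpha\alpha_6e_8$. Similarly, $\varphi(e_4)$ gets $\frac14[\beta_6-\alpha_5(3+\alpha)]e_8$, where $[\alpha_5e_5,e_3]$ contributes $-\alpha_5e_8$. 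The formulas for $\varphi(e_5),\dots,\varphi(e_8)$ follow the same way, yielding \eqref{halfder: g8}.

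\textbf{Step 3: the consistency relations.} These come from the other brackets landing in $e_8$, namely $\varphi([e_2,e_6])$, $\varphi([e_3,e_5])$, and also $\varphi([e_2,e_5])$ and $\varphi([e_3,e_4])$. Each must agree with its value computed through $e_1$. Comparing $e_8$-coefficients gives linear equations in $\alpha_1,\alpha_2,\dots,\beta_4,\beta_5,\dots$. For instance, from $[e_3,e_4]=e_7$ one needs the $e_8$-coefficient of $\varphi(e_7)$, computed both ways, to coincide. We also check that $\varphi(e_8)=\alpha_1e_8$ from $[e_1,e_7]$ agrees with the values obtained from $[e_2,e_6]$ and $[e_3,e_5]$; this is where $\beta_2=\alpha_1$ and the exclusion $\alpha\neq-2$ are used again.

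\textbf{Main obstacle.} The main obstacle is this Step 3 bookkeeping. The $e_8$-coefficient of $\varphi(e_7)$ must be shown to reduce to $\alpha_2(1+2\alpha)$, and the $e_8$-coefficient of $\varphi(e_6)$ to $\frac1{16}[\beta_4+\alpha_3(14-\alpha)]$. Any new relation of the form $c(\alpha)\,\beta_j=c'(\alpha)\,\alpha_i$ must be either automatically satisfied or must fix only $\beta$'s that do not appear as free parameters. I would organize it by listing all pairs $(i,j)$ with $i+j\le 8$ and checking the identity degree by degree in the grading. Since $\varphi$ has a natural decomposition into homogeneous shifts (parameters $\alpha_{s+1},\beta_{s+2}$ shift degree by $s$), the check splits into independent small systems for each shift $s=0,\dots,6$. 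I would solve these one at a time, verifying that $\beta_7,\beta_8,\alpha_7,\alpha_8$ remain free.
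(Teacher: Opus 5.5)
Your plan cannot be completed, because the displayed formula is not correct. The first place it breaks is your Step~2 claim that the recursion $\varphi(e_{k+1})=\tfrac12([\varphi(e_1),e_k]+[e_1,\varphi(e_k)])$ ``yields'' it.

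\textbf{Step~2 fails.} Keep only $\alpha_2\neq0$. The displayed $\varphi(e_4)$ has $e_5$-coefficient $\tfrac32\alpha_2(2+\alpha)$. Since $[e_2,e_4]=(2+\alpha)e_6$, the recursion forces the $e_6$-coefficient of $\varphi(e_5)$ to be $\tfrac12\bigl(1+\tfrac32\bigr)\alpha_2(2+\alpha)=\tfrac54\alpha_2(2+\alpha)$, not the displayed $\alpha_2(2+\alpha)$. Likewise, using $[e_3,e_4]=e_7$, the $e_7$-coefficient of $\varphi(e_5)$ comes out as $\tfrac18[\beta_4+\alpha_3(2-\alpha)]$ rather than $\tfrac18[\beta_4+\alpha_3(6-\alpha)]$. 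The $\alpha_2$-terms of $\varphi(e_6)$ and $\varphi(e_7)$ are also inconsistent with the recursion. So the ``main obstacle'' you isolate, showing that the $e_8$-coefficient of $\varphi(e_7)$ is $\alpha_2(1+2\alpha)$, cannot be overcome.

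\textbf{Step~1 fails for the same reason.} These checks involve only $e_1,\dots,e_7$, so the $g_{7,\alpha}$ formula you import fails in exactly the same way. The reduction $g_{8,\alpha}/\langle e_8\rangle\cong g_{7,\alpha}$ is legitimate, but the input you feed into it is wrong.

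\textbf{Step~3 fails.} You expect every consistency relation to be automatic or to fix only $\beta$'s that are not free. That is precisely what fails, and your own shift decomposition exhibits it.

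In shift $s=1$, put $x=\alpha_2(2+\alpha)$ and let $c_k$ be the $e_{k+1}$-coefficient of $\varphi(e_k)$. The recursion gives $c_3=\beta_3/2$, $c_4=\tfrac12(x+c_3)$ and $c_5=\tfrac12(x+c_4)$. The bracket $[e_2,e_3]=(2+\alpha)e_5$ gives $c_5=c_3/2$. Hence $\beta_3=6\alpha_2(2+\alpha)$, not $4\alpha_2(2+\alpha)$; compare the coefficient $6\alpha_2$ in the $m_2(n)$ theorem. With this, $c_3=3x$, $c_4=2x$ and $c_5=\tfrac32x$. Then $[e_2,e_4]=(2+\alpha)e_6$ gives $c_6=(4+\alpha)\alpha_2$, whereas the recursion through $[e_1,e_5]$ gives $c_6=(2+\tfrac54\alpha)\alpha_2$. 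So $(\alpha-8)\alpha_2=0$.

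In shift $s=2$, set $y=\beta_4-(2+\alpha)\alpha_3$. The brackets $[e_2,e_3]$ and $[e_2,e_4]$ give $8(2+\alpha)\alpha_3+(4-\alpha)y=0$ and $12(2+\alpha)\alpha_3+(2-\alpha)y=0$. This system has determinant $4(\alpha-8)$.

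In shift $s=3$, the bracket $[e_2,e_3]$ gives $(6-\alpha)\beta_5+(\alpha+2)(\alpha-4)\alpha_4=0$. Only the shifts $s\ge4$ are unconstrained.

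\textbf{Conclusion.} For $\alpha\neq-2,8$ the $\tfrac12$-derivations of $g_{8,\alpha}$ form a $9$-dimensional space, not the $13$-parameter family displayed. It consists of the scalars, a one-parameter family in shift $3$, and the free parameters $\alpha_5,\dots,\alpha_8,\beta_6,\beta_7,\beta_8$. No bookkeeping will prove the statement as written. Carried out correctly, your degree-by-degree method refutes it.
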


\begin{Th}\label{THM: halfder g9} 
The \(\frac{1}{2}\)-derivation of \(g_{9,\alpha }\) (\(\alpha \neq -\frac{5}{2}\)) is given as follows:
\begin{equation}\label{halfder: g9} 
\begin{cases} \varphi(e_1)=\sum\limits_{i=1}^{9}\alpha_ie_i, \\  \varphi(e_2)=\alpha_1 e_2 + 4\alpha_2(2+\alpha)e_3 +\sum\limits_{i=4}^{9}\beta_ie_i, \\ \varphi(e_3) = \alpha_1 e_3 + 2\alpha_2(2+\alpha)e_4 + \frac{1}{2}\left[\beta_4 - \alpha_3(2+\alpha)\right]e_5 + \frac{1}{2}\left[\beta_5 - \alpha_4(2+\alpha)\right]e_6 + \\  \quad + \frac{1}{2}\left[\beta_6 - \alpha_5(1+\alpha)\right]e_7 + \frac{1}{2}\left[\beta_7 - \alpha\alpha_6\right]e_8 + \frac{1}{2}\left[\beta_8 - \frac{2\alpha^2+3\alpha-2}{2\alpha+5}\alpha_7\right]e_9, \\ \varphi(e_4) = \alpha_1 e_4 + \frac{3}{2}\alpha_2(2+\alpha)e_5 + \frac{1}{4}\left[\beta_4 - \alpha_3(2+\alpha)\right]e_6 + \frac{1}{4}\left[\beta_5 - \alpha_4(4+\alpha)\right]e_7 + \\  \quad + \frac{1}{4}\left[\beta_6 - \alpha_5(3+\alpha)\right]e_8 + \frac{1}{4}\left[\beta_7 - \alpha_6(1+2\alpha)\right]e_9, \\ \varphi(e_5) = \alpha_1 e_5 + \alpha_2(2+\alpha)e_6 + \frac{1}{8}\left[\beta_4 + \alpha_3(6-\alpha)\right]e_7 + \frac{1}{8}\left[\beta_5 - \alpha_4(4+\alpha)\right]e_8 + \\  \quad + \frac{1}{8}\left[\beta_6 + \alpha_5(10-3\alpha)\right]e_9, \\ \varphi(e_6) = \alpha_1 e_6 + \alpha_2(3+2\alpha)e_7 + \frac{1}{16}\left[\beta_4 + \alpha_3(14-\alpha)\right]e_8 + \frac{1}{16}\left[\beta_5 + \alpha_4(12-3\alpha)\right]e_9, \\ \varphi(e_7) = \alpha_1 e_7 + \alpha_2(1+2\alpha)e_8 + \frac{1}{32}\left[\beta_4 + \alpha_3(30-5\alpha)\right]e_9, \\ \varphi(e_8) = \alpha_1 e_8 + \alpha_2(1+4\alpha)e_9, \\ \varphi(e_9)=\alpha_1 e_9.  \end{cases}
\end{equation}
\end{Th}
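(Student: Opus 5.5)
I will follow the scheme already used for $m_0(n)$, $m_2(n)$ and especially for $g_{7,\alpha}$ and $g_{8,\alpha}$ (Theorems \ref{THM: halfder g7} and \ref{THM: halfder g8}). Since $g_{9,\alpha}$ is generated by $e_1,e_2$, a $\frac12$-derivation $\varphi$ is determined by $\varphi(e_1)=\sum_{i=1}^{9}\alpha_ie_i$ and $\varphi(e_2)=\sum_{i=1}^{9}\beta_ie_i$. First I will define $\varphi(e_{k+1})$ recursively by
\[
\varphi(e_{k+1})=\tfrac12\bigl([\varphi(e_1),e_k]+[e_1,\varphi(e_k)]\bigr),\qquad 2\le k\le 8 .
\]
This expresses every $\varphi(e_k)$ linearly in the $\alpha_i,\beta_i$. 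Next I will impose the remaining identities $\varphi([e_i,e_j])=\frac12([\varphi(e_i),e_j]+[e_i,\varphi(e_j)])$ for $2\le i<j$, using the brackets of $g_{9,\alpha}$. These are exactly the relations of $g_{8,\alpha}$ together with $[e_1,e_8]=e_9$ and the four new brackets with values in $e_9$. Each identity will give linear equations obtained by comparing coefficients of $e_{i+j-1},e_{i+j},\dots,e_9$.

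The key observation is that every bracket of $g_{9,\alpha}$ not involving $e_9$ coincides with one of $g_{8,\alpha}$. Hence the image of $\varphi$ modulo $\langle e_9\rangle$ is a $\frac12$-derivation of $g_{9,\alpha}/\langle e_9\rangle\cong g_{8,\alpha}$ (note that $e_9$ is central). Theorem \ref{THM: halfder g8} therefore gives all coefficients in front of $e_1,\dots,e_8$ at once. In particular it yields $\beta_1=0$, $\beta_2=\alpha_1$, $\beta_3=4\alpha_2(2+\alpha)$, and the stated $e_{\le 8}$ parts of $\varphi(e_3),\dots,\varphi(e_8)$. What remains is to determine the $e_9$-coefficients. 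In the recursion these come from $[e_1,\cdot]$ acting on the $e_8$-coefficient of $\varphi(e_k)$, and from $[\varphi(e_1),e_k]$ through $[e_{9-k},e_k]$. The latter contributes $\alpha_{9-k}$ times the structure constant of $[e_{9-k},e_k]$, namely $\frac{2\alpha^2+3\alpha-2}{2\alpha+5}$, $\frac{2\alpha+2}{2\alpha+5}$ or $\frac{3}{2\alpha+5}$. Carrying this through will produce the displayed $e_9$-terms, for instance $\frac12[\beta_8-\frac{2\alpha^2+3\alpha-2}{2\alpha+5}\alpha_7]$ in $\varphi(e_3)$.

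The main obstacle is the consistency of the new top-degree equations. These are the $e_9$-coefficients of the identities for $[e_2,e_7]$, $[e_3,e_6]$, $[e_4,e_5]$, and of the old brackets $[e_2,e_6]$, $[e_3,e_5]$, $[e_2,e_5]$, $[e_3,e_4]$, whose $\frac12$-derivation conditions now also have $e_9$-components. I will write these equations as a linear system in $\alpha_3,\dots,\alpha_8,\beta_4,\dots,\beta_9$ and $\alpha_2$, clear the denominators $2\alpha+5$, and check which relations are forced. I expect that only the $e_9$-coefficients of $\varphi(e_5),\dots,\varphi(e_8)$ get rewritten: the apparent free combinations collapse to the simplified forms $\frac18[\beta_6+\alpha_5(10-3\alpha)]$, $\frac1{16}[\beta_5+\alpha_4(12-3\alpha)]$, $\frac1{32}[\beta_4+\alpha_3(30-5\alpha)]$ and $\alpha_2(1+4\alpha)$. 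I also expect that no further restriction on $\beta_4,\dots,\beta_9$ arises beyond those already present for $g_{8,\alpha}$. The hypothesis $\alpha\neq-\frac52$ is used only so that the structure constants are defined. I would first verify the $[e_4,e_5]$ and $[e_3,e_6]$ equations, since they are the ones that bring in the $\frac{3}{2\alpha+5}$ and $\frac{2\alpha+2}{2\alpha+5}$ terms. Finally, $\varphi(e_9)=\frac12([\varphi(e_1),e_8]+[e_1,\varphi(e_8)])=\alpha_1e_9$ follows directly.
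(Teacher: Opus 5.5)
Your proposal cannot be completed, because the displayed formula is wrong in both directions. The paper states this theorem without proof, and the checks you postpone with ``I expect'' are exactly where it fails. Your reduction modulo the center is legitimate: $\langle e_9\rangle$ is the center, every $\frac12$-derivation preserves it, and $g_{9,\alpha}/\langle e_9\rangle\cong g_{8,\alpha}$. However, Theorem~\ref{THM: halfder g8} has the same errors, so the $e_{\le 8}$-part cannot be read off from it. Write $x_k$ for the $e_{k+1}$-coefficient of $\varphi(e_k)$. Your recursion gives $x_3=\frac12\beta_3$, $x_4=\frac12(2+\alpha)\alpha_2+\frac14\beta_3$ and $x_5=\frac34(2+\alpha)\alpha_2+\frac18\beta_3$. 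Comparing $e_6$-coefficients in the identity for $[e_2,e_3]=(2+\alpha)e_5$ gives instead $x_5=\frac12x_3=\frac14\beta_3$. Hence $\beta_3=6(2+\alpha)\alpha_2$, not $4(2+\alpha)\alpha_2$. With the displayed value, the recursion yields $x_5=\frac54(2+\alpha)\alpha_2$, not the displayed $(2+\alpha)\alpha_2$. Substituting $\beta_3=6(2+\alpha)\alpha_2$ and going one step further, $[e_1,e_5]$ gives $x_6=\frac{8+5\alpha}{4}\alpha_2$, while $[e_2,e_4]=(2+\alpha)e_6$ gives $x_6=(4+\alpha)\alpha_2$. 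So $(\alpha-8)\alpha_2=0$, and $\alpha_2$ is not a free parameter except at $\alpha=8$. That is exactly the value at which $g_{9,\alpha}$ is, after rescaling the basis, the truncated Witt algebra $[e_i,e_j]=(j-i)e_{i+j}$.

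Your expectation that no constraints beyond those of $g_{8,\alpha}$ appear also fails in higher degrees. For the degree-two parameters, the $[e_2,e_3]$ and $[e_2,e_4]$ identities give $(4-\alpha)\beta_4+(2+\alpha)(4+\alpha)\alpha_3=0$ and $(2-\alpha)\beta_4+(2+\alpha)(10+\alpha)\alpha_3=0$. The determinant is $4(2+\alpha)(8-\alpha)$, so $\alpha_3=\beta_4=0$ for $\alpha\neq-2,8$. Degree three vanishes in the same way. In degree four, the $[e_2,e_3]$ identity at $e_9$ ties $\beta_6$ to $\alpha_5$ (for example $11\beta_6=29\alpha_5$ at $\alpha=0$).

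Conversely, the display misses genuine $\frac12$-derivations. Since $[e_6,e_3]$ contributes $-\frac{2\alpha+2}{2\alpha+5}\alpha_6$, your own recursion gives the $e_9$-coefficient of $\varphi(e_4)$ as $\frac14\bigl[\beta_7-\frac{2\alpha^2+9\alpha+4}{2\alpha+5}\alpha_6\bigr]$, not $\frac14[\beta_7-(1+2\alpha)\alpha_6]$. So the degree-five map with $\varphi(e_1)=e_6$, $\varphi(e_2)=0$ is a $\frac12$-derivation not of the displayed form, for $\alpha\neq-1,-\frac12$. For all but finitely many $\alpha$, the space of $\frac12$-derivations is $9$-dimensional, not $15$-dimensional.

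The systematic way to see this is to split $\varphi$ into homogeneous components and solve degree by degree. Each component is again a $\frac12$-derivation, because the identity respects the grading. Note also that this analysis needs $\alpha\neq-2$, which Theorem~\ref{THM: halfder g8} assumes but the statement omits.
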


\begin{Th}\label{THM: halfder g10}
The \(\frac{1}{2}\)-derivation of \(g_{10,\alpha }\) (\(\alpha \neq -\frac{5}{2}\)) is given as follows:
\begin{equation}\label{halfder: g10}
\begin{cases} \varphi(e_1)=\sum\limits_{i=1}^{10}\alpha_ie_i, \\  \varphi(e_2)=\alpha_1 e_2 + 4\alpha_2(2+\alpha)e_3 +\sum\limits_{i=4}^{10}\beta_ie_i, \\ \varphi(e_3) = \alpha_1 e_3 + 2\alpha_2(2+\alpha)e_4 + \frac{1}{2}\left[\beta_4 - \alpha_3(2+\alpha)\right]e_5 + \frac{1}{2}\left[\beta_5 - \alpha_4(2+\alpha)\right]e_6 + \\  \quad + \frac{1}{2}\left[\beta_6 - \alpha_5(1+\alpha)\right]e_7 + \frac{1}{2}\left[\beta_7 - \alpha\alpha_6\right]e_8 + \frac{1}{2}\left[\beta_8 - \frac{2\alpha^2+3\alpha-2}{2\alpha+5}\alpha_7\right]e_9 + \frac{1}{2}\left[\beta_9 - \frac{2\alpha^2+\alpha-1}{2\alpha+5}\alpha_8\right]e_{10}, \\ \varphi(e_4) = \alpha_1 e_4 + \frac{3}{2}\alpha_2(2+\alpha)e_5 + \frac{1}{4}\left[\beta_4 - \alpha_3(2+\alpha)\right]e_6 + \frac{1}{4}\left[\beta_5 - \alpha_4(4+\alpha)\right]e_7 + \\  \quad + \frac{1}{4}\left[\beta_6 - \alpha_5(3+\alpha)\right]e_8 + \frac{1}{4}\left[\beta_7 - \alpha_6(1+2\alpha)\right]e_9 + \frac{1}{4}\left[\beta_8 - \alpha_7\frac{2\alpha^2+5\alpha+1}{2\alpha+5}\right]e_{10}, \\ \varphi(e_5) = \alpha_1 e_5 + \alpha_2(2+\alpha)e_6 + \frac{1}{8}\left[\beta_4 + \alpha_3(6-\alpha)\right]e_7 + \frac{1}{8}\left[\beta_5 - \alpha_4(4+\alpha)\right]e_8 + \\  \quad + \frac{1}{8}\left[\beta_6 + \alpha_5(10-3\alpha)\right]e_9 + \frac{1}{8}\left[\beta_7 + \alpha_6\frac{20-2\alpha^2-3\alpha}{2\alpha+5}\right]e_{10}, \\ \varphi(e_6) = \alpha_1 e_6 + \alpha_2(3+2\alpha)e_7 + \frac{1}{16}\left[\beta_4 + \alpha_3(14-\alpha)\right]e_8 + \frac{1}{16}\left[\beta_5 + \alpha_4(12-3\alpha)\right]e_9 + \\ \quad + \frac{1}{16}\left[\beta_6 + \alpha_5\frac{3\alpha^2-4\alpha-44}{2\alpha+5}\right]e_{10}, \\ \varphi(e_7) = \alpha_1 e_7 + \alpha_2(1+2\alpha)e_8 + \frac{1}{32}\left[\beta_4 + \alpha_3(30-5\alpha)\right]e_9 + \frac{1}{32}\left[\beta_5 + \alpha_4\frac{92+12\alpha-5\alpha^2}{2\alpha+5}\right]e_{10}, \\ \varphi(e_8) = \alpha_1 e_8 + \alpha_2(1+4\alpha)e_9 + \frac{1}{64}\left[\beta_4 + \alpha_3\frac{124+42\alpha-7\alpha^2}{2\alpha+5}\right]e_{10}, \\ \varphi(e_9) = \alpha_1 e_9 + \alpha_2(1+6\alpha)e_{10}, \\ \varphi(e_{10}) = \alpha_1 e_{10}. 
\end{cases} 
\end{equation}
\end{Th}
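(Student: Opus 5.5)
The statement is Theorem \ref{THM: halfder g10}, and I would prove it the same way Theorems \ref{THM: halfder g7}--\ref{THM: halfder g9} are obtained. The idea is to bootstrap from the lower-dimensional family, because $\mathfrak{g}_{10,\alpha}/\langle e_{10}\rangle\cong\mathfrak{g}_{9,\alpha}$.

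\textbf{Step 1: reduce to the $e_{10}$-column.} Write $\varphi(e_1)=\sum_{i=1}^{10}\alpha_ie_i$ and $\varphi(e_2)=\sum_{i=1}^{10}\beta_ie_i$. The remaining images are forced recursively:
\[
\varphi(e_{k+1})=\varphi([e_1,e_k])=\tfrac12\bigl([\varphi(e_1),e_k]+[e_1,\varphi(e_k)]\bigr),\qquad 2\le k\le 9 .
\]
So $\varphi$ is determined by the $\alpha_i$ and $\beta_i$. The ideal $\langle e_{10}\rangle=C^{9}\mathfrak{g}$ is invariant under any $\frac12$-derivation, as noted after Definition \ref{halfderiv}. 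Hence $\varphi$ induces a $\frac12$-derivation of the quotient $\mathfrak{g}_{9,\alpha}$. Theorem \ref{THM: halfder g9} then gives all coefficients of $\varphi(e_k)$ on $e_1,\dots,e_9$. In particular it gives $\beta_1=0$, $\beta_2=\alpha_1$ and $\beta_3=4\alpha_2(2+\alpha)$. Only the $e_{10}$-coordinates remain to be found.

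\textbf{Step 2: compute the $e_{10}$-coordinates.} I would run the recursion above, now keeping the extra brackets $[e_1,e_9]=e_{10}$, $[e_2,e_8]$, $[e_3,e_7]$ and $[e_4,e_6]$ with their constants from Table \ref{tab:filiform}. For $\varphi(e_3)$ the new term is $\frac12(\beta_9-c_{2,8}\alpha_8)e_{10}$, where $c_{2,8}=\frac{2\alpha^2+\alpha-1}{2\alpha+5}$. For $\varphi(e_k)$ with $k\ge4$, the new $e_{10}$-term is half of the following: the contribution $[\alpha_{10-k}e_{10-k},e_k]$, plus the $e_9$-coefficient of $\varphi(e_k)$. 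After substituting the $\mathfrak{g}_{9,\alpha}$ coefficients, this yields the stated expressions with denominators $2\alpha+5$.

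\textbf{Step 3: impose the remaining identities.} Next I would check every identity $\varphi([e_i,e_j])=\frac12([\varphi(e_i),e_j]+[e_i,\varphi(e_j)])$ with $2\le i<j$ and $i+j=10$, namely the pairs $(2,8)$, $(3,7)$ and $(4,6)$, comparing $e_{10}$-coefficients. Pairs with $i+j\le 9$ are already satisfied by Step 1. Brackets with $i+j=10$ land in $\langle e_{10}\rangle$, so only these three scalar equations are new. Each is linear in $\alpha_2,\ \alpha_3,\dots,\alpha_8,\ \beta_4,\dots,\beta_9$.

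I expect the main obstacle to be showing that these three equations hold identically, so that no new constraint arises. This is plausible because the structure constants of $\mathfrak{g}_{10,\alpha}$ are exactly those making the Jacobi identity hold. I would verify it by direct substitution, clearing the denominator $2\alpha+5$ (assumed nonzero). I would also double-check the coefficient of $\alpha_2$ in $\varphi(e_9)$, namely $\alpha_2(1+6\alpha)$, against the $(2,8)$ relation, since this is where $\alpha$-dependence is most delicate. Finally, I would confirm $\varphi(e_{10})=\alpha_1e_{10}$ from $[e_1,e_9]$: it equals $\frac12(\alpha_1+\alpha_1)e_{10}$ because the $e_{10}$-part of $\varphi(e_9)$ is $\alpha_1$.
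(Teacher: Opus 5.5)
\textbf{There is a genuine gap, and it cannot be repaired, because the statement is false.} The displayed maps are not the $\frac12$-derivations of $\mathfrak{g}_{10,\alpha}$. The paper states this theorem, and the ones for $\mathfrak{g}_{7,\alpha}$--$\mathfrak{g}_{9,\alpha}$, without proof, so there is no argument for your route to follow.

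\textbf{Step 1.} The reduction to the quotient by $\langle e_{10}\rangle$ is legitimate. However, it imports the $\mathfrak{g}_{9,\alpha}$ theorem, which has the same wrong low-degree coefficients, and your own $e_1$-recursion already contradicts them. Take $\alpha=0$ and let only $\alpha_2=1$ be nonzero. The stated map has $\varphi(e_1)=e_2$, $\varphi(e_4)=3e_5$ and $\varphi(e_5)=2e_6$. But
\[
\tfrac12\bigl([\varphi(e_1),e_4]+[e_1,\varphi(e_4)]\bigr)=\tfrac12(2e_6+3e_6)=\tfrac52e_6 .
\]
In general the recursion gives $\frac54\alpha_2(2+\alpha)$ and $\frac18[\beta_4+\alpha_3(2-\alpha)]$ as the $e_6$- and $e_7$-coefficients of $\varphi(e_5)$. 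The statement has $\alpha_2(2+\alpha)$ and $\frac18[\beta_4+\alpha_3(6-\alpha)]$.

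Moreover, the identities for pairs not containing $e_1$ genuinely cut the parameters down. Work degree by degree; each graded component of a $\frac12$-derivation is again one. In degree one, the pair $(e_2,e_3)$ forces $\beta_3=6(2+\alpha)\alpha_2$, not $4(2+\alpha)\alpha_2$. The pair $(e_2,e_4)$ then forces $(8-\alpha)\alpha_2=0$. So for $\alpha\neq 8,-2$ the parameter $\alpha_2$ does not survive at all.

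\textbf{Step 3.} Independently of this, your bookkeeping here is wrong. The quotient only gives the identities for pairs with $i+j\le 9$ modulo $e_{10}$. Their $e_{10}$-coordinates are new equations involving the higher-degree parameters, and you never impose them. For example, at $\alpha=0$ with only $\beta_7=1$ nonzero, the stated map has $\varphi(e_2)=e_7$, $\varphi(e_3)=\frac12e_8$, $\varphi(e_4)=\frac14e_9$ and $\varphi(e_5)=\frac18e_{10}$. Then
\[
\varphi([e_2,e_3])=2\varphi(e_5)=\tfrac14e_{10},\qquad \tfrac12\bigl([e_7,e_3]+\tfrac12[e_2,e_8]\bigr)=\tfrac12\bigl(\tfrac15-\tfrac1{10}\bigr)e_{10}=\tfrac1{20}e_{10}.
\]
So $\alpha_6$ and $\beta_7$ cannot both be free. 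Your expectation that the new equations ``hold identically because the structure constants satisfy Jacobi'' has no basis. The Jacobi identity does not imply the $\frac12$-derivation identities, and here they fail.

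A correct argument would parametrize each graded component $\varphi_d$ by $(\alpha_{d+1},\beta_{d+2})$ through the $e_1$-recursion. It would then impose every pair $(e_i,e_j)$ with $2\le i<j$ and $i+j+d\le 10$. This produces a space different from the one in the statement, and for generic $\alpha$ strictly smaller.
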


\begin{Th}\label{THM: halfder g11} 
The \(\frac{1}{2}\)-derivation of \(g_{11,\alpha }\) (\(\alpha \neq -\frac{5}{2}, -1, -3\)) is given as follows:
\begin{equation}\label{halfder: g11}
\begin{cases} \varphi(e_1)=\sum\limits_{i=1}^{11}\alpha_ie_i, \\  \varphi(e_2)=\alpha_1 e_2 + 4\alpha_2(2+\alpha)e_3 +\sum\limits_{i=4}^{11}\beta_ie_i, \\
\varphi(e_3) = \alpha_1 e_3 + 2\alpha_2(2+\alpha)e_4 + \frac{1}{2}\left[\beta_4 - \alpha_3(2+\alpha)\right]e_5 + \frac{1}{2}\left[\beta_5 - \alpha_4(2+\alpha)\right]e_6 + \\  \quad + \frac{1}{2}\left[\beta_6 - \alpha_5(1+\alpha)\right]e_7 + \frac{1}{2}\left[\beta_7 - \alpha\alpha_6\right]e_8 + \frac{1}{2}\left[\beta_8 - \frac{2\alpha^2+3\alpha-2}{2\alpha+5}\alpha_7\right]e_9 + \\ \quad + \frac{1}{2}\left[\beta_9 - \frac{2\alpha^2+\alpha-1}{2\alpha+5}\alpha_8\right]e_{10} + \frac{1}{2}\left[\beta_{10} - \frac{2\alpha^3+2\alpha^2+3}{2(\alpha^2+4\alpha+3)}\alpha_9\right]e_{11}, \\ \varphi(e_4) = \alpha_1 e_4 + \frac{3}{2}\alpha_2(2+\alpha)e_5 + \frac{1}{4}\left[\beta_4 - \alpha_3(2+\alpha)\right]e_6 + \frac{1}{4}\left[\beta_5 - \alpha_4(4+\alpha)\right]e_7 + \\  \quad + \frac{1}{4}\left[\beta_6 - \alpha_5(3+\alpha)\right]e_8 + \frac{1}{4}\left[\beta_7 - \alpha_6(1+2\alpha)\right]e_9 + \frac{1}{4}\left[\beta_8 - \alpha_7\frac{2\alpha^2+5\alpha+1}{2\alpha+5}\right]e_{10} + \\ \quad + \frac{1}{4}\left[\beta_9 - \alpha_8\frac{2\alpha^3+4\alpha^2+5\alpha+4}{2(\alpha^2+4\alpha+3)}\right]e_{11}, \\ \varphi(e_5) = \alpha_1 e_5 + \alpha_2(2+\alpha)e_6 + \frac{1}{8}\left[\beta_4 + \alpha_3(6-\alpha)\right]e_7 + \frac{1}{8}\left[\beta_5 - \alpha_4(4+\alpha)\right]e_8 + \\  \quad + \frac{1}{8}\left[\beta_6 + \alpha_5(10-3\alpha)\right]e_9 + \frac{1}{8}\left[\beta_7 + \alpha_6\frac{20-2\alpha^2-3\alpha}{2\alpha+5}\right]e_{10} + \\ \quad + \frac{1}{8}\left[\beta_8 + \alpha_7\frac{14\alpha^2+32\alpha+23-2\alpha^3}{2(\alpha^2+4\alpha+3)(2\alpha+5)}\right]e_{11}, \\ \varphi(e_6) = \alpha_1 e_6 + \alpha_2(3+2\alpha)e_7 + \frac{1}{16}\left[\beta_4 + \alpha_3(14-\alpha)\right]e_8 + \frac{1}{16}\left[\beta_5 + \alpha_4(12-3\alpha)\right]e_9 + \\  \quad + \frac{1}{16}\left[\beta_6 + \alpha_5\frac{3\alpha^2-4\alpha-44}{2\alpha+5}\right]e_{10} + \frac{1}{16}\left[\beta_7 + \alpha_6\frac{2\alpha^3-26\alpha^2-40\alpha+4}{2(\alpha^2+4\alpha+3)(2\alpha+5)}\right]e_{11}, \\ \varphi(e_7) = \alpha_1 e_7 + \alpha_2(1+2\alpha)e_8 + \frac{1}{32}\left[\beta_4 + \alpha_3(30-5\alpha)\right]e_9 + \frac{1}{32}\left[\beta_5 + \alpha_4\frac{92+12\alpha-5\alpha^2}{2\alpha+5}\right]e_{10} + \\ \quad + \frac{1}{32}\left[\beta_6 + \alpha_5\frac{232+4\alpha^2-32\alpha-5\alpha^3}{2(\alpha^2+4\alpha+3)(2\alpha+5)}\right]e_{11}, \\ 
\varphi(e_8) = \alpha_1 e_8 + \alpha_2(1+4\alpha)e_9 + \frac{1}{64}\left[\beta_4 + \alpha_3\frac{124+42\alpha-7\alpha^2}{2\alpha+5}\right]e_{10} + \\ \quad + \frac{1}{64}\left[\beta_5 + \alpha_4\frac{412+134\alpha+14\alpha^2-7\alpha^3}{2(\alpha^2+4\alpha+3)(2\alpha+5)}\right]e_{11}, \\ \varphi(e_9) = \alpha_1 e_9 + \alpha_2(1+6\alpha)e_{10} + \frac{1}{128}\left[\beta_4 + \alpha_3\frac{444+214\alpha-3\alpha^2-9\alpha^3}{2(\alpha^2+4\alpha+3)(2\alpha+5)}\right]e_{11}, \\ \varphi(e_{10}) = \alpha_1 e_{10} + \alpha_2(1+8\alpha)e_{11}, \\ 
\varphi(e_{11}) = \alpha_1 e_{11}. 
\end{cases} 
\end{equation}
\end{Th}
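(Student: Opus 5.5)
The plan is to follow the same scheme used for $\mathfrak{g}_{7,\alpha}$ through $\mathfrak{g}_{10,\alpha}$ (Theorems \ref{THM: halfder g7}--\ref{THM: halfder g10}), exploiting that $\mathfrak{g}_{11,\alpha}$ is generated by $e_1,e_2$ and that $\mathfrak{g}_{11,\alpha}/\langle e_{11}\rangle\cong\mathfrak{g}_{10,\alpha}$.

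\emph{Step 1: reduce to the generators.} Write $\varphi(e_1)=\sum_{i=1}^{11}\alpha_ie_i$ and $\varphi(e_2)=\sum_{i=1}^{11}\beta_ie_i$.

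\emph{Step 2: reuse the $\mathfrak{g}_{10,\alpha}$ result.} Since $\langle e_{11}\rangle$ is an ideal, and in fact the center, it is invariant under every $\frac12$-derivation, as noted after Definition \ref{halfderiv} for $Ann(\mathfrak{L})$. Hence $\varphi$ induces a $\frac12$-derivation of $\mathfrak{g}_{10,\alpha}$. By Theorem \ref{THM: halfder g10}, all coefficients of $\varphi(e_k)$ along $e_1,\dots,e_{10}$ are exactly as listed, including the relations $\beta_1=0$, $\beta_2=\alpha_1$, $\beta_3=4(2+\alpha)\alpha_2$. So only the $e_{11}$-coefficients of $\varphi(e_3),\dots,\varphi(e_{11})$ remain to be determined, together with any new constraints on the free parameters.

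\emph{Step 3: compute the $e_{11}$-column recursively.}
\begin{itemize}
\item $\varphi(e_3)=\frac12([\varphi(e_1),e_2]+[e_1,\varphi(e_2)])$. Its $e_{11}$-component is $\frac12(\beta_{10}-c_{2,9}\alpha_9)$, where $c_{2,9}=\frac{2\alpha^3+2\alpha^2+3}{2(\alpha^2+4\alpha+3)}$ is the new structure constant.
\item For $k\ge3$, $\varphi(e_{k+1})=\frac12([\varphi(e_1),e_k]+[e_1,\varphi(e_k)])$ gives the $e_{11}$-coefficient of $\varphi(e_{k+1})$. It equals half the $e_{10}$-coefficient of $\varphi(e_k)$, plus $\frac12\alpha_{j}c_{j,k}$ contributions coming from $[e_j,e_k]\in\mathfrak{g}_{11}$.
\end{itemize}
This produces the displayed rational functions, with the denominators $2(\alpha^2+4\alpha+3)(2\alpha+5)$ explaining the excluded values $\alpha\neq-\frac52,-1,-3$.

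\emph{Step 4: impose the remaining identities.} Check the $\frac12$-derivation identity on all pairs $(e_i,e_j)$ with $2\le i<j$ and $i+j=11$, namely $[e_2,e_9],[e_3,e_8],[e_4,e_7],[e_5,e_6]$, as well as pairs with $i+j<11$ whose $e_{11}$-components now appear. For example, $\varphi([e_2,e_3])=(2+\alpha)\varphi(e_5)$ must be compared at $e_{11}$, using the $e_{10}$- and $e_{9}$-components of $\varphi(e_2),\varphi(e_3)$ bracketed against $e_2$ or $e_3$. These are linear equations in the $\alpha_i,\beta_i$.

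The main obstacle is Step 4. One must verify that these equations either hold identically, given the $\mathfrak{g}_{10,\alpha}$ relations and the Jacobi-compatible structure constants, or force exactly the $\alpha_{11},\beta_{11}$-free pattern stated, with $\varphi(e_{10})=\alpha_1e_{10}+\alpha_2(1+8\alpha)e_{11}$ and $\varphi(e_{11})=\alpha_1e_{11}$. Here I would first check the $e_{11}$-coefficient of $\varphi(e_{11})$ obtained from $[e_1,e_{10}]$ against that obtained from $[e_2,e_9]$, since this is where a hidden constraint such as $\alpha_2=0$ or a relation on $\beta_4$ could appear, and would treat it by direct symbolic simplification.
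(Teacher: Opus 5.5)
Your plan cannot be completed, because the statement itself is false. The paper states it without proof, so there is no argument there to compare with.

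Your reduction modulo the center is legitimate. Since $\langle e_{11}\rangle=Ann(\mathfrak{g}_{11,\alpha})$ is invariant, $\varphi$ induces a $\frac12$-derivation of $\mathfrak{g}_{11,\alpha}/\langle e_{11}\rangle\cong\mathfrak{g}_{10,\alpha}$. The problems start after that:
\begin{itemize}
\item Step 2 imports the $\mathfrak{g}_{10,\alpha}$ theorem, which carries the same errors. Of the relations you list, $\beta_1=0$ and $\beta_2=\alpha_1$ are correct, but $\beta_3=4(2+\alpha)\alpha_2$ is not.
\item Step 3 asserts, without checking, that the recursion produces the displayed rational functions. It does not.
\end{itemize}
Take $\alpha=0$ (admissible), $\alpha_2=1$ and all other parameters zero. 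The displayed map has $\varphi(e_1)=e_2$, $\varphi(e_4)=3e_5$ and $\varphi(e_5)=2e_6$. But the identity on $(e_1,e_4)$ forces $\varphi(e_5)=\frac12([e_2,e_4]+[e_1,3e_5])=\frac52e_6$.

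Write $[e_i,e_j]=c_{i,j}e_{i+j}$. Two further coefficients are forced and disagree with the statement:
\begin{itemize}
\item The identity on $(e_1,e_4)$, applied to the displayed $\varphi(e_1)$ and $\varphi(e_4)$, forces the $e_7$-coefficient of $\varphi(e_5)$ to be $\frac18(\beta_4+(2-\alpha)\alpha_3)$. The statement gives $\frac18(\beta_4+(6-\alpha)\alpha_3)$.
\item The identity on $(e_1,e_3)$ forces the $e_{11}$-coefficient of $\varphi(e_4)$ to be $\frac14(\beta_9-(c_{2,8}+2c_{3,8})\alpha_8)$. At $\alpha=0$ this is $\frac14(\beta_9+\frac85\alpha_8)$, while the statement gives $\frac14(\beta_9-\frac23\alpha_8)$.
\end{itemize}
So the family in the statement is not even contained in the space of $\frac12$-derivations.

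Step 4, carried out honestly, produces exactly the hidden constraints you anticipated, and they refute the statement rather than prove it. Because the bracket is graded, each homogeneous component of $\varphi$ is again a $\frac12$-derivation, so you can work degree by degree.

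\emph{Degree one.} Here $e_i\mapsto\mu_ie_{i+1}$ with $\mu_1=\alpha_2$ and $\mu_2=\beta_3$.
\begin{itemize}
\item Comparing $\varphi(e_5)$ computed from $[e_1,e_4]$ and from $[e_2,e_3]$ gives $\beta_3=6(2+\alpha)\alpha_2$ for $\alpha\neq-2$, not $4(2+\alpha)\alpha_2$.
\item Comparing $\varphi(e_6)$ computed from $[e_1,e_5]$ and from $[e_2,e_4]$, using $c_{2,5}=1+\alpha$ and $c_{3,4}=1$, then gives $(\alpha-8)(2+\alpha)\alpha_2=0$.
\end{itemize}

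\emph{Degree two.} The same two comparisons, now using also $c_{3,5}=1$ and $c_{2,6}=\alpha$, give
$(4-\alpha)\beta_4+(2+\alpha)(4+\alpha)\alpha_3=0$ and $(2-\alpha)\beta_4+(2+\alpha)(10+\alpha)\alpha_3=0$. This system has determinant $4(2+\alpha)(8-\alpha)$.

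Hence for every admissible $\alpha\notin\{-2,8\}$ one must have $\alpha_2=\beta_3=\alpha_3=\beta_4=0$, whereas the statement leaves $\alpha_2,\alpha_3,\beta_4$ free. A correct treatment has to drop the reliance on the $\mathfrak{g}_{10,\alpha}$ result and recompute each graded component directly. That leads to a substantially smaller space than the one displayed.
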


\section{Local \texorpdfstring{$\frac{1}{2}$}{1/2} -derivation }\label{sec4}

\begin{defn}\label{defnlocal}
 A \textit{linear map} $\nabla$ \textit{is called a local $\frac{1}{2}$-derivation, if for any $x \in \mathfrak{L}$, there exists a $\frac{1}{2}$-derivation $\mathcal{D}_x : \mathfrak{L} \rightarrow \mathfrak{L}$ (depending on $x$) such that $\nabla(x) = \mathcal{D}_x(x)$. The set of all local $\frac{1}{2}$-derivations on $\mathfrak{L}$ we denote by $\mathrm{LocDer}_\frac{1}{2}(\mathfrak{L})$.}

\end{defn}

Let $B=(b_{i,j})$ be the matrix of $\nabla$ with respect to $\{e_1,\dots,e_n\}$. For $x=\sum\limits_{i=1}^n x_i e_i$, write $\bar{x}=(x_1,\dots,x_n)^T$. If $\mathcal{D}_x$ has matrix $A_x$, then the local condition is equivalent to the vector equation
\begin{equation}\label{systemlocal}
    B\bar{x}=A_x\bar{x}.
\end{equation}

\subsection{Local \texorpdfstring{$\frac{1}{2}$}{1/2}- derivation of the algebra \texorpdfstring{$m_0(n)$}{m(0n)}}\label{subsec9}

\begin{Th}\label{thm:local-m0}
A linear map $\nabla$ on the filiform Lie algebra $\mathfrak{m}_0(n)$ ($n \ge 3$) is a local $\frac{1}{2}$-derivation if and only if its matrix representation $M_{\nabla} = (b_{i,j})_{n \times n}$ with respect to the homogeneous basis $\{e_1, e_2, \dots, e_n\}$ is lower triangular and invertible, i.e.,
\begin{equation}\label{eq:local-matrix-form}
M_{\nabla_{m_0(n)}} = \begin{pmatrix}
b_{1,1} & 0 & 0 & 0 & \cdots & 0 & 0 \\
b_{2,1} & b_{2,2} & 0 & 0 & \cdots & 0 & 0 \\
b_{3,1} & b_{3,2} & b_{3,3} & 0 & \cdots & 0 & 0 \\
b_{4,1} & b_{4,2} & b_{4,3} & b_{4,4} & \cdots & 0 & 0 \\
\vdots & \vdots & \vdots & \vdots & \ddots & \vdots & 0 \\
b_{n-1,1} & b_{n-1,2} & b_{n-1,3} & b_{n-1,4} & \cdots & b_{n-1,n-1} & 0 \\
b_{n,1} & b_{n,2} & b_{n,3} & b_{n,4} & \cdots & b_{n,n-1} & b_{n,n}
\end{pmatrix},
\end{equation}
where $b_{i,j} \in \mathbb{C}$ .
\end{Th}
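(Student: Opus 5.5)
The plan is to prove both directions separately, using only the description of $\frac{1}{2}$-derivations of $\mathfrak{m}_0(n)$ from Theorem~\ref{THM: halfder m0n} and the vector equation \eqref{systemlocal}. One point should be flagged first. The invertibility requirement cannot be part of a genuine characterization, because the zero map is trivially a local $\frac{1}{2}$-derivation (take $\mathcal{D}_x=0$). So the argument will establish the equivalence with ``lower triangular'', and invertibility is regarded as a non-essential or misstated extra condition.

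\emph{Necessity.} Let $\nabla$ be local and fix $j$. Put $x=e_j$. Then $\nabla(e_j)=\mathcal{D}_{e_j}(e_j)$ for some $\frac{1}{2}$-derivation $\mathcal{D}_{e_j}$. By Theorem~\ref{THM: halfder m0n}, every $\frac{1}{2}$-derivation sends $e_j$ into $\mathrm{span}\{e_j,\dots,e_n\}$ for $j\ge 2$. This uses that $\varphi(e_2)$ has no $e_1$-component (the step $b_1=0$ in that proof) and the explicit formulas for $\varphi(e_k)$, $k\ge3$. Hence $b_{i,j}=0$ for $i<j$, i.e.\ $M_\nabla$ is lower triangular; for $j=1$ there is no restriction.

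\emph{Sufficiency.} Let $B$ be lower triangular and $x=\sum x_ie_i\neq0$. Let $m$ be the least index with $x_m\ne0$. Then $B\bar x\in\mathrm{span}\{e_m,\dots,e_n\}$, and we must choose parameters $\alpha_i,\beta_i$ of Theorem~\ref{THM: halfder m0n} with $A_x\bar x=B\bar x$. We split into three cases.
\begin{itemize}
\item Case $m=1$. Set all $\beta_i=0$ and $\alpha_1=0$. Then $A_x\bar x=x_1\sum_i\alpha_ie_i$ plus terms depending on no free $\alpha$, and these terms vanish here. Taking $\alpha_i=(B\bar x)_i/x_1$ solves the system.
\item Case $m=2$. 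Set $\alpha$'s to zero. The $e_{2+r}$-coefficient of $A_x\bar x$ equals $x_2\beta_{2+r}$ plus a combination of $\beta_2,\dots,\beta_{1+r}$ coming from $x_k\varphi(e_k)$, $k\ge3$, since $\beta_i$ appears in $\varphi(e_k)$ only at $e_{i+k-2}$. This is a triangular linear system in $\beta_2,\dots,\beta_n$ with diagonal $x_2\ne0$, so it is solvable by forward substitution.
\item Case $m\ge3$. The $e_{m+r}$-coefficient contains $\beta_{2+r}$ with coefficient $x_m/2^{m-2}\neq0$, plus earlier $\beta$'s. The last equation, at $e_n$, involves $\beta_{n-m+2}$; for $m=n$ it is $\frac{(2^{n-2}-1)\alpha_1+\beta_2}{2^{n-2}}x_n$. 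Again forward substitution works.
\end{itemize}
Thus each $x$ admits a suitable $\mathcal{D}_x$.

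The main obstacle is bookkeeping in the case $m\ge3$. One must verify from the formulas in Theorem~\ref{THM: halfder m0n} that the coefficient matrix in the unknowns $\beta_2,\dots,\beta_{n-m+2}$ is genuinely triangular with nonzero diagonal $x_m2^{-(m-2)}$. In particular, contributions from $x_k\varphi(e_k)$ with $k>m$ must involve only lower-index $\beta$'s, and the special form of $\varphi(e_n)$ must fit the same pattern. I would check this by writing the $e_{m+r}$-coefficient as $\sum_{k=m}^{m+r}x_k2^{-(k-2)}\beta_{m+r-k+2}$ (plus an $\alpha_1$ term, with $\alpha_1=0$). Its top term in the $\beta$-index comes from $k=m$.
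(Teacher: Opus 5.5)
Your route is the paper's own. Necessity comes from testing $\nabla$ on the basis vectors $e_j$. Sufficiency comes from splitting on the first nonzero coordinate $x_m$ and solving for the parameters by forward substitution. You are right to discard invertibility: the zero map is a local $\frac12$-derivation, the paper's claim $b_{k,k}\neq0$ is unsupported, and its sufficiency argument never uses it. Your Cases $m=2$ and $m\ge3$ are correct.

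Case $m=1$, however, fails as written. You fix $\alpha_1=0$ and then set $\alpha_i=(B\bar x)_i/x_1$ for all $i$, which for $i=1$ requires $\alpha_1=b_{1,1}$. Since $\varphi(e_1)$ is the only image with an $e_1$-component, the $e_1$-coordinate of $\mathcal{D}_x(x)$ is $\alpha_1x_1$. So $\alpha_1=b_{1,1}$ is forced when $x_1\neq0$, and with $\alpha_1=0$ the system has no solution once $b_{1,1}\neq0$. The fix is what the paper does: take $\alpha_1=b_{1,1}$ and all $\beta_i=0$. Then $\mathcal{D}_x(x)=x_1\sum_i\alpha_ie_i+b_{1,1}\sum_{k\ge3}(1-2^{2-k})x_ke_k$, and $\alpha_i=\bigl((B\bar x)_i-(1-2^{2-i})b_{1,1}x_i\bigr)/x_1$ for $2\le i\le n$ solves the system.

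A second problem is inherited from the classification theorem you cite. The fact that $\varphi(e_2)$ has no $e_1$-component comes from $0=\varphi([e_2,e_3])=\frac12[\varphi(e_2),e_3]=\frac12\beta_1[e_1,e_3]$. This forces $\beta_1=0$ only if $[e_1,e_3]=e_4\neq0$, i.e.\ $n\ge4$. For $n=3$ the algebra $m_0(3)$ is the Heisenberg algebra, $e_3$ is central, and $\beta_1$ is free. For example, the map $e_2\mapsto e_1$, $e_1,e_3\mapsto0$ is a $\frac12$-derivation, hence local, yet $b_{1,2}=1$. So your necessity argument, and with it the stated equivalence, needs $n\ge4$. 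For $n=3$ the local $\frac12$-derivations are exactly the maps with $b_{1,3}=b_{2,3}=0$.
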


\begin{proof}
Let $\nabla $ be an arbitrary local $\frac{1}{2}$-derivation on $\mathfrak{m}_0(n)$, and, let $ M_{\nabla} $ be the matrix of $ \nabla$. By the definition for
all $x = x_1e_1 + \cdot\cdot\cdot + x_ne_n \in L_n$ there exists an $\frac{1}{2}$-derivation $\mathcal{D}_x $ on $ \mathfrak{m}_0(n) $ with respect to the basis $ \{e_1, e_2, . . . , e_n\} $ of $m_0(n)$
$$\nabla(x)=\mathcal{D}_x(x).$$

By Theorem~\eqref{THM: halfder m0n}  the $\frac{1}{2}$-derivation $\mathcal{D}_x$ has the following matrix form:
\[
A_x = \begin{pmatrix}\label{A_x form matrix m0n}
\alpha_1^x & 0 & 0 & 0 & \cdots & 0 & 0 \\
\alpha_2^x & \beta_2^x & 0 & 0 & \cdots & 0 & 0 \\
\alpha_3^x & \beta_3^x & \frac{\alpha_1^x+ \beta_2^x}{2} & 0 & \cdots & 0 & 0  \\
\alpha_4^x & \beta_4^x & \frac{\beta_3^x}{2}  & \frac{3\alpha_1^x+\beta_2^x}{4} & \cdots & 0 & 0 \\
\vdots & \vdots & \vdots & \vdots & \ddots & \vdots & 0 \\
\alpha_{n-1}^x & \beta_{n-1}^x &  \frac{1}{2}\beta_{n-2}^x &  \frac{1}{4} \beta_{n-3}^x & \cdots & \frac{(2^{n-3}-1)\alpha^x_1+\beta^x_2}{2^{n -3}} & 0 \\
\alpha_n^x & \beta_n^x & \frac{1}{2} \beta_{n-1}^x & \frac{1}{4} \beta_{n-2}^x & \cdots & \frac{\beta^x_3}{2^{n -3}}  & (\frac{(2^{n-2}-1)\alpha^x_1+\beta^x_2}{2^{n -2}} 
\end{pmatrix}.
\]
By systematically evaluating the local $\frac{1}{2}$-derivation criterion \(\nabla(x) = \mathcal{D}_x(x)\)—or its equivalent matrix representation \(M_\nabla\bar{x} = A_x\bar{x}\), where \(\bar{x}\) denotes the coordinate vector of \(x\)—at the basis elements \(x = e_1, \dots, e_n\), we deduce that the matrix \(A = (b_{i,j})\) possesses a lower triangular structure (\(b_{i,j} = 0\) whenever \(i < j\)) featuring strictly non-zero diagonal entries (\(b_{k,k} \neq 0\) for all \(1 \le k \le n\)). This architectural characteristic is deeply rooted in the algebraic properties of the underlying complex field \(\mathbb{C}\), which serves as the base field for the algebra \(\mathfrak{m}_0(n)\). Consequently, extending the relation \(A\bar{x} = A_x\bar{x}\) to a generic element \(x = \sum\limits_{i=1}^n x_i e_i\) yields the subsequent system of algebraic equations:

\begin{equation}\label{local sysytem m0n}
    \sum\limits_{j=1}^{i}b_{i,j}x_j=\alpha_i^x x_1+\beta_i^x x_2+\sum\limits_{j=3}^{i-1}\frac{1}{2^{j-2}} \beta_{i-j+2}^xx_j +\frac{(2^{i-2}-1)\alpha_1^x+\beta_2^x}{2^{i-2}}x_i
\end{equation}

in terms of the variables \(\alpha_{i}^{x}\) $(1 \leq i \leq n)$ and  \(\beta_{i}^{x}\) $(2 \leq i \leq n)$.

We shall now demonstrate that the linear operator represented by the matrix $A$
 a local $\frac{1}{2}$-derivation. By definition, if for every element $x \in \mathfrak{m}_0(n)$
there exists a matrix $A_x$ of the form given in Theorem~\eqref{THM: halfder m0n} satisfying the identity
$$A_x \bar{x} = A \bar{x}$$
thereby confirming that the corresponding linear operator is indeed a local $\frac{1}{2}$-derivation. Alternately, the linear operator determined by \(A\) represents a local $\frac{1}{2}$-derivation provided that, for each \(x \in \mathfrak{m}_0(n)\), the system of linear equations~\eqref{local sysytem m0n} is solvable with respect to the variables

$\alpha_1^x, \alpha_2^x, \dots, \alpha_n^x, \beta_2^x, \dots, \beta_n^x $.

To establish this, we analyze
the following cases:

\begin{itemize}
    \item  In the case where $x_1 \neq 0$, we find $\alpha_1^x = b_{1,1}$, whereas for any $2 \le i \le n$, the remaining components $\alpha_{i}^{x}$ are explicitly given by:
$$\alpha_i^x = b_{i,1} + \frac{1}{x_1}\Bigl( (b_{i,2}-\beta_i^x)x_2+\sum\limits_{j=3}^{i-1} (b_{i,j} - \sum\limits_{j=3}^{i-1}\frac{1}{2^{j-2}} \beta_{i-j+2}^x) x_j +\bigl(b_{i,i}-\frac{(2^{i-2}-1)\alpha_1^x+\beta_2^x}{2^{i-2}}\bigl)x_i \Bigl),$$
where $2 \le i \le n$, and these equalities give a solution of the system of equations~\eqref{local sysytem m0n}. Here  $\beta_{2}^x,\beta_{3}^x, \cdots , \beta_{n}^x$ are defined arbitrary.[Since the linear system~\eqref{local sysytem m0n} comprises $n$ equations and $2n-1$ variables, it possesses $n-1$ degrees of freedom. Consequently, these specific parameters may be selected arbitrarily with respect to the remaining components to guarantee solvability.
]
\item  if $x_1=0$ and $x_2 \neq 0$, then $b_{2,2}=\beta_2^x$ for each $i$ satisfying $3 \le i \le n$, the remaining unknowns $\beta_i^x$ can be recursively determined by the following explicit formula:

$$\beta_i^x = b_{i,2} + \frac{1}{x_2}\Bigl( \sum\limits_{j=3}^{i-1} (b_{i,j} - \frac{1}{2^{j-2}} \beta_{i-j+2}^x) x_j +\bigl(b_{i,i}-\frac{(2^{i-2}-1)\alpha_1^x+\beta_2^x}{2^{i-2}}\bigl)x_i\Bigl),$$
give solution of the system of equation~\eqref{local sysytem m0n}. Here $\alpha_1^x$ is arbitrary value.

\item If $ x_1=x_2=\cdots= x_{m-1} = 0$ and $x_m \neq 0, $ then the following equalities, $m \le i \le n$

$$\sum\limits_{j=m}^{i}b_{i,j}x_j=\sum\limits_{j=m}^{i-1}\frac{1}{2^{j-2}} \beta_{i-j+2}^xx_j +\frac{(2^{i-2}-1)\alpha_1^x+\beta_2^x}{2^{i-2}}x_i$$
so

$$\beta_2^x=2^{m-2} b_{m,m}-(2^{m-2}-1)\alpha_1^x$$

$$\cdots\cdots$$

$$\beta_{i-m+2}^x= 2^{m-2} b_{i,m}+\frac{2^{m-2}}{x_m}\Bigl( \sum\limits_{j=3}^{i-1} (b_{i,j} - \frac{1}{2^{j-2}} \beta_{i-j+2}^x) x_j +\bigl(b_{i,i}-\frac{(2^{i-2}-1)\alpha_1^x+\beta_2^x}{2^{i-2}}\bigl)x_i\Bigl)$$

give solution of the system of equation~\eqref{local sysytem m0n}. Here $\alpha_1^x$ is arbitrary value.
\end{itemize}
Consequently, the system of linear equations~\eqref{local sysytem m0n} consistently possesses a solution
for every element $x \in m_0(n)$, which implies that the linear operator induced by
the matrix $A$ constitutes a local $\frac{1}{2}$-derivation. Owing to the fact that the local
$\frac{1}{2}$-derivation $\nabla$ was chosen arbitrarily and is pointwise defined by the
$\frac{1}{2}$- derivation as characterized in Theorem~\eqref{THM: halfder m0n}, it follows that the matrix
representation of any local $\frac{1}{2}$-derivation of the algebra $m_0(n)$ necessarily admits
a lower triangular form.
\end{proof}

\subsection{Local \texorpdfstring{$\frac{1}{2}$}{1/2}- derivation of the algebra \texorpdfstring{$m_2(n)$}{m(2n)}}\label{subsec10}
The following theorems can be proven similar to the proof of Theorem~\eqref{thm:local-m0}.

\begin{Th}\label{thm:local-m2n}
Let $\nabla$ be a linear operator on the filiform Lie algebra $\mathfrak{m}_2(n)$ with $n \ge 5$. Then, $\nabla$ acts as a local $\frac{1}{2}$- derivations if and only if its matrix $M_{\nabla} = (b_{i,j})_{n \times n}$ with respect to the homogeneous basis $\{e_1, e_2, \dots, e_n\}$ exhibits an invertible lower triangular configuration, that is,

\begin{equation}\label{eq:local-matrix-form m_2(n)}
M_{\nabla} = \begin{pmatrix}
b_{1,1} & 0 & 0 & 0 & \cdots & 0 & 0 \\
b_{2,1} & b_{2,2} & 0 & 0 & \cdots & 0 & 0 \\
b_{3,1} & b_{3,2} & b_{3,3} & 0 & \cdots & 0 & 0 \\
b_{4,1} & b_{4,2} & b_{4,3} & b_{4,4} & \cdots & 0 & 0 \\
b_{5,1} & b_{5,2} & 0 & b_{5,4}& \cdots & 0 & 0 \\
\vdots & \vdots & \vdots & \vdots & \ddots & \vdots & 0 \\
b_{n-2,1} & b_{n-2,2} & 0 & 0 & \cdots & 0& 0 \\
b_{n-1,1} & b_{n-1,2} & b_{n-1,3} & 0 & \cdots & b_{n-1,n-1} & 0 \\
b_{n,1} & b_{n,2} & b_{n,3} & b_{n,4} & \cdots & b_{n,n-1} & b_{n,n}
\end{pmatrix},
\end{equation}
where $b_{i,j} \in \mathbb{C}$.
\end{Th}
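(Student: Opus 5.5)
We follow the scheme of the proof of Theorem~\ref{thm:local-m0}, now using the description of $\frac12$-derivations of $\mathfrak{m}_2(n)$ from Theorem~\ref{THM: halfder m2n}. The argument has two directions: a necessity part, where testing the local condition on suitable vectors forces the zero pattern of $M_\nabla$, and a sufficiency part, where for every $x$ we solve $M_\nabla\bar x=A_x\bar x$ for the parameters of $A_x$.

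\emph{Necessity.} Let $\nabla$ be a local $\frac12$-derivation and write $\nabla(e_j)=\mathcal D_{e_j}(e_j)$. The $j$-th column of $M_\nabla$ is then the $j$-th column of some matrix of the form in Theorem~\ref{THM: halfder m2n}, so $M_\nabla$ inherits the column supports of those matrices.
\begin{itemize}
\item Every $\frac12$-derivation of $\mathfrak m_2(n)$ maps $e_j$ into $\langle e_j,\dots,e_n\rangle$. This gives lower triangularity.
\item For $5\le j\le n-1$ the column of $A$ is supported only on $e_j,e_{j+1}$. This forces $b_{i,j}=0$ for $i\ge j+2$ in those columns.
\item For $j=3,4$ the column is supported on $e_j$, $e_{j+1}$ and the boundary entries coming from $\beta_{n-2}-\alpha_{n-3}$ and $\beta_{n-1}-\alpha_{n-2}$. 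This produces exactly the zeros in rows $5,\dots,n-2$ displayed in \eqref{eq:local-matrix-form m_2(n)}.
\end{itemize}
Reading the displayed matrix literally, we would state the precise support column by column and regard the displayed matrix as shorthand for that support. The hypothesis ``invertible'' cannot follow from locality, since $\nabla=0$ is local. We would therefore read it as describing the generic form, or else drop it, noting this in the write-up.

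\emph{Sufficiency.} Given $M_\nabla$ with the above support, and given $x=\sum x_ie_i$, we seek parameters $\alpha^x_1,\dots,\alpha^x_n,\beta^x_{n-2},\beta^x_{n-1},\beta^x_n$ with $M_\nabla\bar x=A_x\bar x$. We split according to the first nonzero coordinate $x_m$.
\begin{itemize}
\item If $x_1\neq0$, the $i$-th row equation contains $\alpha^x_i x_1$ with nonzero coefficient. We fix $\alpha^x_1=b_{1,1}$ and solve successively for $\alpha^x_2,\dots,\alpha^x_n$; the $\beta$'s remain free.
\item If $x_1=0$ and $x_2\neq0$, the unknowns entering through $\varphi(e_2)$ are $\alpha^x_1,6\alpha^x_2,\alpha^x_3,\dots,\alpha^x_{n-4},\beta^x_{n-2},\beta^x_{n-1},\beta^x_n$. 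These appear triangularly in rows $2,\dots,n$ with coefficient $x_2$, so we solve row by row downward.
\item If $x_1=x_2=0$ and $x_m\ne0$ with $m\ge3$, only the diagonal parameter $\alpha^x_1$, the parameter $\alpha^x_2$ (entering row $m+1$) and the boundary combinations $\beta_{n-2}-\alpha_{n-3}$, $\beta_{n-1}-\alpha_{n-2}$ are available. This is the reason the support of $M_\nabla$ must be restricted in columns $m\ge3$.
\end{itemize}

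\emph{Main obstacle.} The hardest step is the last case. There the diagonal entries $b_{m,m}$ for different $m$ must all be matched by the single parameter $\alpha^x_1$, since $A_x$ has constant diagonal $\alpha_1$. We would first try testing $x=e_3+e_4$, or more generally $x=e_m+e_{m'}$, to see whether locality forces $b_{m,m}=b_{m',m'}$. If it does, the theorem as displayed, which has independent diagonal entries, needs that additional constraint. If instead the mixed $e_{m+1}$ terms can absorb the discrepancy, we show that by solving the $2\times2$ system for $(\alpha^x_1,\alpha^x_2)$ from rows $m$ and $m+1$. The remaining lower rows are then absorbed by the boundary parameters $\beta^x_{n-2},\beta^x_{n-1}$, whose coefficients $x_3/2$ and $x_4/4$ are nonzero when $m\in\{3,4\}$. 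For $m\ge5$ the support of $M_\nabla$ in the lower rows vanishes, so those rows hold trivially.
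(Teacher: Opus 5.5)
Your necessity analysis (reading off column supports from $\nabla(e_j)=\mathcal{D}_{e_j}(e_j)$) is correct, and so is your remark that invertibility cannot be necessary, since $\nabla=0$ is local. The gap is the case where the first nonzero coordinate is $x_m$ with $m\ge3$. You leave it as an either/or, and it resolves against the statement, so the sufficiency direction cannot be completed.

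Take $n\ge7$ and $\nabla=\mathrm{id}+E_{5,5}$, i.e.\ $\nabla(e_5)=2e_5$ and $\nabla(e_i)=e_i$ for $i\neq5$. This invertible diagonal matrix has the displayed form. Let $x=e_3+e_5$. By Theorem~\ref{THM: halfder m2n}, every $\frac12$-derivation $D$ satisfies $D(e_3)\in\alpha_1e_3+\langle e_4,e_{n-1},e_n\rangle$ and $D(e_5)\in\alpha_1e_5+\langle e_6\rangle$. Since $n-1\ge6$, the $e_3$- and $e_5$-coordinates of $D(x)$ are both $\alpha_1$, while those of $\nabla(x)$ are $1$ and $2$. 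So $\nabla$ is not a local $\frac12$-derivation.

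Your proposed $2\times2$ system for $(\alpha_1^x,\alpha_2^x)$ cannot rescue this. When $x_{m-1}=0$, row $m$ contains no $\alpha_2^x$ and fixes $\alpha_1^x=b_{m,m}$ outright. Moreover, $\alpha_2^x$ enters row $j+1$ only through $x_j$. Hence for $x=e_m+e_{m'}$ with $m'\ge m+2$, away from the boundary rows of columns $3,4$, row $m'$ forces $b_{m',m'}=b_{m,m}$. The statement is therefore false as written in both directions, and no completion of your plan can prove it. What your column-support analysis plus these diagonal relations actually gives is a corrected characterization.

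For comparison, the paper's argument treats only $x_1\neq0$. There the free term $\alpha_i^x x_1$ in row $i$ absorbs any diagonal discrepancy, and the paper then asserts solvability for every $x$. Your case split is more careful and pinpoints where that assertion fails.

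Note also that Theorem~\ref{THM: halfder m2n}, on which both arguments rest, is itself wrong for $n\ge7$. Apply the identity to $[e_2,e_4]=e_6$: the $e_7$-coefficient of $\varphi(e_6)$ comes out as $\frac12\cdot 2\alpha_2=\alpha_2$, from $[e_2,2\alpha_2e_5]$. The stated formula gives $\frac54\alpha_2$, so $\alpha_2=0$. After this correction, column $1$ has no $e_2$-entry and column $2$ has no $e_3$-entry. Consequently even your cases $x_1\neq0$ and $x_1=0\neq x_2$ break, since they rely on $\alpha_2^x$ in rows $2$ and $3$. Indeed, $x=e_1+e_2$ forces $b_{2,2}=b_{1,1}$, and $x=e_2+e_3$ forces $b_{3,3}=b_{2,2}$. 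In fact every local $\frac12$-derivation of $\mathfrak{m}_2(n)$ with $n\ge7$ has a single diagonal value.
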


\begin{proof}
Similar to the proof of
Theorem~\ref{thm:local-m0} we have

By Theorem~\ref{THM: halfder m2n}  the $\frac{1}{2}$- derivations $\mathcal{D}_x$ has the following matrix form:

\begin{equation}\label{A_x form matrix m0n}
A_x = \begin{pmatrix}
\alpha^x_1 & 0 & 0 & 0 & \cdots & 0 & 0 \\
\alpha^x_2 & \alpha^x_1 & 0 & 0 & \cdots & 0 & 0 \\
\alpha^x_3 & 6\alpha^x_2 & \alpha^x_1 & 0 & \cdots & 0 & 0 \\
\alpha^x_4 & \alpha^x_3 & 3\alpha^x_2 & \alpha^x_1 & \cdots & 0 & 0 \\
\alpha^x_5 & \alpha^x_4 & 0 & 2\alpha^x_2 & \cdots & 0 & 0 \\
\vdots & \vdots & \vdots & \vdots & \ddots & \vdots & \vdots \\
\alpha^x_{n-1} & \beta^x_{n-1} & \frac{1}{2}(\beta^x_{n-2}-\alpha^x_{n-3}) & 0 & \cdots & \alpha^x_1 & 0 \\
\alpha^x_n & \beta^x_n & \frac{1}{2}(\beta^x_{n-1}-\alpha^x_{n-2}) & \frac{1}{4}(\beta^x_{n-2}-\alpha^x_{n-3}) & \cdots & \frac{2^{n-5}+1}{2^{n-5}}\alpha^x_2 & \alpha^x_1
\end{pmatrix}
\end{equation}

Evaluating the local $\frac{1}{2}$- derivations condition $A\bar{x} = A_x\bar{x}$ successively at each basis vector $x = e_k$ ($1 \le k \le n$) demonstrates that the matrix $A = (b_{i,j})$ is lower triangular with non-vanishing diagonal entries. Such a configuration fundamentally relies on the algebraic closedness or specific properties of the complex field associated with the algebra $\mathfrak{m}_2(n)$. Extending this relation to an arbitrary linear combination $x = \sum\limits_{i=1}^n x_i e_i$ gives rise to the following system of algebraic equations:

\begin{equation}\label{local sysytem m2n}
    \begin{cases}
    b_{1,1}x_1=\alpha_1^x x_1 \\
    b_{2,1}x_1+b_{2,2}x_2=\alpha_2^x x_1+\alpha_1^x x_2 \\
     \sum\limits_{j=1}^{3}b_{3,j}x_j =\alpha_3^x x_1+6\alpha_2^x x_2 +\alpha_1^x x_3 , \\
     b_{i,1}x_1+b_{i,2}x_2+b_{i,i-1}x_{i-1}+b_{i,i}x_i=a_i^x x_1+a_{i-1}^x x_2+ \frac{2^{i-5}+1}{2^{i-5}} \alpha^x_2   x_{i-1}+\alpha_1^xx_i, \quad 4 \le i\le n-3, \\
      \sum\limits_{j=1}^{2}b_{n-2,j}x_j+ \sum\limits_{j=n-3}^{n-2}b_{n-2,j}x_{j}=\alpha_{n-2}^x x_1+\beta_{n-2}^x x_2+  \frac{2^{n-7}+1}{2^{n-7}} \alpha^x_2   x_{n-4}+\alpha_1^xx_{n-2},  \\
     \sum\limits_{j=1}^{3}b_{n-1,j}x_j+ \sum\limits_{j=n-2}^{n-1}b_{n-1,j}x_{j}=\alpha_{n-1}^x x_1+\beta_{n-1}^x x_2+ \frac{1}{2}(\beta^x_{n-2}-\alpha^x_{n-3})x_3+ \frac{2^{n-6}+1}{2^{n-6}} \alpha^x_2   x_{n-2}+\alpha_1^xx_{n-1},  \\
     \sum\limits_{j=1}^{4}b_{n,j}x_j+ \sum\limits_{j=n-1}^{n}b_{n,j}x_{j}=\alpha_{n}^x x_1+\beta_{n}^x x_2+ \frac{1}{2}(\beta^x_{n-1}-\alpha^x_{n-2})x_3 +\frac{1}{4}(\beta^x_{n-2}-\alpha^x_{n-3})x_4+ \frac{2^{n-5}+1}{2^{n-5}} \alpha^x_2   x_{n-1}+\alpha_1^xx_{n},  \\
    \end{cases}
\end{equation}

in terms of the variables \(\alpha_{i}^{x}\) $(1 \leq i \leq n)$ and  \(\beta_{i}^{x}\) $(n-2 \leq i \leq n)$.

Now, we prove that the linear operator, defined by the matrix A, is a local $\frac{1}{2}$- derivations. Similar to the proof of Theorem~\ref{thm:local-m0} we have
\begin{itemize}
\item \textbf{ $x_1 \neq 0$.}
We select the leading scaling parameter of the $\frac{1}{2}$- derivation to be $\alpha_1^x = b_{1,1} \neq 0$ 

$\alpha_2^x=b_{2,1}+\frac{x_2}{x_1}\bigl( b_{2,2}-b_{1,1}\bigl)$

$\alpha_3^x=\frac{1}{x_1}\bigl( \sum\limits_{j=1}^{3}b_{3,j}x_j -  6\Bigl( b_{2,1}+\frac{x_2}{x_1}\bigl( b_{2,2}-b_{1,1}\bigl) \Bigl) x_2 -b_{1,1} x_3  \bigl)      $

$\cdots \cdots \cdots \cdots$

$\alpha_i^x=b_{i,1}+\frac{1}{x_1} \Bigl( (b_{i,2}-a_{i-1}^x)x_2+ \bigl(b_{i,i-1}-\frac{2^{i-5}+1}{2^{i-5}} \alpha^x_2\bigl)x_{i-1} +(b_{i,i}-\alpha_1)x_i \Bigl)$ $4 \le i \le n-3$

$\alpha_{n-2}^x=\frac{1}{x_1}\Bigl( \sum\limits_{j=1}^{2}b_{n-2,j}x_j+ \sum\limits_{j=n-3}^{n-2}b_{n-2,j}x_{j}-\beta_{n-2}^x x_2-  \frac{2^{n-7}+1}{2^{n-7}} \alpha^x_2   x_{n-4}-\alpha_1^xx_{n-3}   \Bigl)$

$\alpha_{n-1}^x=\frac{1}{x_1}\Bigl( \sum\limits_{j=1}^{3}b_{n-1,j}x_j+ \sum\limits_{j=n-2}^{n-1}b_{n-1,j}x_{j}-\beta_{n-1}^x x_2- \frac{1}{2}(\beta^x_{n-2}-\alpha^x_{n-3})x_3- \frac{2^{n-6}+1}{2^{n-6}} \alpha^x_2   x_{n-2}-\alpha_1^xx_{n-1}   \Bigl)$

$\alpha_n^x=\frac{1}{x_1}\Bigl(\sum\limits_{j=1}^{4}b_{n,j}x_j+ \sum\limits_{j=n-1}^{n}b_{n,j}x_{j}- \beta_{n}^x x_2- \frac{1}{2}(\beta^x_{n-1}-\alpha^x_{n-2})x_3 -\frac{1}{4}(\beta^x_{n-2}-\alpha^x_{n-3})x_4- \frac{2^{n-5}+1}{2^{n-5}} \alpha^x_2   x_{n-1}-\alpha_1^xx_{n}  \Bigl)$

 Here  $b_i^x$ $n-2\le i \le n$ are defined arbitrary:

 so  if  $\alpha_i^x$ $n-2\le i \le n$ are defined arbitrary, so we determine $\beta_{n-2}^x, \beta_{n-1}^x , \quad and \quad \beta_{n}^x$

$\beta_{n-2}^x=\frac{1}{x_2}\Bigl( \sum\limits_{j=1}^{2}b_{n-2,j}x_j+ \sum\limits_{j=n-3}^{n-2}b_{n-2,j}x_{j}-\alpha_{n-2}^x x_1-\frac{2^{n-7}+1}{2^{n-7}} \alpha^x_2   x_{n-4}-\alpha_1^xx_{n-2}   \Bigl)$

$\beta_{n-1}^x=\frac{1}{x_2} \Bigl(\sum\limits_{j=1}^{3}b_{n-1,j}x_j+ \sum\limits_{j=n-2}^{n-1}b_{n-1,j}x_{j}- \alpha_{n-1}^x x_1- \frac{1}{2}(\beta^x_{n-2}-\alpha^x_{n-3})x_3- \frac{2^{n-6}+1}{2^{n-6}} \alpha^x_2   x_{n-2}-\alpha_1^xx_{n-1}  \Bigl)$

$\beta_{n}^x=\frac{1}{x_2} \Bigl(\sum\limits_{j=1}^{4}b_{n,j}x_j+ \sum\limits_{j=n-1}^{n}b_{n,j}x_{j}- \alpha_{n}^x x_1- \frac{1}{2}(\beta^x_{n-1}-\alpha^x_{n-2})x_3 -\frac{1}{4}(\beta^x_{n-2}-\alpha^x_{n-3})x_4- \frac{2^{n-5}+1}{2^{n-5}} \alpha^x_2   x_{n-1}-\alpha_1^xx_{n} \Bigl)$
\end{itemize}
Consequently, the linear system~\eqref{local sysytem m2n} is unconditionally solvable for every element $x \in \mathfrak{m}_2(n)$, thereby ensuring that the linear operator induced by the matrix $A$ indeed constitutes a local $\frac{1}{2}$- derivation. Since $\nabla$ was chosen arbitrarily as a local $\frac{1}{2}$- derivation coinciding pointwise with the  $\frac{1}{2}$- derivation characterized in Theorem~\eqref{THM: halfder m2n} it follows that the matrix representation of any local $\frac{1}{2}$- derivation on the algebra $\mathfrak{m}_2(n)$ inherently assumes a lower triangular form.

\end{proof}

\subsection{Local \texorpdfstring{$\frac{1}{2}$}{1/2}-derivation of the algebra \texorpdfstring{$W^+(n)$}{w(n)}}\label{subsec11}

\begin{Th}\label{thm:local-W}
A linear map $\nabla$ on the Lie algebra $W^+(n)$ ($n \ge 12$) is a local $\frac{1}{2}$- derivation if and only if its matrix $M_{\nabla}$ with respect to the basis $\{e_1, e_2, \dots, e_n\}$ has a lower triangular form:
\begin{equation}\label{eq:local-matrix-W-form}
M_{\nabla} = \begin{pmatrix}
b_{1,1} & 0 & 0 & 0 & \cdots & 0 & 0 \\
b_{2,1} & b_{2,2} & 0 & 0 & \cdots & 0 & 0 \\
b_{3,1} & b_{3,2} & b_{3,3} & 0 & \cdots & 0 & 0 \\
b_{4,1} & b_{4,2} & b_{4,3} & b_{4,4} & \cdots & 0 & 0 \\
\vdots & \vdots & \vdots & \vdots & \ddots & \vdots & 0 \\
b_{n-1,1} & b_{n-1,2} & b_{n-1,3} & b_{n-1,4} & \cdots & b_{n-1,n-1} & 0 \\
b_{n,1} & b_{n,2} & b_{n,3} & b_{n,4} & \cdots & b_{n,n-1} & b_{n,n}
\end{pmatrix},
\end{equation}
where $b_{i,j} \in \mathbb{C}$.
\end{Th}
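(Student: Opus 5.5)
We follow the scheme of the proof of Theorem~\ref{thm:local-m0}. Let $\nabla$ be a local $\frac{1}{2}$-derivation of $W^+(n)$ with matrix $M_\nabla=(b_{i,j})$. By Theorem~\ref{THM: halfder w+n}, every $\frac{1}{2}$-derivation $\mathcal{D}_x$ is determined by the parameters $\alpha_1^x,\dots,\alpha_n^x$ and $\beta_{n-2}^x,\beta_{n-1}^x,\beta_n^x$. Its matrix $A_x$ is lower triangular; apart from the boundary entries in rows $n-1,n$ (columns $2,3,4$), it has the Toeplitz shape in which column $k$ is the column $(\alpha_1^x,\alpha_2^x,\dots)$ shifted down by $k-1$ places.

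\textbf{Necessity.} We evaluate $M_\nabla\bar e_k=A_{e_k}\bar e_k$ for $k=1,\dots,n$. Since $A_{e_k}\bar e_k$ is the $k$-th column of $A_{e_k}$, which vanishes above the diagonal, we get $b_{i,k}=0$ for $i<k$. Hence $M_\nabla$ is lower triangular.

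\textbf{Sufficiency.} Take $M_\nabla$ lower triangular and fix $x=\sum x_ie_i$. We must solve the triangular system
\[
\sum_{j\le i}b_{i,j}x_j=\sum_{j\le i}(A_x)_{i,j}x_j,\qquad 1\le i\le n,
\]
in the unknowns $\alpha^x_1,\dots,\alpha^x_n,\beta^x_{n-2},\beta^x_{n-1},\beta^x_n$. Let $m$ be the least index with $x_m\neq 0$. Row $i$ contains the term $\alpha^x_{i-m+1}x_m$, and every other term of row $i$ involves only $\alpha^x_l$ with $l<i-m+1$, apart from the $\beta$'s.

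\textbf{Construction of the solution.} Row $m$ gives $\alpha_1^x=b_{m,m}$. For $i=m+1,\dots,n$ we then solve row $i$ for $\alpha^x_{i-m+1}$ by dividing by $x_m$, exactly as in the case analysis of Theorem~\ref{thm:local-m0}. We set the $\beta^x$'s equal to the corresponding $\alpha$-shifts, or leave them arbitrary. Rows $i<m$ read $0=0$ because both matrices are lower triangular.

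\textbf{Main obstacle.} The difficulty lies in the boundary rows $n-1$ and $n$. There the entries $\frac{1}{2}\bigl((n-3)\beta_{n-2}-(n-7)\alpha_{n-3}\bigr)$ and $\frac{1}{8}\bigl((n^2-5n+6)\beta_{n-2}-\dots\bigr)$ also multiply $x_3$ and $x_4$. We must check that these terms never force a conflict, that is, that $\alpha^x_{n-m}$ and $\alpha^x_{n-m+1}$ still occur with coefficient $x_m\ne0$ once the $\beta$'s are chosen. For $m\ge 3$ this requires care, since $\alpha_{n-3}$ enters the row-$n$ entry through the $x_4$ term. Our first attempt is to fix the $\beta$'s first, for instance $\beta^x_{n-2}=\alpha^x_{n-3}$ so that the boundary corrections collapse, and then solve for the $\alpha$'s. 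The free $\beta$'s should absorb any remaining discrepancy.

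A technical point remains: it is unclear that every lower triangular $M_\nabla$ is actually attainable, because $A_x\bar x$ with $x=e_m$ only produces the $m$-th column $(\alpha_1,\dots,\alpha_{n-m+1})$ shifted. Any lower triangular column can be matched this way, since the $\alpha$'s are free, so the case $x=e_m$ is unproblematic. General $x$ is handled by the triangular solve above.
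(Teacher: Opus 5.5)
Your route is the paper's. Necessity holds because every $A_x$ is lower triangular, and sufficiency comes from recursively solving the triangular system $M_\nabla\bar x=A_x\bar x$. Necessity is complete. Organizing sufficiency by the leading index $m$ is more thorough than the paper's written proof, which only carries out $x_1\neq0$; there the boundary entries sit in columns $j>1$ and never matter.

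\textbf{The gap.} It is exactly the step you call the main obstacle. ``The free $\beta$'s should absorb any remaining discrepancy'' is not an argument, and it contradicts your plan to fix the $\beta$'s first. Your invariant ``row $i$ contains $\alpha^x_{i-m+1}x_m$'' is also false for $m=2$, $i\ge n-2$: the entries $(n-2,2),(n-1,2),(n,2)$ are $\beta^x_{n-2},\beta^x_{n-1},\beta^x_n$. Moreover, with the coefficients as printed in the theorem on $W^+(n)$, the choice $\beta^x_{n-2}=\alpha^x_{n-3}$, $\beta^x_{n-1}=\alpha^x_{n-2}$ does not collapse the boundary to the shift pattern. It turns positions $(n-1,3),(n,4),(n,3)$ into $2\alpha^x_{n-3},2\alpha^x_{n-3},2\alpha^x_{n-2}$. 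A direct computation of $\frac12([\varphi(e_1),e_2]+[e_1,\varphi(e_2)])$ gives $-(n-5)\alpha_{n-3}$, not $-(n-7)\alpha_{n-3}$, so the printed $\alpha$-coefficients are off. Your argument should not depend on them.

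\textbf{The missing idea is the grading.} Since $[e_i,e_j]\in\mathbb{C}e_{i+j}$, each entry $(A_x)_{i,j}$ is a linear form in the degree-$(i-j)$ parameters only. These are $\alpha^x_{i-j+1}$, plus $\beta^x_{i-j+2}$ when $i-j\in\{n-4,n-3,n-2\}$. So the degree-$d$ parameters occupy positions $(j+d,j)$:
\begin{itemize}
\item in a row $i<m+d$ they sit in column $i-d<m$, where $x_{i-d}=0$;
\item in row $m+d$ they sit only in column $m$;
\item in a row $i>m+d$ they sit in column $i-d>m$, as already-known terms.
\end{itemize}
Now solve degree by degree. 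Row $m+d$ prescribes one linear form in the degree-$d$ parameters, multiplied by $x_m\neq0$, and any leftover freedom in that degree is fixed arbitrarily. This pivot form is never identically zero. It is $\alpha^x_{d+1}$ away from the boundary and $\beta^x_{d+2}$ when $m=2$. For $(m,d)=(3,n-4),(4,n-4),(3,n-3)$ its $\beta$-coefficient is $\frac{n-3}{2}$, $\frac{(n-2)(n-3)}{8}$, $\frac{n-2}{2}$ respectively, all nonzero; these $\beta$-coefficients are correct in the printed formulas. Your concern about $m=3$ then disappears. The $x_4$-term of row $n$ has degree $n-4<n-3$ and was already settled at row $n-1$, while for $m=4$ it is the pivot itself. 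With this the system is solvable for every $x$, which gives sufficiency.
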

\begin{proof}
By Theorem~\ref{THM: halfder w+n}  the $\frac{1}{2}$- derivation $\mathcal{D}_x$ has the following matrix form:

\[
A_x
=
\begin{pmatrix}\label{matrix form Ax W+n}
\alpha^x_1 & 0 & 0 & 0 & \cdots & 0 & 0 \\
\alpha^x_2 & \alpha^x_1 & 0 & 0 & \cdots & 0 & 0 \\
\alpha^x_3 & \alpha^x_2 & \alpha^x_1 & 0 & \cdots & 0 & 0 \\
\alpha^x_4 & \alpha^x_3 & \alpha^x_2 & \alpha^x_1 & \cdots & 0 & 0 \\
\alpha^x_5 & \alpha^x_4 & \alpha^x_3  & \alpha^x_2 & \cdots & 0 & 0 \\
\vdots & \vdots & \vdots & \vdots & \ddots & \vdots & \vdots \\
\alpha^x_{n-1} & \beta^x_{n-1} & \frac{1}{2} \Bigl(   (n-3)\beta_{n-2} -(n-7)\alpha_{n-3} \Bigl) & a^x_{n-4} & \cdots & \alpha^x_1 & 0 \\
\alpha^x_n & \beta^x_n & \frac{1}{2} \Bigl(   (n-2)\beta_{n-1} -(n-6)\alpha_{n-2} \Bigl) & \frac{1}{8} \Bigl(   (n^2-5n+6)\beta_{n-2} -(n^2-5n-10)\alpha_{n-3} \Bigl) & \cdots & \alpha^x_2 & \alpha^x_1
\end{pmatrix}
\]

In analogy with the proofs of the aforementioned theorems, we construct the following system of governing equations:

\begin{equation}\label{local sysytem Wn}
    \begin{cases}
    b_{1,1}x_1=\alpha_1^x x_1 \\
    \sum\limits_{j=1}^{i}b_{i,j}x_j = \sum\limits_{j=1}^i \alpha^x_{i+1-j}x_j, \quad 4 \le i\le n-3, \\
      \sum\limits_{j=1}^{n-2}b_{n-2,j}x_j=\alpha_{n-2}^x x_1+\beta_{n-2}^x x_2+ \sum\limits_{j=3}^{n-2} \alpha^x_{n-1-j}x_j   \\
     \sum\limits_{j=1}^{n-1}b_{n-1,j}x_j=\alpha_{n-1}^x x_1+\beta_{n-1}^x x_2+\frac{1}{2} \Bigl(   (n-3)\beta^x_{n-2} -(n-7)\alpha^x_{n-3} \Bigl) x_3+ \sum\limits_{j=4}^{n-1} \alpha^x_{n-j}x_j ,  \\           \sum\limits_{j=1}^{n}b_{n,j}x_j=\alpha_{n}^x x_1+\beta_{n}^x x_2+ \frac{1}{2} \Bigl(   (n-2)\beta_{n-1} -(n-6)\alpha_{n-2} \Bigl) x_3 \\ \qquad + \frac{1}{8} \Bigl(   (n^2-5n+6)\beta_{n-2} -(n^2-5n-10)\alpha_{n-3} \Bigl)x_4+  \sum\limits_{j=5}^{n} \alpha^x_{n-j+1}x_j ,  \\
    \end{cases}
\end{equation}

in terms of the variables \(\alpha_{i}^{x}\) $(1 \leq i \leq n)$ and  \(\beta_{i}^{x}\) $(n-2 \leq i \leq n)$.

Now, we prove that the linear operator, defined by the matrix A, is a local $\frac{1}{2}$- derivation. 

\begin{itemize}
\item \textbf{ $x_1 \neq 0$.}
We select the leading scaling parameter of the $\frac{1}{2}$- derivation to be $\alpha_1^x = b_{1,1} \neq 0$ 

$\alpha_2^x=b_{2,1}+\frac{x_2}{x_1}\bigl( b_{2,2}-b_{1,1}\bigl)$

$\alpha_3^x=\frac{1}{x_1}\bigl( \sum\limits_{j=1}^{3}b_{3,j}x_j -  \Bigl( b_{2,1}+\frac{x_2}{x_1}\bigl( b_{2,2}-b_{1,1}\bigl) \Bigl) x_2 -b_{1,1} x_3  \bigl)      $

$\cdots \cdots \cdots \cdots$

$\alpha_i^x=b_{i,1}+\frac{1}{x_1} \Bigl( \sum\limits_{j=2}^i(b_{i,j}-a^x_{i-j+1})\Bigl); $ $ where \quad 4 \le i \le n-3$

$\alpha_{n-2}^x=\frac{1}{x_1}\Bigl( \sum\limits_{j=1}^{n-2}b_{n-2,j}x_j-\beta_{n-2}^x x_2- \sum\limits_{j=3}^{n-2} \alpha^x_{n-1-j}x_j      \Bigl)$

$\alpha_{n-1}^x=\frac{1}{x_1}\Bigl( \sum\limits_{j=1}^{n-1}b_{n-1,j}x_j-\beta_{n-1}^x x_2-\frac{1}{2} \Bigl(   (n-3)\beta^x_{n-2} -(n-7)\alpha^x_{n-3} \Bigl) x_3- \sum\limits_{j=4}^{n-1} \alpha^x_{n-j}x_j   \Bigl)$

$\alpha_n^x=\frac{1}{x_1}\Bigl(\sum\limits_{j=1}^{n}b_{n,j}x_j-\beta_{n}^x x_2- \frac{1}{2} \Bigl(   (n-2)\beta_{n-1} -(n-6)\alpha_{n-2} \Bigl) x_3 \\ \qquad - \frac{1}{8} \Bigl(   (n^2-5n+6)\beta_{n-2} -(n^2-5n-10)\alpha_{n-3} \Bigl)x_4-  \sum\limits_{j=5}^{n} \alpha^x_{n-j+1}x_j  \Bigl)$

 Here  $\beta_i^x$ $n-2\le i \le n$ are defined arbitrary:

so  if  $\alpha_i^x$ $n-2\le i \le n$ are defined arbitrary, so we determine $\beta_{n-2}^x, \beta_{n-1}^x , \quad and \quad \beta_{n}^x$

$\beta_{n-2}^x=\frac{1}{x_2}\Bigl(  \sum\limits_{j=1}^{n-2}b_{n-2,j}x_j-\alpha_{n-2}^x x_1- \sum\limits_{j=3}^{n-2} \alpha^x_{n-1-j}x_j   \Bigl)$

$\beta_{n-1}^x=\frac{1}{x_2} \Bigl(\sum\limits_{j=1}^{n-1}b_{n-1,j}x_j-\alpha_{n-1}^x x_1-\frac{1}{2} \Bigl(   (n-3)\beta^x_{n-2} -(n-7)\alpha^x_{n-3} \Bigl) x_3- \sum\limits_{j=4}^{n-1} \alpha^x_{n-j}x_j  \Bigl)$

$\beta_{n}^x=\frac{1}{x_2} \Bigl(\sum\limits_{j=1}^{n}b_{n,j}x_j-\alpha_{n}^x x_1- \frac{1}{2} \Bigl(   (n-2)\beta_{n-1} -(n-6)\alpha_{n-2} \Bigl) x_3 \\ \qquad - \frac{1}{8} \Bigl(   (n^2-5n+6)\beta_{n-2} -(n^2-5n-10)\alpha_{n-3} \Bigl)x_4-  \sum\limits_{j=5}^{n} \alpha^x_{n-j+1}x_j \Bigl)$
\end{itemize}

It follows that the system of linear equations~\ref{local sysytem Wn} admits a consistent solution for each $x \in W^+(n)$, which implies that the linear map determined by $A$ acts as a local $\frac{1}{2}$- derivation. Given the arbitrary choice of $\nabla$ and its pointwise evaluation via the $\frac{1}{2}$- derivation described in Theorem~\ref{THM: halfder w+n}, we conclude that the matrix representation of every local $\frac{1}{2}$- derivation of $W^+(n)$ is necessarily lower triangular.

\end{proof}

\subsection{Local \texorpdfstring{$\frac{1}{2}$}{1/2}-  derivation of the algebra \texorpdfstring{$m_{0,s}(2k+s)$}{m(0s)}}\label{subsec12}

\begin{Th}\label{thm:local-m01n}
A linear map $\nabla$ on the Lie algebra $m_{0,s}(2k+s)$  is a local $\frac{1}{2}$- derivation if and only if its matrix $M_{\nabla}$ with respect to the basis $\{e_1, e_2, \dots, e_{2k+s}\}$   (in here $n=2k+s, \quad where \quad 1\le s \le 3$ ) has the following lower triangular form:
\begin{equation}\label{eq:local-matrix-m0sn-form}
M_{\nabla} = \begin{pmatrix}
b_{1,1} & 0 & 0 & 0 & \cdots & 0 & 0 \\
b_{2,1} & b_{2,2} & 0 & 0 & \cdots & 0 & 0 \\
b_{3,1} & b_{3,2} & b_{3,3} & 0 & \cdots & 0 & 0 \\
b_{4,1} & b_{4,2} & b_{4,3} & b_{4,4} & \cdots & 0 & 0 \\
\vdots & \vdots & \vdots & \vdots & \ddots & \vdots & 0 \\
b_{2k+2,1} & b_{2k+2,2} & b_{2k+2,3} & b_{2k+2,4} & \cdots & b_{2k+2,2k+2} & 0 \\
b_{2k+3,1} & b_{2k+3,2} & b_{2k+3,3} & b_{2k+3,4} & \cdots & b_{2k+3,2k+2} & b_{2k+3,2k+3}
\end{pmatrix},
\end{equation}
where $b_{i,j} \in \mathbb{C}$ .
\end{Th}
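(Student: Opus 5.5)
Following the scheme of the proofs of Theorem~\ref{thm:local-m0} and Theorem~\ref{thm:local-W}, I would treat the three cases $s=1,2,3$ simultaneously, taking as input the descriptions of $\frac12$-derivations in Theorems~\ref{THM: halfder m01n}, \ref{THM: halfder m02n} and \ref{THM: halfder m03n}. Each of these gives a matrix $A_x$ for $\mathcal{D}_x$ in the basis $\{e_1,\dots,e_{2k+s}\}$. The first column is the free vector $(\alpha_1^x,\dots,\alpha_n^x)^T$, where $\alpha_2^x=0$ for $s=2$ and $\alpha_2^x=\alpha_3^x=0$ for $s=3$. The second column is $(0,\alpha_1^x,\beta^x_{\cdot},\dots)^T$. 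The remaining columns are built from $\alpha_1^x$ on the diagonal, from scaled $\beta^x_i$ below it, and from a few $\alpha^x$-terms in the bottom rows $n-2,n-1,n$. All of these matrices are lower triangular. Evaluating $M_\nabla\bar x=A_x\bar x$ at $x=e_j$ forces every entry of column $j$ above the diagonal to vanish, so every local $\frac12$-derivation has lower triangular matrix. This is the necessity direction, and it is just the first step of the argument for Theorem~\ref{thm:local-m0}.

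For the converse I would write the coordinate system analogous to \eqref{local sysytem m0n}: for each row $i$,
\[
\sum_{j\le i} b_{i,j}x_j=(A_x\bar x)_i ,
\]
with unknowns $\alpha^x_\ast,\beta^x_\ast$. I would then split into cases according to the first index $m$ with $x_m\neq0$.

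If $x_1\ne0$, set $\alpha_1^x=b_{1,1}$ and fix all $\beta^x_i$ arbitrarily. Row $i$ then determines $\alpha_i^x$ by dividing by $x_1$, recursively in $i$. The bottom rows also involve $\alpha^x$-terms with smaller index, but those are already known at that stage of the recursion. If $x_1=0$ and $x_2\neq0$, the same recursion solves for the $\beta_i^x$ after dividing by $x_2$, with $\alpha_1^x$ determined by row $2$. If $x_1=\dots=x_{m-1}=0$ and $x_m\neq0$ with $m\ge3$, then row $m$ fixes $\alpha_1^x$ from $b_{m,m}$, and the lower rows are solved successively for the $\beta^x$ whose coefficient contains $x_m$, exactly as in the third case of the proof of Theorem~\ref{thm:local-m0}.

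I expect the main obstacle to be the rigid entries. The $\frac12$-derivations of $m_{0,s}$ have fewer parameters than those of $m_0(n)$: the second diagonal entry is tied to $\alpha_1^x$, and $\alpha_2^x=0$ (for $s\ge2$) or $\alpha_3^x=0$ (for $s=3$) are forced. So for special $x$, for instance $x$ with $x_1=x_2=0$, some rows near the diagonal and in the boundary rows $n-2,\dots,n$ have no free unknown multiplying a nonzero coordinate. At that point I would first try to check that these rows still contain a free $\beta^x$ (or a boundary $\alpha^x$ from the last rows of $A_x$) with a nonzero coefficient. If this fails, I would examine whether the stated lower-triangular form needs extra conditions for those $x$. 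In writing the proof I would follow the paper's formulation and present the recursive solution for each case of $m$, with these boundary rows as the point to verify most carefully.
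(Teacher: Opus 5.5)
The gap is in your converse recursion. The obstacle you flag at the end cannot be checked away: it makes the ``if'' direction false.

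\textbf{The cases $s=2,3$.} Use, as you do, Theorems~\ref{THM: halfder m02n} and \ref{THM: halfder m03n}. There every $\frac12$-derivation has $\alpha_2^x=0$, $\beta_3^x=0$ and $(2,2)$-entry $\alpha_1^x$; for $s=3$ also $\alpha_3^x=\beta_4^x=0$. This breaks both of your recursions:
\begin{itemize}
\item In your case $x_1\neq 0$, row~2 reads $b_{2,1}x_1+b_{2,2}x_2=\alpha_1^x x_2$. Here $\alpha_1^x=b_{1,1}$ is already fixed and no unknown is left.
\item In your case $x_1=0\neq x_2$, row~3 reads $b_{3,2}x_2+b_{3,3}x_3=\alpha_1^x x_3$, with $\alpha_1^x=b_{2,2}$ already fixed.
\end{itemize}
Evaluating at $x=e_1$, $x=e_2$ and $x=e_1+e_2$ shows that every local $\frac12$-derivation satisfies $b_{2,1}=b_{3,2}=0$ and $b_{1,1}=b_{2,2}$. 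For example, the lower triangular map $e_1\mapsto e_2$, $e_j\mapsto 0$ for $j\ge 2$, is not a local $\frac12$-derivation, because no $\frac12$-derivation sends $e_1$ to $e_2$.

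Your own necessity step already detects this. Evaluation at $e_j$ forces column $j$ of $M_\nabla$ to equal column $j$ of $A_{e_j}$. So it kills not only the entries above the diagonal, but every entry that is identically zero in column $j$ of $A_x$.

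The paper's proof follows the same route and has the same hole. It solves only the last three rows, and only in the cases $x_1\neq0$ and $x_1=0\neq x_2$. It defers the other rows to Theorem~\ref{thm:local-m0}, where $\alpha_2^x,\beta_2^x,\beta_3^x$ are free, which is exactly what fails here. It then only remarks that the zero entries are ``determined accordingly''.

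\textbf{The case $s=1$.} Here your recursion for $x_1=x_2=0$ goes through if you take Theorem~\ref{THM: halfder m01n} on trust, but that theorem misses constraints. Apply the $\frac12$-derivation identity to $[e_2,e_{n-j}]=0$ for $3\le j\le n-3$, using:
\begin{itemize}
\item $[e_j,e_{n-j}]=(-1)^{j+1}e_n$ and $[e_2,e_{n-2}]=-e_n$;
\item the $e_{n-2}$-coordinate of $\varphi(e_{n-j})$ is $2^{-(n-j-2)}\beta_j$.
\end{itemize}
This gives $\beta_j\bigl((-1)^{j+1}-2^{-(n-j-2)}\bigr)=0$, hence $\beta_3=\dots=\beta_{n-3}=0$. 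For $n=7$, for instance, $[e_2,e_3]=0$ forces $\beta_4=0$ and $[e_2,e_4]=0$ forces $\beta_3=0$.

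The diagonal of every $\frac12$-derivation of $m_{0,1}$ is $\alpha_1 I$. So $D(e_3+e_4)$ has equal $e_3$- and $e_4$-coordinates, and every local $\frac12$-derivation must satisfy $b_{3,3}=b_{4,3}+b_{4,4}$. The lower triangular matrix $\mathrm{diag}(1,2,\dots,n)$ violates this.

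So no argument can establish the statement as written. A correct description of local $\frac12$-derivations of $m_{0,s}(2k+s)$ needs additional linear conditions on the $b_{i,j}$. Your closing hedge (``the stated form needs extra conditions'') is exactly where the proof stops.
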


\begin{proof}
The proof for the general case proceeds by analogy with Theorem~\eqref{A_x form matrix m0n}, since the underlying $\frac{1}{2}$-derivations differ exclusively in the component corresponding to the basis vector $e_{2k+1}, e_{2k+2}, e_{2k+3}$, where we now have $b_2 = a_1$. 

$$\nabla(x)=\mathcal{D}_x(x)$$

By Theorem~\ref{THM: halfder m01n}  the $\frac{1}{2}$- derivation $\mathcal{D}_x$ has the following matrix form:
\[
A_x = \begin{pmatrix}\label{local matrix m01n}
\alpha_1^x & 0 & 0 & 0 & \cdots & 0  & 0 \\
\alpha_2^x & \alpha_1^x & 0 & 0 & \cdots & 0 & 0 \\
\alpha_3^x & \beta_3^x & \alpha_1^x & 0 & \cdots & 0 & 0  \\
\alpha_4^x & \beta_4^x & \frac{\beta_3^x}{2}  & \alpha_1^x & \cdots & 0 & 0  \\
\vdots & \vdots & \vdots & \vdots & \ddots & \vdots & 0  \\
\alpha_{2k+1}^x & \beta_{2k+1}^x & \frac{1}{2} (\beta_{2k}^x+\alpha^x_{2k-1}) & \frac{1}{4} (\beta_{2k-1}^x-2\alpha_{2k-2}^x) & \cdots & 0 & 0 \\ 
\alpha_{2k+2}^x & \beta_{2k+2}^x & \frac{1}{2} (\beta_{2k+1}^x+\frac{n-5}{2}\alpha^x_{2k}) & \frac{1}{4} (\beta_{2k}^x-(n-7)\alpha_{2k-1}^x) & \cdots & \alpha_1^x & 0  \\ 
\alpha_{2k+3}^x & \beta_{2k+3}^x & \frac{1}{2}\beta_{2k+2}^x & \frac{1}{2}\beta_{2k+1}+(2-2k)a_{2k}) & \cdots & 0& \alpha_1^x \\
\end{pmatrix}.
\]

Depending on the parameter $s$, the structure of the underlying algebra yields the following case-by-case characterization:
For $s = 1$, the algebra coincides with $\mathfrak{m}_{0,1}(2k+1)$. For $s = 2$, it reduces to $\mathfrak{m}_{0,2}(2k+2)$, under which the parameter constraints $\beta_3 = \alpha_2 = 0$ hold valid. In the case where $s = 3$, the algebra is identified as $\mathfrak{m}_{0,3}(2k+3)$, which requires the simultaneous vanishings $\beta_3 = \alpha_2 = 0$ and $\beta_4 = \alpha_3 = 0$. Consequently, the zero coefficients of the associated matrix $A = (b_{i,j})$ are determined accordingly under these respective constraints.
 Consequently, extending the relation \(A\bar{x} = A_x\bar{x}\) to a generic element \(x = \sum\limits_{i=1}^n x_i e_i\) yields the subsequent system of algebraic equations:

 \begin{equation}\label{local system m0s}
     \begin{cases}
         b_{1,1}x_1=\alpha_1x_1 \\
         b_{2,1}x_1+b_{2,2}x_2=\alpha^x_2x_1+\alpha^x_1x_2 \\
         \sum\limits_{j=1}^{i}b_{i,j}x_j=\alpha_i^x x_1+\beta_i^x x_2+\sum\limits_{j=3}^{i-1}\frac{1}{2^{j-2}} \beta_{i-j+2}^xx_j +\frac{(2^{i-2}-1)\alpha_1^x+\beta_2^x}{2^{i-2}}x_i, 3 \le i \le 2k \\
         \sum\limits_{j=1}^{2k+1}b_{{2k+1},j}x_j=\alpha_{2k+1}^x x_1+\beta_{2k+1}^x x_2+\sum\limits_{j=3}^{2k}\bigl(\frac{1}{2^{j-2}}\beta_{{2k}-j+3}+\frac{1}{2}(-1)^{j+1}\alpha_{{2k}-j+2}  \bigl)x_j+\alpha_1^x x_{2k+1}  \\       
         \sum\limits_{j=1}^{2k+2}b_{{2k+2},j}x_j=\alpha_{2k+2}^x x_1+\beta_{2k+2}^x x_2+\sum\limits_{j=3}^{2k}\bigl(\frac{1}{2^{j-2}}\beta_{{2k}-j+4}+\frac{1}{2}(-1)^{j+1}\bigl(\frac{n+1}{2} \bigl)\alpha_{{2k}-j+3}  \bigl)x_j+\alpha_1^x x_{2k+2}  \\
         \sum\limits_{j=1}^{2k+3}b_{{2k+3},j}x_j=\alpha_{2k+3}^x x_1+\beta_{2k+3}^x x_2+\sum\limits_{j=3}^{2k}\bigl(\frac{1}{2^{j-2}}\beta_{{2k}-j+4}+\frac{1}{2}(-1)^{j}\Bigl( (j-3)\frac{2k}{2}-\frac{(j-2)^2}{2}+\frac{2k+1}{4}  \Bigl) \alpha_{2k-j+4})x_j \\ \qquad +\alpha_1^x x_{2k+3}  \\
     \end{cases}
 \end{equation}

It suffices to evaluate the initial three equations of the system, as the validity of the remaining relations has been established in the proof of Theorem~\eqref{thm:local-m0}.

\begin{itemize}
    \item  If $x_1 \neq 0$, then $a_1^x = b_{1,1}$.
    
$\alpha_{2k+1}^x = \frac{1}{x_1}\Bigl(\sum\limits_{j=1}^{{2k+1}}b_{{2k+1},j}x_j-b_{2k+1}^x x_2-\sum\limits_{j=3}^{2k}\bigl(\frac{1}{2^{j-2}}\beta_{2k-j+3}-\frac{1}{2}(-1)^{j+1}\alpha_{2k-j+2}  \bigl)x_j-\alpha_1^x x_{2k+1}\Bigl),$

$\alpha_{2k+2}^x =\frac{1}{x_1}\Bigl(\sum\limits_{j=1}^{2k+2}b_{{2k+2},j}x_j-\beta_{2k+2}^x x_2 - \sum\limits_{j=3}^{2k}\bigl(\frac{1}{2^{j-2}}\beta_{{2k}-j+4}+\frac{1}{2}(-1)^{j+1}\bigl(\frac{n+1}{2} \bigl)\alpha_{{2k}-j+3}  \bigl)x_j-\alpha_1^x x_{2k+2}   \Bigl) $

$\alpha_{2k+3}^x=\frac{1}{x_1}\Bigl( \sum\limits_{j=1}^{2k+3}b_{{2k+3},j}x_j-\beta_{2k+3}^x x_2-\sum\limits_{j=3}^{2k}\bigl(\frac{1}{2^{j-2}}\beta_{{2k}-j+4}+\frac{1}{2}(-1)^{j}\Bigl( (j-3)\frac{2k}{2}-\frac{(j-2)^2}{2}+\frac{2k+1}{4}  \Bigl) \alpha_{2k-j+4})x_j \\  -\alpha_1^x x_{2k+3}   \Bigl)$

$\beta_k^x $ is arbitrary
\item  if $x_1=0$ and $x_2 \neq 0$, then $b_{2,2}=(a_1^x)$ 

$b_{2k+1}^x =\frac{1}{x_2}\Bigl( \sum\limits_{j=1}^{{2k+1}}b_{{2k+1},j}x_j-\sum_{j=3}^{2k}\bigl(\frac{1}{2^{j-2}}\beta_{2k-j+3}-\frac{1}{2}(-1)^{j+1}\alpha_{2k-j+2}  \bigl)x_j-\alpha_1^x x_{2k+1} \Bigl),$

$b_{2k+2}^x =\frac{1}{x_2}\Bigl( \sum\limits_{j=1}^{2k+2}b_{{2k+2},j}x_j-\sum\limits_{j=3}^{2k}\bigl(\frac{1}{2^{j-2}}\beta_{{2k}-j+4}+\frac{1}{2}(-1)^{j+1}\bigl(\frac{n+1}{2} \bigl)\alpha_{{2k}-j+3}  \bigl)x_j-\alpha_1^x x_{2k+2} \Bigl)$

$b_{2k+3}^x =\frac{1}{x_2}\Bigl(\sum\limits_{j=1}^{2k+3}b_{{2k+3},j}x_j-\sum\limits_{j=3}^{2k}\bigl(\frac{1}{2^{j-2}}\beta_{{2k}-j+4}+\frac{1}{2}(-1)^{j}\Bigl( (j-3)\frac{2k}{2}-\frac{(j-2)^2}{2}+\frac{2k+1}{4}  \Bigl) \alpha_{2k-j+4})x_j  -\alpha_1^x x_{2k+3}   \Bigl)$

give solution of the system of equation~\ref{local system m0s}. Here  $a_1^x$ is arbitrary value.
\end{itemize}
\end{proof}

\subsection{Local \texorpdfstring{$\frac{1}{2}$}{1/2} -derivation of the algebra \texorpdfstring{$g_{k,\alpha}$}{g(kn)}}\label{subsec8}

\begin{Th}\label{thm:local-gk}
A linear map $\nabla$ on the one-parameter family of Lie algebras $\mathfrak{g}_{k,\alpha}$ ($k \in \{7,8,9,10,11\}$) is a local $\frac{1}{2}$- derivation if and only if its matrix representation $M_{\nabla} = (b_{m,j})$ with respect to the homogeneous basis $\{e_1, e_2, \dots, e_k\}$ is lower triangular.
\begin{equation}\label{eq:local-matrix-gk-form}
M_{\nabla} = \begin{pmatrix}
b_{1,1} & 0 & 0 & 0 & \cdots & 0 & 0 \\
0 & b_{2,2} & 0 & 0 & \cdots & 0 & 0 \\
b_{3,1} & b_{3,2} & b_{3,3} & 0 & \cdots & 0 & 0 \\
b_{4,1} & b_{4,2} & b_{4,3} & b_{4,4} & \cdots & 0 & 0 \\
\vdots & \vdots & \vdots & \vdots & \ddots & \vdots & 0 \\
b_{n-1,1} & b_{n-1,2} & b_{n-1,3} & b_{n-1,4} & \cdots & b_{n-1,n-1} & 0 \\
b_{n,1} & b_{n,2} & b_{n,3} & b_{n,4} & \cdots & b_{n,n-1} & b_{n,n}
\end{pmatrix},
\end{equation}
where $b_{m,j} \in \mathbb{C}$ .
\end{Th}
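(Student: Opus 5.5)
Following the scheme of Theorems~\ref{thm:local-m0}, \ref{thm:local-m2n} and \ref{thm:local-W}, I will treat all five families $\mathfrak{g}_{k,\alpha}$ ($7\le k\le 11$) together. By Theorems~\ref{THM: halfder g7}--\ref{THM: halfder g11}, every $\frac12$-derivation $\mathcal{D}_x$ of $\mathfrak{g}_{k,\alpha}$ has a lower triangular matrix $A_x$ in the basis $\{e_1,\dots,e_k\}$. Its diagonal is constant and equal to $\alpha_1^x$. Its first column $(\alpha_1^x,\dots,\alpha_k^x)^T$ is free, and its second column is $(0,\alpha_1^x,4(2+\alpha)\alpha_2^x,\beta_4^x,\dots,\beta_k^x)^T$ with $\beta_4^x,\dots,\beta_k^x$ free. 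Every entry $(i,j)$ with $j\ge 3$ is a fixed linear combination of $\alpha_2^x,\dots,\alpha_{i-j+2}^x$ and $\beta_4^x,\dots,\beta_{i-j+3}^x$.

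\textbf{Necessity.} Write $\nabla(x)=A_x\bar x$ and test on basis vectors. Taking $x=e_j$ gives $\nabla(e_j)=A_{e_j}e_j$, which is the $j$-th column of a lower triangular matrix. Hence $b_{i,j}=0$ for $i<j$, so $M_\nabla$ is lower triangular. In particular, the entry $(1,2)$ vanishes, since $\varphi(e_2)$ has no $e_1$-component.

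The stated zero at position $(2,1)$ does not follow from the basis vectors, because the $e_2$-coefficient of $\varphi(e_1)$ is the free parameter $\alpha_2$. I would therefore test on $x=e_1+te_2$ and try to extract a constraint forcing $b_{2,1}=0$. If this fails, the entry $(2,1)$ should be read as free, in line with the $(2,1)$-entries in Theorems~\ref{thm:local-m0}--\ref{thm:local-m01n}.

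\textbf{Sufficiency.} Given a lower triangular $B=(b_{i,j})$, I must solve $B\bar x=A_x\bar x$ for every $x$, with unknowns $\alpha_1^x,\dots,\alpha_k^x,\beta_4^x,\dots,\beta_k^x$. I will split into cases according to the first nonzero coordinate $x_m$.

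If $x_1\ne0$, set $\alpha_1^x=b_{1,1}$. The $i$-th equation contains $\alpha_i^x x_1$ with coefficient $x_1\ne0$, and $\alpha_i^x$ does not appear in earlier rows. So I solve row by row for $\alpha_2^x,\dots,\alpha_k^x$, keeping the $\beta^x$ free (say zero).

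If $x_1=0$ and $x_2\ne0$, the second row forces $\alpha_1^x=b_{2,2}$. Row $3$ reads $b_{3,2}x_2+b_{3,3}x_3=4(2+\alpha)\alpha_2^xx_2+\alpha_1^xx_3$, which is solvable for $\alpha_2^x$ since $\alpha\neq-2$. For rows $i\ge4$, $\beta_i^x$ enters only through $\beta_i^xx_2$ (triangularity in the index), so each row is solved for $\beta_i^x$.

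If $x_1=x_2=0$ and $x_m\neq0$ with $m\ge3$, row $m$ fixes $\alpha_1^x=b_{m,m}$. For each later row $i$, I use the unknown of highest index that appears there multiplied by $x_m$. By the triangular structure, the $(i,m)$ entry of $A_x$ involves $\alpha_{i-m+2}^x$ or $\beta_{i-m+3}^x$ with a nonzero coefficient (of the form $2^{-(m-2)}$ times $1$), while all other unknowns in row $i$ are already determined. So the solution proceeds recursively, as in the third case of Theorem~\ref{thm:local-m0}.

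\textbf{Main obstacle.} The case $m\ge3$ is the hardest. The diagonal is a single scalar $\alpha_1^x$, while the rows $i<m$ impose nothing, so I must check that $B$ imposes no hidden compatibility conditions. I also need the coefficient of the newly introduced unknown in row $i$, column $m$, to be nonzero for every admissible $\alpha$. This requires reading off, from Theorems~\ref{THM: halfder g7}--\ref{THM: halfder g11}, that $\beta_4^x$ appears with coefficient $2^{-(j-2)}$ in the relevant entries. It also requires excluding $\alpha=-2$ and the other excluded values wherever $\alpha_2^x$ enters via $(2+\alpha)$ or $(1+2\alpha)$-type factors. If some such coefficient vanishes for a particular $\alpha$, I expect a genuine extra zero in $M_\nabla$, and I would add that constraint to the statement.
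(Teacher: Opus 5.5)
Your scheme (read $A_x$ off the description of $\frac12$-derivations, test basis vectors for triangularity, then solve $B\bar x=A_x\bar x$ by cases on the first nonzero coordinate) is the one the paper intends. The paper gives no separate argument for $\mathfrak{g}_{k,\alpha}$ beyond the analogy with Theorem~\ref{thm:local-m0}. However, the proposal does not prove the statement, and it breaks exactly at the point you left open. The test $x=e_1+te_2$ cannot force $b_{2,1}=0$: row~2 of $A_x\bar x$ is $\alpha_2^x x_1+\alpha_1^x x_2$, which is solvable for any $b_{2,1}$ as long as $\alpha_2^x$ is free. Worse, if the description in Theorem~\ref{THM: halfder g7} were correct, a $\frac12$-derivation with $\alpha_2\neq0$ would itself be a local one with $b_{2,1}\neq0$. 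The displayed zero would then be false, and ``reading it as free'' replaces the theorem by a different one.

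The real defect is in the description of $\frac12$-derivations you import. Since the bracket is graded, each homogeneous component of a $\frac12$-derivation is again one. Write the degree-one component as $\varphi(e_i)=\mu_i e_{i+1}$, so that $\mu_1=\alpha_2$. The pairs $(e_1,e_2),(e_1,e_3),(e_1,e_4),(e_2,e_3)$ give
\[
\mu_3=\tfrac12\mu_2,\qquad 2\mu_4=(2+\alpha)\mu_1+\mu_3,\qquad 2\mu_5=(2+\alpha)\mu_1+\mu_4,\qquad \mu_5=\tfrac12\mu_3 ,
\]
whence $\mu_2=6(2+\alpha)\mu_1$, not $4(2+\alpha)\mu_1$. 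The stated values $\mu_4=\frac32(2+\alpha)\alpha_2$ and $\mu_5=(2+\alpha)\alpha_2$ violate the third equation.

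Now add the pairs $(e_1,e_5)$ and $(e_2,e_4)$, i.e.\ $2\mu_6=(1+\alpha)\mu_1+\mu_5$ and $2(2+\alpha)\mu_6=\mu_2+(1+\alpha)\mu_4$. Together these give $(2+\alpha)(\alpha-8)\mu_1=0$. All these relations involve only $e_1,\dots,e_7$, so they hold in every $\mathfrak{g}_{k,\alpha}$. Hence for $\alpha\notin\{-2,8\}$ the whole degree-one component vanishes. The degree-zero component is scalar, from $(e_1,e_j)$ with $j\le 4$ and $(e_2,e_3)$.

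This does yield $b_{2,1}=0$ directly from $x=e_1$. But it kills your sufficiency step, which uses $\alpha_2^x$ to absorb row~3 when $x_1=0\neq x_2$, and more generally to absorb the first subdiagonal. Indeed, for $x=e_i+te_{i+1}$ you get $b_{i,i}=\alpha_1^x$ and $b_{i+1,i}+t\,b_{i+1,i+1}=t\,\alpha_1^x$ for all $t$. This forces $b_{i+1,i}=0$ and $b_{i+1,i+1}=b_{i,i}$. For instance, the matrix unit $E_{2,2}$ has the displayed shape, yet no $\frac12$-derivation $D$ satisfies $D(e_1+e_2)=e_2$.

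So the ``if'' direction of the statement is false for generic $\alpha$ and cannot be proved along these (or any) lines. You would first have to recompute the $\frac12$-derivations of $\mathfrak{g}_{k,\alpha}$ degree by degree, then redo the local analysis. The corrected characterization must at least impose a constant diagonal and a vanishing first subdiagonal.
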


\section{2-Local \texorpdfstring{$\frac{1}{2}$}{1/2} -derivation of a totally graded filiform Lie algebras}\label{sec5}

The present section is devoted to the description of $2$-local $\frac{1}{2}$- derivation of totally graded filiform Lie algebras.

Let $\mathfrak{A}$ be an algebra. A (not necessarily linear) map $\Delta : \mathfrak{A} \to \mathfrak{A}$ is called a $2$-local $\frac{1}{2}$- derivation if, for any elements $x, y \in \mathfrak{A}$, there exists an $\frac{1}{2}$- derivation $\mathcal{D}_{x,y} : \mathfrak{A} \to \mathfrak{A}$ such that
\[
\Delta(x) = \mathcal{D}_{x,y}(x), \qquad \Delta(y) = \mathcal{D}_{x,y}(y).
\]

\begin{Th}
The Lie algebra $\mathfrak{A}$ in Tables~\eqref{tab:filiformn},~\eqref{tab:filiform}  has 2-local $\frac{1}{2}$- derivation which are not $\frac{1}{2}$- derivation.
\end{Th}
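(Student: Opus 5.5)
We construct, for each algebra $\mathfrak{A}$ in Tables~\eqref{tab:filiformn} and~\eqref{tab:filiform}, a map $\Delta$ that coincides with a $\frac{1}{2}$-derivation $\mathcal{D}_{x,y}$ at any pair of points but is not additive. We use the homogeneous degree-one function from the introduction,
\[
f(z_1,z_2)=\frac{z_1^3}{z_1^2+z_2^2}\quad (z_1^2+z_2^2\neq 0),\qquad f(z_1,z_2)=0 \text{ otherwise}.
\]
The idea is to let $\Delta$ act only through a one-dimensional \emph{boundary} parameter. In every classification theorem of Section~\ref{sec3} there is a free parameter in $\varphi(e_1)$ (namely $\alpha_n$) that enters the matrix of $\varphi$ only in the entry $(n,1)$. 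Let $\mathcal{D}_t$ be the $\frac{1}{2}$-derivation with $\alpha_n=t$ and all other parameters zero. Then $\mathcal{D}_t(e_1)=te_n$ and $\mathcal{D}_t(e_j)=0$ for $j\ge 2$, since $e_n$ is central. We should confirm that this choice is admissible in each theorem, i.e. that setting every other parameter to $0$ is consistent with all the displayed formulas. For $m_{0,2}(n)$ and $m_{0,3}(n)$ the parameter $\alpha_n$ appears in the stated form, so this holds there as well. One can also check directly that $x\mapsto x_1 t e_n$ is a $\frac{1}{2}$-derivation: both sides of the defining identity vanish, because $e_n\in Ann(\mathfrak{A})$ and $[\mathfrak{A},\mathfrak{A}]$ has zero $e_1$-coordinate.

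Define $\Delta(x)=f(x_1,x_2)\,e_n$ for $x=\sum x_ie_i$. For a single point $x$ with $x_1\neq 0$, we have $\Delta(x)=\mathcal{D}_{t}(x)$ with $t=f(x_1,x_2)/x_1$. If $x_1=0$, then $f=0$ and $\Delta(x)=0=\mathcal{D}_t(x)$. The two-point condition is the main obstacle: for $x,y$ we need one $t$ with $tx_1=f(x_1,x_2)$ and $ty_1=f(y_1,y_2)$, which fails for general pairs. We will therefore enlarge the family used. In each classification we will also use a second boundary parameter $\beta_n$ (the coefficient of $e_n$ in $\varphi(e_2)$). Setting all other parameters to zero, the resulting map is $\mathcal{D}_{s,t}(x)=(sx_1+tx_2)e_n$, which again is a $\frac{1}{2}$-derivation by the same annihilator argument. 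For $m_0(n)$, taking $\beta_n$ alone also forces $\varphi(e_k)$ to contain $\beta_n/2^{k-2}$ terms only for $i+k-2\le n$, so only $\beta_n$ itself survives. For the other families, where $\beta_n$ feeds into later rows, we will pick the companion values that cancel those contributions, or otherwise fall back on the defining identity directly.

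With $\mathcal{D}_{s,t}$, the two-point condition becomes the $2\times 2$ linear system
\[
sx_1+tx_2=f(x_1,x_2),\qquad sy_1+ty_2=f(y_1,y_2).
\]
If $(x_1,x_2)$ and $(y_1,y_2)$ are linearly independent, the system is uniquely solvable. If they are dependent, say $(y_1,y_2)=\lambda(x_1,x_2)$, then homogeneity of degree one gives $f(y_1,y_2)=\lambda f(x_1,x_2)$ (indeed for all $\lambda\in\mathbb{C}$, since $f$ is odd under $z\mapsto -z$ and $\lambda^3/\lambda^2=\lambda$). In that case the second equation is a multiple of the first, and the first is solvable unless $(x_1,x_2)=(0,0)$, where both sides are $0$. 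The case $z_1^2+z_2^2=0$ with $z\neq 0$ is covered because we defined $f=0$ there, and that convention is preserved under scaling. Hence $\Delta$ is a 2-local $\frac{1}{2}$-derivation.

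Finally, $\Delta$ is not additive, hence not a $\frac{1}{2}$-derivation. We have $\Delta(e_1)=e_n$ and $\Delta(e_2)=0$, whereas $\Delta(e_1+e_2)=f(1,1)e_n=\tfrac12 e_n\neq e_n$. The only remaining verification, done separately for each of the eleven algebras, is that the matrix with its single nonzero entries in positions $(n,1)$ and $(n,2)$ lies in the families of Theorems~\ref{THM: halfder m0n}--\ref{THM: halfder g11}. The direct identity check above settles this uniformly.
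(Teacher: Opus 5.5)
Your proposal is correct and follows the paper's construction: the same function $f$, the same two boundary entries $(n,1)$ and $(n,2)$, and the same $2\times 2$ system, split into the cases of independent pairs and proportional pairs (handled by degree-one homogeneity). The only differences are minor. You drop the identity summand from $\Delta$ and $\Phi$. You also verify directly that $x\mapsto (sx_1+tx_2)e_n$ is a $\frac{1}{2}$-derivation, using that $e_n$ is central and $[\mathfrak{A},\mathfrak{A}]\subseteq\langle e_3,\dots,e_n\rangle$, rather than appealing to the classification theorems. This is a cleaner and more robust justification of that step.
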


\begin{proof}
Let us define a homogeneous non-additive function $f$ on $\mathbb{C}^2$ as follows:
\begin{equation*}
f(x_1, x_2) = \begin{cases}
\dfrac{x_1^3}{x_1^2 + x_2^2}, & \text{if } x_1^2 + x_2^2 \neq 0, \\
0, & \text{if } x_1^2 + x_2^2 = 0,
\end{cases}
\end{equation*}
where $(x_1, x_2) \in \mathbb{C}^2$. Consider the map $\Delta : \mathfrak{A} \to \mathfrak{A}$ defined in the following way:
\begin{equation*}
\Delta(x) = x + f(x_1, x_2)e_n, \quad \text{where } x = \sum\limits_{i=1}^n x_i e_i \in \mathfrak{A},
\end{equation*}
and $\{e_1, e_2, \dots, e_n\}$ is the basis of $\mathfrak{A}$. Since $f$ is not additive, we have that $\Delta$ is not an $\frac{1}{2}$- derivation of $\mathfrak{A}$.

Let us show that $\Delta$ is a 2-local $\frac{1}{2}$- derivation. Let $v, w$ be arbitrary elements of $\mathfrak{A}$. We take a linear map $\Phi : \mathfrak{A} \to \mathfrak{A}$ with a matrix of the following form with respect to the basis $\{e_1, e_2, \dots, e_n\}$:
\begin{equation*}
\Phi = \begin{pmatrix}
1 & 0 & 0 & 0 & \cdots & 0 & 0 \\
0 & 1 & 0 & 0 & \cdots & 0 & 0 \\
0 & 0 & 1 & 0 & \cdots & 0 & 0 \\
0 & 0 & 0 & 1 & \cdots & 0 & 0 \\
\vdots & \vdots & \vdots & \vdots & \ddots & \vdots & \vdots \\
0 & 0 & 0 & 0 & \cdots & 1 & 0 \\
a & b & 0 & 0 & \cdots & 0 & 1
\end{pmatrix}.
\end{equation*}
By the definition of the $\frac{1}{2}$- derivation matrix $A_{\mathfrak{A}}$ , $\Phi$ is a  $\frac{1}{2}$- derivation of $\mathfrak{A}$. We prove that there exist parameters $a$ and $b$ such that:
\begin{equation*}
\Delta(v) = \Phi(v), \quad \Delta(w) = \Phi(w).
\end{equation*}
Indeed, the last two vector equations give us the following system of algebraic equations with respect to the variables $a$ and $b$:
\begin{equation}\label{eq:system-A}
\begin{cases}
a v_1 + b v_2 = f(v_1, v_2) \\
a w_1 + b w_2 = f(w_1, w_2)
\end{cases}
\end{equation}
Clearly, if $v_1 w_2 - v_2 w_1 = 0$, then the vectors $(v_1, v_2)$ and $(w_1, w_2)$ are proportional. Since the right-hand side of the system \eqref{eq:system-A} is structurally homogeneous of degree 1 (i.e., $f(\lambda z_1, \lambda z_2) = \lambda f(z_1, z_2)$), the equations become scalar multiples of each other, and the system \eqref{eq:system-A} has infinitely many solutions.

Else, if $v_1 w_2 - v_2 w_1 \neq 0$, the determinant of the system matrix is non-zero, and the system \eqref{eq:system-A} has a unique solution for $a$ and $b$.

Thus, for any $v, w$ in $\mathfrak{A}$, there exists a  linear $\frac{1}{2}$- derivation $\Phi$ of $\mathfrak{A}$ such that $\Delta(v) = \Phi(v)$ and $\Delta(w) = \Phi(w)$, i.e., $\Delta$ is a  2-local $\frac{1}{2}$- derivation on $\mathfrak{A}$.
\end{proof}

\end{document}